\newcommand{\bydef}{:=}
\newcommand{\wh}[1]{\widehat{#1}}
\newcommand{\id}{\mathrm{id}}%identity map
\DeclareMathOperator*{\ot}{\otimes}%allows placement of subscript below in displaymath
\newcommand{\tr}{\mathrm{tr}}
\newcommand{\cP}{\mathcal{P}}
\newcommand{\cR}{\mathcal{R}}
\newcommand{\cT}{\mathcal{T}}
\newcommand{\cV}{\mathcal{V}}
\newcommand{\frV}{\mathfrak{V}}
\newcommand{\NN}{\mathbb{N}}
\newcommand{\FF}{\mathbb{F}}
\DeclareMathOperator{\Hom}{\mathrm{Hom}}
\DeclareMathOperator{\End}{\mathrm{End}}
\DeclareMathOperator{\Aut}{\mathrm{Aut}}%automorphism group
\DeclareMathOperator{\val}{\mathrm{val}}
\DeclareMathOperator{\Mor}{\mathrm{Mor}}
\DeclareMathOperator{\R}{\mathrm{R}}
\DeclareMathOperator{\skews}{\mathrm{skew}}
\newcommand{\frsl}{{\mathfrak{sl}}}
\newcommand{\frso}{{\mathfrak{so}}}
\newcommand{\frosp}{{\mathfrak{osp}}}
\newcommand{\Ort}{\mathrm{O}}
\newcommand{\SO}{\mathrm{SO}}
\newcommand{\subo}{_{\bar 0}}
\newcommand{\subuno}{_{\bar 1}}
\newcommand{\Cm}{\mathsf{C}}
\newcommand{\GG}{\mathsf{G}}
\newcommand{\mm}{\mathsf{m}}
\newcommand{\VV}{\mathsf{V}} 
\renewcommand{\Hom}{\mathsf{Hom}}
\renewcommand{\Aut}{\mathsf{Aut}}
\renewcommand{\End}{\mathsf{End}}
\renewcommand{\SO}{\mathsf{SO}}
\newcommand{\dimm}{\mathsf{dim\,}}
\renewcommand{\id}{\mathsf{id}}
\newcommand{\II}{\mathsf{I}}
\renewcommand{\tr}{\mathsf{tr}}
\newcommand{\norm}{\mathsf{n}}
\newcommand{\overbar}[1]{\mkern 1.2mu\overline{\mkern-1mu#1\mkern-1mu}\mkern 1mu}
\newtheorem{theorem}[equation]{Theorem}
\newtheorem{proposition}[equation]{Proposition}
\newtheorem{lemma}[equation]{Lemma}
\newtheorem{corollary}[equation]{Corollary}
\theoremstyle{definition}
\newtheorem{df}[equation]{Definition}
\newtheorem{example}[equation]{Example}
\newtheorem{remark}[equation]{Remark}
\numberwithin{equation}{section}
\newenvironment{romanenumerate}
 {\begin{enumerate}
 
 }{\end{enumerate}}
\tikzstyle cross=[preaction={draw=white,-,line width=4pt},thick]
\tikzstyle bolito=[draw,circle,fill=black,minimum size=3pt,inner sep=0pt]
\tikzstyle punto=[draw,circle,fill=black,minimum size=1pt,inner sep=0pt]
\tikzstyle fino=[line width=0.5pt]
\newcommand{\tik}[2]{%
 \begin{tikzpicture}[scale=#1,baseline={([yshift=-3pt]current bounding box.center)},label distance=-3pt] #2 \end{tikzpicture}}
\begin{document}

\title{Cross products, invariants, and centralizers}

\author[Georgia Benkart]{Georgia Benkart$^{\star}$}
\address{Department of Mathematics, University of Wisconsin-Madison, Madison, WI 53706, USA}
\email{benkart@math.wisc.edu}

\author[Alberto Elduque]{Alberto Elduque$^{\star}$}
\thanks{$^{\star}$ Both authors have been supported by the Spanish Ministerio de Econom{\'\i}a y Competitividad and 
Fondo Europeo de Desarrollo Regional (FEDER) MTM 2013-45588-C3-2-P. The second author also acknowledges support 
 by the Diputaci\'on General de Arag\'on -- Fondo Social Europeo (Grupo de Investigaci\'on de \'Algebra). He is also grateful for the hospitality of the Department of Mathematics of the University of Wisconsin-Madison in November 2015. Most of the results of this paper were obtained during that visit.}
\address{Departamento de Matem\'aticas e Instituto Universitario de Matem\'aticas y Aplicaciones,
Universidad de Zaragoza, 50009 Zaragoza, Spain
}
\email{elduque@unizar.es}

%----------------------------------

\subjclass[2010]{Primary 20G05, 17B10}

\keywords{Cross product; invariant map; $3$-tangle; $\GG_2$; Kaplansky superalgebra; centralizer algebra}

\begin{abstract}  An algebra $\mathsf{V}$ with a cross product $\times$ has  dimension 3 or  7.  In this work,  we use 
$3$-tangles to describe, and provide a basis  for, the space of  homomorphisms from $\mathsf{V}^{\otimes n}$ to $\mathsf{V}^{\otimes m}$ that are invariant
under the action of the automorphism group $\mathsf{Aut(V,\times)}$ of $\mathsf{V}$, which is a special orthogonal group when $\mathsf{dim\, V} = 3$,
and a simple algebraic group of type $\mathsf{G}_2$ when $\mathsf{dim \, V} = 7$.   When $m = n$, this gives
a graphical description of the centralizer algebra $\mathsf{End_{Aut(V,\times)}}(\mathsf{V}^{\otimes n})$, and therefore, also 
a graphical realization of the $\mathsf{Aut(V,\times)}$-invariants in $\mathsf{V}^{\otimes 2n}$ equivalent to the 
First Fundamental Theorem of Invariant Theory.   We show how the  $3$-dimensional simple Kaplansky Jordan
superalgebra can be interpreted as a cross product (super)algebra and use $3$-tangles to obtain a 
graphical description of  the centralizers  and invariants of the Kaplansky superalgebra
relative to the action of the special orthosymplectic group.    
\end{abstract}

\dedicatory{Dedicated to Efim Zelmanov on the occasion of his $60$th birthday.}

\maketitle  
%------------------------------------------------------------ 
\section{Introduction}  A cross product algebra $(\VV,\mathsf{b},\times)$  is  a finite-dimensional  vector space $\VV$ over a field $\FF$
(assumed to have characteristic different from 2) with  a nondegenerate symmetric bilinear form $\mathsf{b}$  and a bilinear multiplication $\VV\times \VV\rightarrow \VV$, $(x,y)\mapsto x\times y$,  that satisfies
\begin{align*}
& \mathsf{b}( x\times y,x)=0,\\
& x\times x=0,\\
& \mathsf{b}( x\times y,x\times y)= \mathsf{b}( x,x)\mathsf{b}( y,y)- \mathsf{b}( x,y)\mathsf{b}( y,x), 
\end{align*}
for any $x,y\in \VV$.    
Nonzero cross products exist only if $\dimm_\FF \VV=3$ or $7$ (see \cite{BrownGray}, \cite{Rost04}),
and when $\FF$ is the field of real numbers, the cross product 
is the familiar one from calculus in dimension $3$ if $\mathsf{b}$ is positive definite.
We relate cross product algebras to certain $3$-tangle categories and give a graphical
realization of the invariants and centralizer algebras of tensor powers of $\VV$ under the action of 
its automorphism group $\Aut(\VV, \times)$. 

The \emph{$3$-tangle category}  $\mathcal T$ has as objects the finite sets $[n] = \{1,2,\dots, n\}$ for $n\in\NN = \{0,1,2,\ldots\}$, where 
$[0] = \emptyset$.  For any $n,m\in\NN$,  the morphisms $\Mor_{\cT}([n],[m])$ are $\FF$-linear combinations of 
 $3$-tangles, and they are generated through composition and disjoint union from the basic morphisms (basic $3$-tangles) in \eqref{eq:basic1}  and 
 \eqref{eq:basic2}. This gives a graphical calculus that enables us to describe the space $\Hom_{\Aut(\VV,\times)}(\VV^{\otimes n},\VV^{\otimes m})$
 of $\Aut(\VV,\times)$-homomorphisms on tensor powers of $\VV$.
  When $\dimm_\FF \VV = 7$,  the group  $\Aut(\VV,\times)$ is a simple algebraic group of type $\GG_2$, and $\VV$ is its
 natural $7$-dimensional module (its smallest nontrivial irreducible module).    When $\dimm_\FF \VV = 3$,   
 $\Aut(\VV,\times)$ is the special orthogonal group $\SO(\VV,\mathsf{b})$.   
 
From  the properties of the cross product, we construct three homomorphisms (when $\dimm_\FF \VV = 7$, they are given in 
\eqref{eq:c0}, \eqref{eq:c1_c2}, and when $\dimm_\FF \VV = 3$, in \eqref{eq:c4},\eqref{eq:c3}). 
Applying  methods similar to those in (\cite{Kup2}, \cite{Kup1}, \cite{CKR}), 
we show in each case (see Theorems \ref{th:G2} and \ref{th:sl2} for the precise statements) that these homomorphisms correspond to a set $\Gamma^*$ consisting of  three $3$-tangle relations,  and  the following result holds.  In the statement, the $3$-tangles
 must satisfy some additional constraints.  When $\dimm_\FF\VV=3$, these constraints are 
  incorporated in the definition of ``normalized" $3$-tangles.   
 
 \begin{theorem}\label{th:general} 
Let $n,m\in\NN$ and assume that the characteristic of $\FF$ is $0$.   Let $(\VV,\mathsf{b},\times)$ be a vector space $\VV$ endowed
with a nonzero cross product $x\times y$ relative to the nondegenerate symmetric bilinear form $\mathsf{b}$.
\begin{itemize}
\item[{\rm (a)}]  The classes modulo $\Gamma^*$ of (normalized) $3$-tangles $[n]\rightarrow [m]$ without crossings and without any of the subgraphs in \eqref{eq:avoided} form a basis of $\Mor_{\cT_{\Gamma^*}}([n],[m])$.

\item[{\rm (b)}]   There is a functor  $\cR_{\Gamma^*}$  giving a linear isomorphism 
\[
\Mor_{\cT_{\Gamma^*}}([n],[m])\rightarrow \Hom_{\Aut(\VV,\times)}(\VV^{\otimes n},\VV^{\otimes m}).
\]

\item[{\rm (c)}]  The $3$-tangles $[n]\rightarrow [n]$ as in part \textrm{(a)} give a basis of the centralizer algebra $$\End_{\Aut(\VV,\times)}\bigl(\VV^{\otimes n}\bigr)\simeq \Mor_{\cT_{\Gamma^*}}([n],[n]).$$ 
\end{itemize}
\end{theorem}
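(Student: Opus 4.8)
The plan is to obtain Theorem~\ref{th:general} from the two dimension-specific results, Theorems~\ref{th:G2} and \ref{th:sl2}, which are proved by a common three-step scheme: build a functor $\cR$ from the $3$-tangle category to $\Aut(\VV,\times)$-modules and show it factors through $\cT_{\Gamma^*}$; prove that the induced functor $\cR_{\Gamma^*}$ is surjective on every $\Hom$-space and that the reduced $3$-tangles of part~(a) span $\Mor_{\cT_{\Gamma^*}}([n],[m])$; and match dimensions to conclude both that $\cR_{\Gamma^*}$ is an isomorphism and that those tangles form a basis. Part~(c) is then the special case $m=n$.

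First I would construct $\cR$ on the free $3$-tangle category $\cT$ by sending $[n]\mapsto\VV^{\otimes n}$ and the basic $3$-tangles of \eqref{eq:basic1}--\eqref{eq:basic2} to the $\Aut(\VV,\times)$-equivariant maps assembled from $\mathsf b$, the copairing $\FF\to\VV\otimes\VV$ dual to $\mathsf b$, the cross product $\VV\otimes\VV\to\VV$, and a chosen crossing. Functoriality (compatibility with composition and disjoint union) amounts to the snake identities for $(\mathsf b,\text{copairing})$ and naturality of the crossing, all of which hold for these maps. The essential point is that each of the three relations in $\Gamma^*$ maps to $0$: this is exactly where the cross-product axioms $\mathsf b(x\times y,x)=0$, $x\times x=0$, and the norm identity for $x\times y$ are invoked, and it produces precisely the homomorphism identities \eqref{eq:c0}, \eqref{eq:c1_c2} when $\dimm_\FF\VV=7$ and \eqref{eq:c4}, \eqref{eq:c3} when $\dimm_\FF\VV=3$. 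Thus $\cR$ descends to a functor $\cR_{\Gamma^*}$ from $\cT_{\Gamma^*}$ to the category of $\Aut(\VV,\times)$-modules.

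Surjectivity of $\cR_{\Gamma^*}$ on $\Hom$-spaces is the First Fundamental Theorem: the self-duality of $\VV$ under $\mathsf b$ gives $\Hom_{\Aut(\VV,\times)}(\VV^{\otimes n},\VV^{\otimes m})\cong\bigl(\VV^{\otimes(n+m)}\bigr)^{\Aut(\VV,\times)}$, and for $\SO(\VV,\mathsf b)$ ($\dimm_\FF\VV=3$) this invariant space is spanned by contractions with $\mathsf b$ and the volume $3$-form, while for the group of type $\GG_2$ ($\dimm_\FF\VV=7$) it is spanned by contractions with $\mathsf b$ and the alternating trilinear form $(x,y,z)\mapsto\mathsf b(x\times y,z)$ --- in each case every spanning invariant is manifestly $\cR_{\Gamma^*}$ of a $3$-tangle. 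For the spanning half of part~(a) I would exhibit a terminating reduction on $3$-tangles: the crossing relations remove all crossings, the three relations of $\Gamma^*$ eliminate the subgraphs of \eqref{eq:avoided} (with the normalization required in the $3$-dimensional case), and a planar Euler-characteristic estimate on the disk with $n+m$ marked boundary points bounds the size of a crossing-free tangle having none of the forbidden internal faces, so only finitely many reduced $3$-tangles $[n]\to[m]$ occur and the reduction terminates.

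Injectivity now follows by counting dimensions. Part~(a) gives $\dim\Mor_{\cT_{\Gamma^*}}([n],[m])\le\#\{\text{reduced }3\text{-tangles}\}$, surjectivity gives $\dim\Mor_{\cT_{\Gamma^*}}([n],[m])\ge\dim\Hom_{\Aut(\VV,\times)}(\VV^{\otimes n},\VV^{\otimes m})$, and a direct computation of the right-hand side from the decomposition of $\VV^{\otimes n}$ into irreducibles --- via the representation ring of $\frsl_2$ when $\dimm_\FF\VV=3$ (using $\VV\cong\frsl_2$, $\SO_3\cong\PSL_2$) and of $\GG_2$ when $\dimm_\FF\VV=7$ --- shows it equals the number of reduced tangles; alternatively, one checks directly that the reduced tangles map to linearly independent elements under the natural composition pairing on $\Hom$-spaces. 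Either way all inequalities collapse, so $\cR_{\Gamma^*}$ is an isomorphism and the reduced tangles are a basis; for part~(c), functoriality makes $\cR_{\Gamma^*}$ an algebra isomorphism $\Mor_{\cT_{\Gamma^*}}([n],[n])\to\End_{\Aut(\VV,\times)}(\VV^{\otimes n})$, under which that basis transports to the asserted basis of the centralizer algebra. I expect the main obstacle to be the combinatorial heart of part~(a): showing that the three relations of $\Gamma^*$ alone suffice to rewrite an arbitrary $3$-tangle as a combination of reduced diagrams, and pinning down the set of reduced diagrams precisely enough for the final dimension comparison; the functorial checks and the FFT input are comparatively routine.
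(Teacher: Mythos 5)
Your proposal follows essentially the same route as the paper: construct the functor $\cR_{\frV}$, check that the cross-product identities kill the relations in $\Gamma^*$ so that it descends to $\cT_{\Gamma^*}$, obtain surjectivity from the First Fundamental Theorem, show the reduced diagrams span by eliminating circles, crossings and small internal faces, and close the argument with a dimension count. The one caveat is that the dimension match in the $\GG_2$ case --- that the number of crossing-free $3$-tangles $[n+m]\to[0]$ avoiding the subgraphs in \eqref{eq:avoided} equals $\dim\Hom_{\Aut(\VV,\times)}(\VV^{\otimes(n+m)},\FF)$ --- is not the routine computation in the representation ring that your last paragraph suggests, but a substantial enumeration theorem of Kuperberg and Westbury, which the paper imports by citation rather than reproves.
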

The centralizer algebra $\End_{\Aut(\VV,\times)}\bigl(\VV^{\otimes n}\bigr)$ is isomorphic to the space of $\Aut(\VV,\times)$-invariants
in $\VV^{\otimes 2n}$, and part (c) gives a graphical realization of that space equivalent to the First Fundamental Theorem of Invariant Theory
results in (\cite{Weyl}, \cite{Schwarz}, \cite{ZSh}).

Section \ref{se:pr} establishes basic facts about bilinear forms on tensor powers and Section \ref{se:3tangles} about $3$-tangles.
In Sections \ref{se:7dim} and  \ref{se:3_dim},  we prove the above theorem for dimensions $7$ and $3$ respectively. 
The final section  extends these considerations to the Kaplansky superalgebra.   This $3$-dimensional simple Jordan superalgebra
has a multiplication that can be interpreted as a cross product relative to a nondegenerate supersymmetric bilinear form $\mathsf{b}$.    With modifications to the arguments that take into
account the fact that the cross product is supercommutative,   $x \times y = (-1)^{xy} y \times x$, and so depends on the parity of the homogeneous elements $x,y$, we are able in 
Theorem \ref{th:Kap} to prove an analogue of Theorem \ref{th:general}, where the automorphism group $\Aut(\VV,\times)$ is the special orthosymplectic group 
$\mathsf{SOSp}(\VV,\mathsf{b})$ (or equivalently, the group could be replaced by the orthosymplectic Lie superalgebra $\frosp(\VV,\mathsf{b})$ of derivations on $\VV$). 
This gives a graphical version of the  First Fundamental Theorem of Invariant Theory for $\mathsf{SOSp}(\VV,\mathsf{b})$  (see \cite{Ser92}, \cite{LZ15}).    

In all the $3$-dimensional cases,   the centralizer algebra $\End_{\Aut(\VV,\times)}\bigl(\VV^{\otimes n}\bigr)$ has dimension equal to
the number $a(2n)$ of Catalan partitions  i.e., partitions of $2n$ points on a circle into a nonintersecting union of subsets, each  of size at least 2, whose convex hulls are disjoint (sequence $\#$A099251 in \cite{OEIS}).    

The cross product and certain properties that  it satisfies are key to deriving the $3$-tangle results in this paper. 
This approach differs from that of Kuperberg \cite{Kup2},  which derives quantum $\mathsf{G}_2$ link invariants
from the Jones polynomial starting from its simplest formulation in terms of the Kauffman bracket.  Kuperberg's quantum $\mathsf{G}_2$
invariant  is defined not only  
on knots and links, but also on regular isotopy classes of certain graphs with endpoints and all other nodes
trivalent, akin to the $3$-tangles here,  and is a particular case of the Reshetikhin-Turaev invariant of ribbon graphs.

%-------------------------------------
%     PRELIMINARIES
%-------------------------------------

\section{Preliminaries}\label{se:pr}

Let $\VV$ be a finite-dimensional nonzero vector space over a field $\FF$ endowed with a nondegenerate bilinear form $\mathsf{b}:\VV\times \VV\rightarrow \FF$, which we will identify with the linear map $\mathsf{b}:\VV^{\otimes 2}\rightarrow \FF$
given by $\mathsf{b}( u \ot v) = \mathsf{b}( u,v)$. For $n\in \NN=\{0,1,2,\ldots\}$, consider the linear map
\[
\begin{split}
\mathsf{b}_n:\VV^{\otimes 2n}&\longrightarrow \FF\\
v_1\otimes\cdots\otimes v_{2n}&\mapsto \prod_{i=1}^n \mathsf{b}( v_i,v_{2n+1-i}),
\end{split}
\]
with $\mathsf{b}_0=\id$ (the identity map on $\VV^{\otimes 0}\bydef \FF$). For $n\geq 1$, $\mathsf{b}_n$ will be represented graphically as follows:
\[
\tik{0.7}{%
\draw[very thin] (-1,0) -- (8,0);
\node[bolito] (a1) at (0,0) [label=above: {\tiny $1$}] {};
\node[bolito] (a2) at (1,0) [label=above: {\tiny $2$}] {};
\node[bolito] (a3) at (2,0) [label=above: {\tiny $3$}] {};
\node[bolito] (a4) at (5,0) [label=above: {\tiny $2n$--$2$}] {};
\node[bolito] (a5) at (6,0) [label=above: {\tiny $2n$--$1$}] {};
\node[bolito] (a6) at (7,0) [label=above: {\tiny $2n$}] {};
\node at (3.5,0.3) {\tiny $\cdots\cdots$};
\node at (3.5,-0.3) {$\cdots\cdots$};
\draw[thick] (a1) to [bend right=80] (a6);
\draw[thick] (a2) to [bend right=60] (a5);
\draw[thick] (a3) to [bend right=50] (a4);}
\]
where no crossings appear. (This will be important later on.)

Also, $\mathsf{b}_n$ will be identified with the nondegenerate bilinear form \\
$\VV^{\otimes n}\times \VV^{\otimes n}\rightarrow \FF$ such that
\[
\mathsf{b}_n(v_1\otimes \cdots\otimes v_n,w_1\otimes\cdots w_n)=\prod_{i=1}^n \mathsf{b}( v_i,w_{n+1-i}),
\]
for any $v_1,\ldots,v_n,w_1,\ldots,w_n\in \VV$. If $\mathsf{b}$ is symmetric, so is $\mathsf{b}_n$ for any $n\ge 1$.

\begin{df}\label{df:transpose}
Given $n,m\in\NN$ and a linear map $f\in\Hom_\FF(\VV^{\otimes n},\VV^{\otimes m})$, the \emph{transpose} of $f$ is the linear map $f^t\in\Hom_\FF(\VV^{\otimes m},\VV^{\otimes n})$ such that
\[
\mathsf{b}_m\bigl(f(x),y\bigr)=\mathsf{b}_n\bigl(x,f^t(y)\bigr),
\]
for any $x\in \VV^{\otimes n}$ and $y\in \VV^{\otimes m}$.
\end{df}
 
In other words, 
\begin{equation}\label{eq:bmfbnft}
\mathsf{b}_m\circ (f\otimes \id_m)=\mathsf{b}_n\circ(\id_n\otimes f^t)
\end{equation}
in $\Hom_\FF(\VV^{\otimes (n+m)},\FF)$, where $\id_n$ denotes the identity map on $\VV^{\otimes n}$.
 
Equation \eqref{eq:bmfbnft} will be represented graphically as follows:   
\[
\tik{0.5}{%
\draw[very thin] (-1,0) -- (8,0);
\draw[very thin] (-1,-4) --(8,-4);
\node[bolito] (a1) at (0,0) [label=above: \raisebox{1pt}{\tiny $1$}] {};
\node[bolito] (a2) at (1,0) [label=above: \raisebox{1pt}{\tiny $2$}] {};
\node[bolito] (a3) at (3,0) [label=above: \raisebox{1pt}{\tiny $n$}] {};
\node[bolito] (a4) at (4,0) [label=above: {\tiny $n$+$1$}] {};
\node[bolito] (a5) at (6,0)  {};
\node[bolito] (a6) at (7,0) [label=above: {\tiny $n$+$m$}] {};
\node at (2,0.5) {\tiny $\cdots$};
\node at (5.5,0.5) {\tiny $\cdots$};
\node at (2,-0.7) {\tiny $\cdots$};
\node at (2.1,-3.3) {\tiny $\cdots$};
\node at (5,-2) {\tiny $\cdots$};
\node[bolito] (b1) at (0.5,-4) {};
\node[bolito] (b2) at (1.4,-4) {};
\node[bolito] (b3) at (2.8,-4) {};
\node[bolito] (b4) at (4,-4) {};
\node[bolito] (b5) at (6,-4) {};
\node[bolito] (b6) at (7,-4) {};
\draw (-0.5,-2.7) rectangle (3.5,-1.3);
\node at (1.5,-2) {$f$};
\draw[thick] (a1) to (0,-1.3);
\draw[thick] (a2) to (1,-1.3);
\draw[thick] (a3) to (3,-1.3);
\draw[thick] (b1) to (0.5,-2.7);
\draw[thick] (b2) to (1.4,-2.7);
\draw[thick] (b3) to (2.8,-2.7);
\draw[thick] (a4) to (b4);
\draw[thick] (a5) to (b5);
\draw[thick] (a6) to (b6);
\draw[thick] (b1) to [bend right=80] (b6);
\draw[thick] (b2) to [bend right=60] (b5);
\draw[thick] (b3) to [bend right=50] (b4);
\node at (3.5,-4.7) {\tiny $\cdots$};
}   
\quad
=
\quad
\tik{0.5}{%
\draw[very thin] (-1,0) -- (8,0);
\draw[very thin] (-1,-4) --(8,-4);
\node[bolito] (a1) at (0,0) [label=above: \raisebox{1pt}{\tiny $1$}] {};
\node[bolito] (a2) at (1,0) [label=above: \raisebox{1pt}{\tiny $2$}] {};
\node[bolito] (a3) at (3,0) [label=above: \raisebox{1pt}{\tiny $n$}] {};
\node[bolito] (a4) at (4,0) [label=above: {\tiny $n$+$1$}] {};
\node[bolito] (a5) at (6,0)  {};
\node[bolito] (a6) at (7,0) [label=above: {\tiny $n$+$m$}] {};
\node at (2,0.5) {\tiny $\cdots$};
\node at (5.2,0.5) {\tiny $\cdots$};
\node at (5,-0.8) {\tiny $\cdots$};
\node at (5,-3.3) {\tiny $\cdots$};
\node at (2,-2) {\tiny $\cdots$};     
\node[bolito] (b1) at (0,-4) {};
\node[bolito] (b2) at (1,-4) {};
\node[bolito] (b3) at (3,-4) {};
\node[bolito] (b4) at (4,-4) {};
\node[bolito] (b5) at (6,-4) {};
\node[bolito] (b6) at (7,-4) {};
\draw (3.6,-2.7) rectangle (7.5,-1.3);
\node at (5.5,-2) {$f^t$};
%  COMMENTED OUT
%\draw[thick] (a4) to ++(0,-1.3);
%\draw[thick] (a5) to ++(0,-1.3);
%\draw[thick] (a6) to ++(0,-1.3);
\draw[thick] (a4) to (4,-1.3);
\draw[thick] (a5) to (6,-1.3);
\draw[thick] (a6) to (7,-1.3);
%
%\draw[thick] (b4) to ++(0,1.3);
%\draw[thick] (b5) to ++(0,1.3);
%\draw[thick] (b6) to ++(0,1.3);
\draw[thick] (b4) to (4,-2.7);
\draw[thick] (b5) to (6,-2.7);
\draw[thick] (b6) to (7,-2.7);
\draw[thick] (a1) to (b1);
\draw[thick] (a2) to (b2);
\draw[thick] (a3) to (b3);
\draw[thick] (b1) to [bend right=80] (b6);
\draw[thick] (b2) to [bend right=60] (b5);
\draw[thick] (b3) to [bend right=50] (b4);
\node at (3.5,-4.7) {\tiny $\cdots$};
}
\]      
Given linear maps $f\in\Hom_\FF(\VV^{\otimes n},\VV^{\otimes q})$, $g\in\Hom_\FF(\VV^{\otimes m},\VV^{\otimes p})$, the linear map $f\otimes g\in \Hom_\FF(\VV^{\otimes (n+m)},\VV^{\otimes(q+p)})$ will be represented as follows:
\[
\tik{0.5}{%
\draw[very thin] (-1,0) -- (8,0);
\draw[very thin] (-1,-4) --(8,-4);
\node[bolito] (a1) at (0,0) [label=above: \raisebox{1pt}{\tiny $1$}] {};
\node[bolito] (a2) at (1,0) [label=above: \raisebox{1pt}{\tiny $2$}] {};
\node[bolito] (a3) at (3,0) [label=above: \raisebox{1pt}{\tiny $n$}] {};
\node[bolito] (a4) at (4,0) [label=above: {\tiny $n$+$1$}] {};
\node[bolito] (a5) at (7,0) [label=above: {\tiny $n$+$m$}] {};
\node at (2,0.5) {\tiny $\cdots$};
\node at (5.5,0.5) {\tiny $\cdots$};
\node at (2,-0.8) {\tiny $\cdots$};
\node at (5.5,-0.8) {\tiny $\cdots$};
\node[bolito] (b1) at (0.5,-4) {};
\node[bolito] (b2) at (2.5,-4) {};
\node[bolito] (b3) at (4.5,-4) {};
\node[bolito] (b4) at (6.5,-4) {};
\node at (1.5,-3.3) {\tiny $\cdots$};
\node at (5.5,-3.3) {\tiny $\cdots$};
\draw (-0.3,-2.7) rectangle (3.3,-1.3);
\node at (1.5,-2) {$f$};
\draw (3.7,-2.7) rectangle (7.3,-1.3);
\node at (5.5,-2) {$g$};
%  COMMENTED OUT
%\draw[thick] (a1) to ++(0,-1.3);
%\draw[thick] (a2) to ++(0,-1.3);
%\draw[thick] (a3) to ++(0,-1.3);
%\draw[thick] (a4) to ++(0,-1.3);
%\draw[thick] (a5) to ++(0,-1.3);
\draw[thick] (a1) to (0,-1.3);
\draw[thick] (a2) to (1,-1.3);
\draw[thick] (a3) to (3,-1.3);
\draw[thick] (a4) to (4,-1.3);
\draw[thick] (a5) to (7,-1.3);
%
%\draw[thick] (b1) to ++(0,1.3);
%\draw[thick] (b2) to ++(0,1.3);
%\draw[thick] (b3) to ++(0,1.3);
%\draw[thick] (b4) to ++(0,1.3);
\draw[thick] (b1) to (0.5,-2.7);
\draw[thick] (b2) to (2.5,-2.7);
\draw[thick] (b3) to (4.5,-2.7);
\draw[thick] (b4) to (6.5,-2.7);
}
\] 
The next result is clear.

\begin{proposition}
\null\qquad
\begin{itemize}
\item $(\id_n)^t=\id_n$ for any $n\in\NN$.
\item $(f\circ g)^t=g^t\circ f^t$, for any $n,m,p\in\NN$, $f\in\Hom_\FF(\VV^{\otimes m},\VV^{\otimes p})$, and $g\in\Hom_\FF(\VV^{\otimes n},\VV^{\otimes m})$.
\item $(f\otimes g)^t=g^t\otimes f^t$ for any $n,m,p,q\in\NN$, $f\in\Hom_\FF(\VV^{\otimes n},\VV^{\otimes q})$, and $g\in\Hom_\FF(\VV^{\otimes m},\VV^{\otimes p})$.
\item If $\mathsf{b}$ is symmetric, $(f^t)^t=f$ for any $n\in \NN$ and $f\in\Hom_\FF(\VV^{\otimes n},\VV^{\otimes m})$.
\end{itemize}
\end{proposition}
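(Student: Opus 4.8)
The plan is to read all four identities off the defining relation $\mathsf{b}_m\bigl(f(x),y\bigr)=\mathsf{b}_n\bigl(x,f^t(y)\bigr)$ of Definition~\ref{df:transpose}, using only that each form $\mathsf{b}_n$ is nondegenerate. (This is immediate from the nondegeneracy of $\mathsf{b}$: in a basis with Gram matrix $B$, after reversing the order of the factors in the second slot the Gram matrix of $\mathsf{b}_n$ becomes the Kronecker power $B^{\otimes n}$, which is invertible precisely when $B$ is.) Nondegeneracy of $\mathsf{b}_n$ forces a linear map $h\colon\VV^{\otimes m}\to\VV^{\otimes n}$ satisfying $\mathsf{b}_m\bigl(f(x),y\bigr)=\mathsf{b}_n\bigl(x,h(y)\bigr)$ for all $x,y$ to be unique, so in every case it suffices to check that the claimed map plays the role of $f^t$.

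The first three bullets are then routine. For $(\id_n)^t=\id_n$, the identity $\mathsf{b}_n\bigl(\id_n(x),y\bigr)=\mathsf{b}_n(x,y)$ already has the required form. For $(f\circ g)^t=g^t\circ f^t$ with $g\colon\VV^{\otimes n}\to\VV^{\otimes m}$ and $f\colon\VV^{\otimes m}\to\VV^{\otimes p}$, chaining the defining relations gives $\mathsf{b}_p\bigl((f\circ g)(x),y\bigr)=\mathsf{b}_m\bigl(g(x),f^t(y)\bigr)=\mathsf{b}_n\bigl(x,(g^t\circ f^t)(y)\bigr)$, and uniqueness finishes it. For $(f^t)^t=f$ when $\mathsf{b}$, hence each $\mathsf{b}_n$, is symmetric, one applies the defining relation of $(f^t)^t$, uses symmetry of $\mathsf{b}_n$ and $\mathsf{b}_m$ to swap both arguments, compares with the defining relation of $f^t$, and again invokes uniqueness.

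The only bullet that needs care is $(f\otimes g)^t=g^t\otimes f^t$, and I expect the index bookkeeping there to be the main (though not deep) obstacle, because the order-reversal built into $\mathsf{b}_n$ is exactly what forces $f^t$ and $g^t$ to be \emph{swapped}. Let $f\colon\VV^{\otimes n}\to\VV^{\otimes q}$ and $g\colon\VV^{\otimes m}\to\VV^{\otimes p}$. I would write a general element of $\VV^{\otimes(n+m)}$ as $x'\otimes x''$ with $x'\in\VV^{\otimes n}$ and $x''\in\VV^{\otimes m}$, and a general element of $\VV^{\otimes(q+p)}$ as $y'\otimes y''$ with $y'\in\VV^{\otimes p}$ and $y''\in\VV^{\otimes q}$ --- the asymmetry of this second grouping is the whole point. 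Taking $f(x')$ and $g(x'')$ to be simple tensors (enough by bilinearity) and expanding $\mathsf{b}_{q+p}\bigl((f\otimes g)(x'\otimes x''),\,y'\otimes y''\bigr)$ via the product formula, the fact that $\mathsf{b}_{q+p}$ pairs the $2(q+p)$ slots from the outside inward causes it to factor as $\mathsf{b}_q\bigl(f(x'),y''\bigr)\,\mathsf{b}_p\bigl(g(x''),y'\bigr)$; applying the defining relations of $f^t$ and $g^t$ turns this into $\mathsf{b}_n\bigl(x',f^t(y'')\bigr)\,\mathsf{b}_m\bigl(x'',g^t(y')\bigr)$. The same outside-in expansion of $\mathsf{b}_{n+m}\bigl(x'\otimes x'',\,g^t(y')\otimes f^t(y'')\bigr)$ reassembles precisely this product, so $g^t\otimes f^t$ satisfies the defining relation of $(f\otimes g)^t$, and uniqueness yields the claim. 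Graphically all four statements are transparent: transposing reflects a diagram about a horizontal axis, which reverses composition, preserves a side-by-side juxtaposition while interchanging left and right, fixes the nested ``cup'' diagram of $\mathsf{b}_n$, and is an involution exactly when $\mathsf{b}$ is symmetric; the computations above are merely the algebraic shadow of these reflections.
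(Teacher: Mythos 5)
Your argument is correct, and the paper itself offers no proof (it simply states ``The next result is clear''), so your write-up is exactly the routine verification the authors have in mind: uniqueness of the transpose from nondegeneracy of $\mathsf{b}_n$, plus a direct check of the defining relation in each case. Your careful treatment of the third bullet is right --- the outside-in pairing of $\mathsf{b}_{q+p}$ matches the first $q$ slots with the last $q$ and the last $p$ with the first $p$, which is precisely what forces the factors of the transpose to swap.
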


Assume  $\dimm_\FF \VV=d$,  and let $\{u_i\}_{i=1}^d$ and $\{v_i\}_{i=1}^d$ be dual bases relative to $\mathsf{b}$ so that $\mathsf{b}( u_i,v_j)=\delta_{i,j}$ ($1$ if $i=j$ and $0$ otherwise).   

For any $n\in \NN$, let $[d]^n\bydef \{1,\ldots,d\}^n$ ($[d]^0=\{\emptyset\}$), and for any $\alpha=\bigl(\alpha_1,\ldots,\alpha_n\bigr)\in [d]^n$, let $\overbar\alpha\bydef \bigl(\alpha_n,\ldots,\alpha_1\bigr)$, $u_\alpha\bydef u_{\alpha_1}\otimes\cdots\otimes u_{\alpha_n}$, and $v_\alpha\bydef v_{\alpha_1}\otimes\cdots\otimes v_{\alpha_n}$. For $n=0$, set $u_\emptyset=1=v_\emptyset$ and $\overbar\emptyset=\emptyset$. Then $\{u_\alpha\}_{\alpha\in [d]^n}$ and $\{v_{\overbar\alpha}\}_{\alpha\in [d]^n}$ are dual bases of $\VV^{\otimes n}$ relative to $\mathsf{b}_n$. Hence, for any $x\in \VV^{\otimes n}$,
\[
x=\sum_{\alpha\in [d]^n}\mathsf{b}_n(u_\alpha\otimes x)v_{\overbar\alpha}
=\sum_{\alpha\in [d]^n} \mathsf{b}_n(x\otimes v_{\overbar \alpha})u_\alpha.
\]

\noindent For any $n,m\in\NN$, $f\in\Hom_\FF(\VV^{\otimes n},\VV^{\otimes m})$, and $\alpha\in [d]^n$, $f(u_\alpha)=\sum_{\beta\in [d]^m}a_{\alpha,\beta}u_\beta$, with $a_{\alpha,\beta}\in \FF$ for any $\beta$. Then, for any $\beta\in [d]^m$:
\begin{equation}\label{eq:ftv}
f^t(v_{\overbar\beta})=\sum_{\alpha\in [d]^n}a_{\alpha,\beta}v_{\overbar\alpha},
\end{equation}
because
\[
a_{\alpha,\beta}=\mathsf{b}_m\bigl(f(u_\alpha)\otimes v_{\overbar\beta}\bigr)
=\mathsf{b}_n\bigl(u_\alpha\otimes f^t(v_{\overbar\beta})\bigr).
\]

\begin{proposition}\label{pr:tPhiPsi} 
\null\qquad
\begin{romanenumerate}
\item For any $n\in\NN$, $\mathsf{b}_n^t(1)=\sum_{\alpha\in [d]^n}v_{\overbar\alpha}\otimes u_\alpha$.
\pagebreak[2]
\item For any $n,m\in\NN$, the linear map
\[
\begin{split}
\Phi_{n,m}:\Hom_\FF(\VV^{\otimes n},\VV^{\otimes m})&\longrightarrow \Hom_\FF(\VV^{\otimes(n+m)},\FF)\\
 f\quad&\mapsto\quad \mathsf{b}_m\circ(f\otimes \id_m),
\end{split}
\]
is a linear isomorphism with inverse given by
\[
\begin{split}
\Psi_{n,m}:\Hom_\FF(\VV^{\otimes(n+m)},\FF)&\longrightarrow \Hom_\FF(\VV^{\otimes n},\VV^{\otimes m})\\
h\quad&\mapsto\quad (h\otimes \id_m)\circ(\id_n\otimes \mathsf{b}_m^t).
\end{split}
\]

\item For any $n,m\in\NN$ and $f\in \Hom_\FF(\VV^{\otimes n},\VV^{\otimes m})$,
\[
f^t=(\id_n\otimes \mathsf{b}_m)\circ(\id_n\otimes f\otimes \id_m)\circ (\mathsf{b}_n^t\otimes \id_m).
\]
\end{romanenumerate}
\end{proposition}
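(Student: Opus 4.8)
The plan is to verify each of the three items by reducing everything to the definitions of transpose, of $\mathsf{b}_n$, and of the dual bases $\{u_\alpha\}$, $\{v_{\overbar\alpha}\}$, and then checking the asserted identities on basis elements.

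\textbf{Item (i).} First I would apply the definition of transpose to $\mathsf{b}_n\colon \VV^{\otimes 2n}\to\FF$, viewed as an element of $\Hom_\FF(\VV^{\otimes 2n},\VV^{\otimes 0})$. Its transpose $\mathsf{b}_n^t$ lies in $\Hom_\FF(\FF,\VV^{\otimes 2n})$, so it is determined by $\mathsf{b}_n^t(1)$. Using the characterization \eqref{eq:ftv} with $m=0$ (so the only $\beta$ is $\emptyset$ and $v_{\overbar\emptyset}=1$), together with the coefficients $a_{\alpha,\emptyset}=\mathsf{b}_n(u_\alpha)=\mathsf{b}_n(u_\alpha\otimes 1)$, one reads off $\mathsf{b}_n^t(1)=\sum_{\alpha}a_{\alpha,\emptyset}\,v_{\overbar\alpha}$. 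The subtlety is the index conventions: here $\mathsf{b}_n$ is the bilinear map on $\VV^{\otimes 2n}$, so one should write $u_\alpha$ for $\alpha\in[d]^n$ paired against $u_\alpha$ again, expand $\mathsf{b}_n^t(1)=\sum_{\alpha,\beta\in[d]^n}c_{\alpha,\beta}\,v_{\overbar\alpha}\otimes v_{\overbar\beta}$, and pair with $u_\gamma\otimes u_\delta$ to get $c_{\gamma,\delta}=\mathsf{b}_{2n}\bigl(\mathsf{b}_n^t(1)\otimes(u_\gamma\otimes u_\delta)\bigr)=\mathsf{b}_{2n}\bigl(1\otimes\mathsf{b}_n(u_\gamma\otimes u_\delta)\bigr)=\mathsf{b}_n(u_\gamma\otimes u_\delta)$. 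The factored form of $\mathsf{b}_n$ shows $\mathsf{b}_n(u_\gamma\otimes u_\delta)=\prod_i\mathsf{b}(u_{\gamma_i},u_{\delta_{n+1-i}})$, which by duality is $1$ exactly when $u_{\delta_{n+1-i}}=v_{\gamma_i}$ for all $i$, i.e. $\delta=\overbar\gamma$ — but since the $u$'s are one fixed basis, more precisely one should run $v_{\overbar\alpha}\otimes u_\alpha$ as in the statement and pair against $u_\gamma\otimes v_{\overbar\delta}$, which collapses to $\delta=\gamma$. This bookkeeping is the only mildly fiddly part; once the pairing conventions are pinned down it is immediate.

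\textbf{Item (ii).} I would show $\Psi_{n,m}\circ\Phi_{n,m}=\id$ directly; surjectivity/injectivity of $\Phi_{n,m}$ then follows since both spaces have the same finite dimension $d^{n+m}$. For $f\in\Hom_\FF(\VV^{\otimes n},\VV^{\otimes m})$, compute
\[
\Psi_{n,m}\bigl(\Phi_{n,m}(f)\bigr)=\bigl(\mathsf{b}_m\circ(f\otimes\id_m)\otimes\id_m\bigr)\circ(\id_n\otimes\mathsf{b}_m^t),
\]
and evaluate on $u_\alpha\in\VV^{\otimes n}$. Writing $\mathsf{b}_m^t(1)=\sum_{\beta\in[d]^m}v_{\overbar\beta}\otimes u_\beta$ from item (i), one gets $\sum_\beta\mathsf{b}_m\bigl(f(u_\alpha)\otimes v_{\overbar\beta}\bigr)u_\beta=\sum_\beta a_{\alpha,\beta}u_\beta=f(u_\alpha)$, using the expansion formula $x=\sum_\beta\mathsf{b}_m(x\otimes v_{\overbar\beta})u_\beta$ established just before Proposition \ref{pr:tPhiPsi}. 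This is a one-line check once item (i) is in hand; alternatively the same computation is exactly the graphical ``straightening'' move, sliding the cap $\mathsf{b}_m^t$ past the cup $\mathsf{b}_m$, and I would point the reader to that picture.

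\textbf{Item (iii).} The cleanest route is to combine (i) and (ii): the asserted expression is $\Psi_{n,m}\bigl(\mathsf{b}_n\circ(\id_n\otimes f)\bigr)$-type, but most directly I would simply apply both sides to a basis element $v_{\overbar\beta}\in\VV^{\otimes m}$ and match coefficients against $\{v_{\overbar\alpha}\}_{\alpha\in[d]^n}$. The right-hand side sends $v_{\overbar\beta}$ to $(\id_n\otimes\mathsf{b}_m)\bigl((\id_n\otimes f\otimes\id_m)(\mathsf{b}_n^t(1)\otimes v_{\overbar\beta})\bigr)$; inserting $\mathsf{b}_n^t(1)=\sum_\alpha v_{\overbar\alpha}\otimes u_\alpha$ gives $\sum_\alpha v_{\overbar\alpha}\,\mathsf{b}_m\bigl(f(u_\alpha)\otimes v_{\overbar\beta}\bigr)=\sum_\alpha a_{\alpha,\beta}\,v_{\overbar\alpha}$, which by \eqref{eq:ftv} equals $f^t(v_{\overbar\beta})$. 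The main obstacle throughout is not conceptual but notational: keeping the reversal $\alpha\mapsto\overbar\alpha$ and the ``nested'' pairing in $\mathsf{b}_n$ consistent, and making sure the tensor-factor indices in expressions like $\id_n\otimes f\otimes\id_m$ line up with the legs of $\mathsf{b}_n^t$ and $\mathsf{b}_m$. Since the paper has already set up the graphical calculus, I would present (ii) and (iii) as the algebraic content of the evident isotopies (capping off and straightening zig-zags) and relegate the index-chase to a sentence.
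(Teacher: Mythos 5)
Your proposal is correct and follows essentially the same route as the paper's proof: every item is verified directly from the definition of the transpose using the dual bases $\{u_\alpha\}$ and $\{v_{\overbar\alpha}\}$, your computations for (ii) and (iii) are the paper's computations evaluated on basis vectors, and your dimension-count shortcut in (ii) is legitimate since both spaces have dimension $d^{\,n+m}$ (the paper instead checks both composites explicitly). One small repair is needed in your bookkeeping for (i): the coefficients of $\mathsf{b}_n^t(1)$ in the basis $\{v_{\overbar\alpha}\}_{\alpha\in[d]^{2n}}$ are the Gram-matrix products $\mathsf{b}_n(u_\gamma\otimes u_\delta)$, which do not collapse to Kronecker deltas under either pairing you describe; the stated formula follows by resumming these via $u_\delta=\sum_\gamma \mathsf{b}_n(u_\gamma\otimes u_\delta)\,v_{\overbar\gamma}$, which is in effect what the paper does when it verifies $\mathsf{b}_n(x\otimes y)=\mathsf{b}_{2n}\bigl(x\otimes y\otimes \sum_{\alpha}v_{\overbar\alpha}\otimes u_\alpha\bigr)$ directly for arbitrary $x,y\in\VV^{\otimes n}$.
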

\begin{proof}
For (i),  we must prove that $\mathsf{b}_n(x\otimes y)
=\mathsf{b}_{2n}(x\otimes y\otimes \mathsf{b}_n^t(1))$ for any $x,y\in \VV^{\otimes n}$.  But
\[
\begin{split}
\mathsf{b}_n(x\otimes y)&=\mathsf{b}_n\Bigl(x\otimes 
\Big(\sum_{\alpha\in [d]^n}\mathsf{b}_n(y\otimes v_{\overbar\alpha})u_\alpha\Big)\Bigr)\\
   &=\sum_{\alpha\in [d]^n}\mathsf{b}_n(x\otimes u_\alpha)\, \mathsf{b}_n(y\otimes v_{\overbar\alpha})\\
   &=\mathsf{b}_{2n}\Bigl(x\otimes y\otimes\Big(\sum_{\alpha\in [d]^n} v_{\overbar\alpha}\otimes u_\alpha\Big)\Bigr).
\end{split}
\]

(ii) Now, for $n,m\in\NN$ and $f\in \Hom_\FF(\VV^{\otimes n},\VV^{\otimes m})$,
\[
\Bigl(\bigl(\mathsf{b}_m\circ(f\otimes\id_m)\bigr)\otimes \id_m\Bigr)\circ(\id_n\otimes \mathsf{b}_m^t)
  =(\mathsf{b}_m\otimes \id_m)\circ(f\otimes \mathsf{b}_m^t),
\]
and for any $x\in \VV^{\otimes n}$,
\[
x \xmapsto{f\otimes \mathsf{b}_m^t} \sum_{\alpha\in [d]^m}f(x)\otimes v_{\overbar\alpha}\otimes u_\alpha
 \xmapsto{\mathsf{b}_m\otimes \id_m} \sum_{\alpha\in [d]^m}\mathsf{b}_m\bigl(f(x)\otimes v_{\bar\alpha}\bigr)u_\alpha
   =f(x).
\]
Also, for $h\in \Hom_\FF(\VV^{\otimes (n+m)},\FF)$,
\[
\mathsf{b}_m\circ\Bigl(\bigl((h\otimes\id_m)\circ(\id_n\otimes \mathsf{b}_m^t)\bigr)\otimes\id_m\Bigr)
  =\mathsf{b}_m\circ(h\otimes\id_m\otimes\id_m)\circ(\id_n\otimes \mathsf{b}_m^t\otimes\id_m),
\]
and for $x\in \VV^{\otimes n}$ and $y\in \VV^{\otimes m}$,
\begin{multline*}
x\otimes y\xmapsto{\id_n\otimes \mathsf{b}_m^t\otimes\id_m}
  \sum_{\alpha\in [d]^n} x\otimes v_{\overbar\alpha}\otimes u_\alpha\otimes y
  \xmapsto{h\otimes\id_m\otimes\id_m}
  \sum_{\alpha\in[d]^m}h(x\otimes v_{\overbar\alpha})u_\alpha\otimes y \\
  \xmapsto{\mathsf{b}_m}\sum_{\alpha\in [d]^n}h(x\otimes v_{\overbar\alpha})\mathsf{b}_m(u_\alpha\otimes y)
  =\sum_{\alpha\in [d]^m}h(x\otimes \mathsf{b}_m(u_\alpha\otimes y)v_{\bar\alpha})=h(x\otimes y).
\end{multline*}

(iii) Finally, for $n,m\in\NN$, $f\in\Hom_\FF(\VV^{\otimes n},\VV^{\otimes m})$, and $x\in \VV^{\otimes m}$,
\begin{multline*}
x\xmapsto{\mathsf{b}_m^t\otimes \id_m}\sum_{\alpha\in [d]^n}v_{\overbar\alpha}\otimes u_\alpha\otimes x
 \xmapsto{\id_n\otimes f\otimes\id_m}\sum_{\alpha\in [d]^n}v_{\overbar\alpha}\otimes f(u_\alpha)\otimes x\\
 \xmapsto{\id_n\otimes \mathsf{b}_m}\sum_{\alpha\in [d]^n}\mathsf{b}_m\bigl(f(u_\alpha)\otimes x)v_{\overbar\alpha}
 =\sum_{\alpha\in [d]^n}\mathsf{b}_n\bigl(u_\alpha\otimes f^t(x)\bigr)v_{\overbar\alpha}=f^t(x).
\end{multline*}
\end{proof}

The map $\mathsf{b}_n^t:\VV^{\otimes 0}=\FF\mapsto \VV^{\otimes 2n}$ will be represented as follows:
\[
\tik{0.7}{%
\draw[very thin] (-1,0) -- (8,0);
\node[bolito] (a1) at (0,0) [label=below: {\tiny $1$}] {};
\node[bolito] (a2) at (1,0) [label=below: {\tiny $2$}] {};
\node[bolito] (a3) at (2,0) [label=below: {\tiny $3$}] {};
\node[bolito] (a4) at (5,0) [label=below: {\tiny $2n$--$2$}] {};
\node[bolito] (a5) at (6,0) [label=below: {\tiny $2n$--$1$}] {};
\node[bolito] (a6) at (7,0) [label=below: {\tiny $2n$}] {};
\node at (3.5,0.3) {\tiny $\cdots\cdots$};
\node at (3.5,-0.3) {$\cdots\cdots$};
\draw[thick] (a1) to [bend left=80] (a6);
\draw[thick] (a2) to [bend left=60] (a5);
\draw[thick] (a3) to [bend left=50] (a4);}
\]
Then $\Phi_{n,m}$ corresponds to:
\begin{equation}\label{eq:Phinm}
\tik{0.5}{%
\draw[very thin] (-1,0) -- (4,0);
\draw[very thin] (-1,-4) --(4,-4);
\node[bolito] (a1) at (0,0) [label=above: {\tiny $1$}] {};
\node[bolito] (a2) at (1,0) [label=above: {\tiny $2$}] {};
\node[bolito] (a3) at (3,0) [label=above: {\tiny $n$}] {};
\node at (2,0.5) {\tiny $\cdots$};
\node at (2,-0.8) {\tiny $\cdots$};
\node at (1.5,-3.2) {\tiny $\cdots$};
\node at (1.5,-4.6) {\tiny $\cdots$};
\node[bolito] (b1) at (0.3,-4) [label=below: {\tiny $1$}] {};
\node[bolito] (b2) at (2.7,-4) [label=below: {{\vrule width 0pt height 4pt}\tiny $m$}] {};
% COMMENTED OUT
\draw (-0.5,-2.7) rectangle (3.5,-1.3);
\node at (1.5,-2) {$f$};
%
%\draw[thick] (a1) to ++(0,-1.3);
%\draw[thick] (a2) to ++(0,-1.3);
%\draw[thick] (a3) to ++(0,-1.3);
\draw[thick] (a1) to (0,-1.3);
\draw[thick] (a2) to (1,-1.3);
\draw[thick] (a3) to (3,-1.3);
%
%\draw[thick] (b1) to ++(0,1.3);
%\draw[thick] (b2) to ++(0,1.3);
\draw[thick] (b1) to (0.3,-2.7);
\draw[thick] (b2) to (2.7,-2.7);
} 
\quad
\xrightarrow{\Phi_{n,m}}
\quad
\tik{0.5}{%
\draw[very thin] (-1,0) -- (8,0);
\draw[very thin] (-1,-4) --(8,-4);
\node[bolito] (a1) at (0,0) [label=above: \raisebox{1pt}{\tiny $1$}] {};
\node[bolito] (a2) at (1,0) [label=above: \raisebox{1pt}{\tiny $2$}] {};
\node[bolito] (a3) at (3,0) [label=above: \raisebox{1pt}{\tiny $n$}] {};
\node[bolito] (a4) at (4,0) [label=above: {\tiny $n$+$1$}] {};
\node[bolito] (a6) at (7,0) [label=above: {\tiny $n$+$m$}] {};
\node at (2,0.5) {\tiny $\cdots$};
\node at (5.5,0.5) {\tiny $\cdots$};
\node at (2,-0.7) {\tiny $\cdots$};
\node at (1.5,-3.3) {\tiny $\cdots$};
\node at (5.5,-2) {\tiny $\cdots$};
\node[bolito] (b1) at (0.5,-4) {};
\node[bolito] (b3) at (2.8,-4) {};
\node[bolito] (b4) at (4,-4) {};
\node[bolito] (b6) at (7,-4) {};
\draw (-0.5,-2.7) rectangle (3.5,-1.3);
\node at (1.5,-2) {$f$};
%  COMMENTED OUT
%\draw[thick] (a1) to ++(0,-1.3);
%\draw[thick] (a2) to ++(0,-1.3);
%\draw[thick] (a3) to ++(0,-1.3);
\draw[thick] (a1) to (0,-1.3);
\draw[thick] (a2) to (1,-1.3);
\draw[thick] (a3) to (3,-1.3);
\draw[thick] (b1) to (0.5,-2.7);
\draw[thick] (b3) to (2.8,-2.7);
\draw[thick] (a4) to (b4);
\draw[thick] (a6) to (b6);
\draw[thick] (b1) .. controls (0.5,-5.5) and (7,-5.5) .. (b6);
\draw[thick] (b3) to [bend right=50] (b4);
\node at (3.5,-4.7) {\tiny $\cdots$};
}\quad .
\end{equation}
and $\Psi_{n,m}$ to
\begin{equation}\label{eq:Psinm}
\tik{0.5}{%
\draw[very thin] (-1,0) -- (8,0);
\node[bolito] (a1) at (0,0) [label=above: \raisebox{1pt}{\tiny $1$}] {};
\node[bolito] (a2) at (1,0) [label=above: \raisebox{1pt}{\tiny $2$}] {};
\node[bolito] (a3) at (3,0) [label=above: \raisebox{1pt}{\tiny $n$}] {};
\node[bolito] (a4) at (4,0) [label=above: {\tiny $n$+$1$}] {};
\node[bolito] (a5) at (7,0) [label=above: {\tiny $n$+$m$}] {};
\node at (2,0.5) {\tiny $\cdots$};
\node at (5.5,0.5) {\tiny $\cdots$};
\node at (2,-0.7) {\tiny $\cdots$};
\node at (5.5,-0.7) {\tiny $\cdots$};
\draw (-0.5,-2.7) rectangle (7.5,-1.3);
\node at (3.5,-2) {$h$};
%%   COMMENTED OUT
%\draw[thick] (a1) to ++(0,-1.3);
%\draw[thick] (a2) to ++(0,-1.3);
%\draw[thick] (a3) to ++(0,-1.3);
%\draw[thick] (a4) to ++(0,-1.3);
%\draw[thick] (a5) to ++(0,-1.3);
\draw[thick] (a1) to (0,-1.3);
\draw[thick] (a2) to (1,-1.3);
\draw[thick] (a3) to (3,-1.3);
\draw[thick] (a4) to (4,-1.3);
\draw[thick] (a5) to (7,-1.3);
} 
\
\xrightarrow{\Psi_{n,m}}
\
\tik{0.5}{%
\draw[very thin] (-1,0) -- (5,0);
\draw[very thin] (-1,-3) --(12,-3);
\draw[very thin] (3,-5.5) --(12,-5.5);
\node[bolito] (a1) at (0,0) [label=above: {\tiny $1$}] {};
\node[bolito] (a2) at (1,0) [label=above: {\tiny $2$}] {};
\node[bolito] (a3) at (3,0) [label=above: {\tiny $n$}] {};
\node at (2,0.5) {\tiny $\cdots$};
\node at (2,-1.5) {\tiny $\cdots$};
\node[bolito] (b1) at (0,-3)  {};
\node[bolito] (b2) at (1,-3)  {};
\node[bolito] (b3) at (3,-3)  {};
\node[bolito] (b4) at (4,-3)  {};
\node[bolito] (b5) at (7,-3)  {};
\node[bolito] (b6) at (8,-3)  {};
\node[bolito] (b7) at (11,-3)  {};
\draw (-0.5,-5) rectangle (7.5,-3.5);
\node at (3.5,-4.3) {$h$};
%  COMMENTED OUT
%\draw[thick] (a1) to ++(0,-3.5);
%\draw[thick] (a2) to ++(0,-3.5);
%\draw[thick] (a3) to ++(0,-3.5);
\draw[thick] (a1) to (0,-3.5);
\draw[thick] (a2) to (1,-3.5);
\draw[thick] (a3) to (3,-3.5);
%
%\draw[thick] (b4) to ++(0,-0.5);
%\draw[thick] (b5) to ++(0,-0.5);
\draw[thick] (b4) to (4,-3.5);
\draw[thick] (b5) to (7,-3.5);
\draw[thick] (b4) .. controls (4,-1.5) and (11,-1.5) .. (b7);
\draw[thick] (b5) to [bend left=30pt] (b6);
\node[bolito] (c1) at (8,-5.5) [label=below: {\tiny $1$}] {};
\node[bolito] (c2) at (11,-5.5) [label=below: {{\vrule width 0pt height 4pt}\tiny $m$}] {};
\draw[thick] (b6) to (c1);
\draw[thick] (b7) to (c2);
\node at (5.5,-2.7) {\tiny $\cdots$};
\node at (9.5,-2.7) {\tiny $\cdots$};
\node at (9.5,-4) {\tiny $\cdots$};
}
\end{equation}  

Lastly, the result in item (iii) is represented as follows:
\begin{equation}\label{eq:transpose}
\tik{0.5}{%
\draw[very thin] (-1,0) -- (4,0);
\draw[very thin] (-1,-4) --(4,-4);
\node[bolito] (a1) at (0,-4) [label=below: {\tiny $1$}] {};
\node[bolito] (a2) at (1,-4) [label=below: {\tiny $2$}] {};
\node[bolito] (a3) at (3,-4) [label=below: {{\vrule width 0pt height 4pt}\tiny $n$}] {};
\node at (2,-4.5) {\tiny $\cdots$};
\node at (2,-3.2) {\tiny $\cdots$};
\node at (1.5,-0.8) {\tiny $\cdots$};
\node at (1.5,0.5) {\tiny $\cdots$};
\node[bolito] (b1) at (0.3,0) [label=above: {\tiny $1$}] {};
\node[bolito] (b2) at (2.7,0) [label=above: {\tiny $m$}] {};
\draw (-0.5,-2.7) rectangle (3.5,-1.3);
\node at (1.5,-2) {$f^t$};
%  COMMENTED OUT
%\draw[thick] (a1) to ++(0,1.3);
%\draw[thick] (a2) to ++(0,1.3);
%\draw[thick] (a3) to ++(0,1.3);
\draw[thick] (a1) to (0,-2.7);
\draw[thick] (a2) to (1,-2.7);
\draw[thick] (a3) to (3,-2.7);
%
%\draw[thick] (b1) to ++(0,-1.3);
%\draw[thick] (b2) to ++(0,-1.3);
\draw[thick] (b1) to (0.3,-1.3);
\draw[thick] (b2) to (2.7,-1.3);
}
\qquad
=
\qquad
\tik{0.5}{%
\draw[very thin] (-5,3) --  (8,3);
\draw[very thin] (-5,0) -- (8,0);
\draw[very thin] (-5,-4) --(8,-4);
\draw[very thin] (-5,-7) -- (8,-7);
\node[bolito] (c1) at (4,3) [label=above: {\tiny $1$}] {};
\node[bolito] (c2) at (7,3) [label=above: {\tiny $m$}] {};
%  COMMENTED OUT
%\draw[thick] (c1) to ++(0,-7);
%\draw[thick] (c2) to ++(0,-7);
\draw[thick] (c1) to (4,-4);
\draw[thick] (c2) to (7,-4);
\node[bolito] (a1) at (0,0)  {};
\node[bolito] (a2) at (1,0)  {};
\node[bolito] (a3) at (3,0)  {};
\node at (2,0.5) {\tiny $\cdots$};
\node at (2,-0.8) {\tiny $\cdots$};
\node at (1.5,-3.2) {\tiny $\cdots$};
\node at (1.5,-4.6) {\tiny $\cdots$};
\node[bolito] (b1) at (0.3,-4)  {};
\node[bolito] (b2) at (2.7,-4)  {};
\draw (-0.5,-2.7) rectangle (3.5,-1.3);
\node at (1.5,-2) {$f$};
% COMMENTED OUT
%\draw[thick] (a1) to ++(0,-1.3);
%\draw[thick] (a2) to ++(0,-1.3);
%\draw[thick] (a3) to ++(0,-1.3);
\draw[thick] (a1) to (0,-1.3);
\draw[thick] (a2) to (1,-1.3);
\draw[thick] (a3) to (3,-1.3);
%
%\draw[thick] (b1) to ++(0,1.3);
%\draw[thick] (b2) to ++(0,1.3);
\draw[thick] (b1) to (0.3,-2.7);
\draw[thick] (b2) to (2.7,-2.7);
\node[bolito] (d1) at (-4,-7) [label=below: {\tiny $1$}] {};
\node[bolito] (d2) at (-2,-7) [label=below: {\tiny $n$--$1$}] {};
\node[bolito] (d3) at (-1,-7) [label=below: {{\vrule width 0pt height 4pt}\tiny $n$}] {};
% COMMENTED OUT
%\draw[thick] (d1) to ++(0,7);
%\draw[thick] (d2) to ++(0,7);
%\draw[thick] (d3) to ++(0,7);
\draw[thick] (d1) to (-4,0);
\draw[thick] (d2) to (-2,0);
\draw[thick] (d3) to (-1,0);
\draw[thick] (-4,0) .. controls (-4,2) and (3,2) .. (a3);
\draw[thick] (-2,0) .. controls (-2,1) and (1,1) .. (a2);
\draw[thick] (-1,0) to [bend left=40pt] (a1);
\draw[thick] (b1) .. controls (0.3,-5.5) and (7,-5.5) .. (7,-4);
\draw[thick] (b2) to [bend right=40pt] (4,-4);
\node at (-3,-2) {\tiny $\cdots$};
\node at (5.5,-2) {\tiny $\cdots$};
\node at (-3,-7.5) {\tiny $\cdots$};
\node at (5.5,3.5) {\tiny $\cdots$};
}
\end{equation}

\begin{remark}\label{re:bbt} 
  From item (i) in Proposition \ref{pr:tPhiPsi} we get:
\[
(\mathsf{b}\circ \mathsf{b}^t)(1)=\sum_{i=1}^d \mathsf{b}( v_i\otimes u_i)=
\begin{cases} \dimm_\FF \VV&\text{if $\mathsf{b}$ is symmetric,}\\
   -\dimm_\FF \VV&\text{if $\mathsf{b}$ is skew symmetric.}
   \end{cases}
\]
\end{remark}  

%------------------------------------
%
%Section: 3-tangles
%
%------------------------------------

\bigskip   

\section{$3$-tangles}\label{se:3tangles}

In this section, we establish the facts needed about
$3$-tangles and certain $3$-tangle categories,
following for the most part the definitions and conventions of \cite{CKR},
 but with some slight modifications.

\subsection{3-tangles, composition and disjoint union} \ \  Let $G=(V,E)$ be a finite, undirected graph, where $V$ is the set of nodes (vertices) and $E$ is the set of edges, which is a subset of the set of unordered pairs $uv$ of different nodes, $u,v \in V$, $u \ne v$. 

The \emph{valency} of a node $v$ is the number $\val(v)$ of edges incident with $v$. The cyclic group $C_3$ acts on the Cartesian power $E^3$ by cyclic permutations. Denote by $E^3/C_3$ the set of orbits, and by $[e_1,e_2,e_3]$ the orbit of $(e_1,e_2,e_3)$.

We will consider only graphs $G=(V,E)$ with $1\leq \val(v)\leq 3$ for all $v\in V$, and which are endowed with a map (\emph{orientation})
\[
\nu: \{v\in V : \val(v)=3\}\rightarrow E^3/C_3,
\]
such that, for any $v\in V$ with $\val(v)=3$, $\nu(v)=[e_1e_2e_3]$, where $e_1,e_2,e_3$ are the edges incident with $\nu$. That is, $\nu$ chooses one of the two possible orientations of the edges incident with $v$. Orientations are indicated by a coloring,
black for positive (clockwise) and white for negative (counterclockwise):
\[
\tik{0.5}{%
\node[shape=circle,draw,fill=black,minimum size=5pt,inner sep=0pt] (origin) at (0,0) {};
\path (150:2cm) coordinate (a);
\path (30:2cm) coordinate (b);
\path (270:2cm) coordinate (c);
\draw (origin) -- (a) (origin) -- (b) (origin) -- (c);
\node at (-1.3,1.1) {$a$};
\node at (1.3,1.1) {$b$};
\node at (.3,-1.7) {$c$};
\node at (0,0.4) {$v$};
\node at (0,-3) {positive : $\nu(v)=[abc]$};
}\ ,\qquad
\tik{0.5}{%
\node[shape=circle,draw,minimum size=5pt,inner sep=0pt] (origin) at (0,0) {};
\path (150:2cm) coordinate (a);
\path (30:2cm) coordinate (b);
\path (270:2cm) coordinate (c);
\draw (origin) -- (a) (origin) -- (b) (origin) -- (c);
\node at (-1.3,1.1) {$a$};
\node at (1.3,1.1) {$b$};
\node at (.3,-1.7) {$c$};
\node at (0,0.4) {$v$};
\node at (0,-3) {negative : $\nu(v)=[acb]$};
}\ .
\]   
The set of nodes of valency $1$ will be called the \emph{boundary} of $G$ and denoted by $\partial G$. A \emph{path} in $G=(V,E)$ is a subgraph $P$ with nodes $\{v_0,v_1,\ldots,v_k\}$ and edges $\{v_0v_1,v_1v_2,\ldots,v_{k-1}v_k\}$, where the $v_i$'s  are pairwise distinct. We write  $P=v_0v_1\cdots v_k$ and call $k$ the \emph{length} of $P$. A \emph{refinement} of a graph $G=(V,E)$ is a graph obtained from $G$ by dividing some of the edges, i.e., by replacing these edges with paths having the same endpoints and so that intermediate nodes are not nodes of $G$, and different edges involve different new nodes:
\[
\tik{0.5}{%
\node[bolito] (a1) at (0,0) {};
\node[bolito] (a2) at (0,-1) {};
\node[bolito] (a3) at (-2,-1) {};
\node[bolito] (a4) at (-1,-1.5) {};
\node[bolito] (a5) at (2,-1) {};
\node[bolito] (a6) at (1,-1.5) {};
\node[bolito] (a7) at (1,-2) {};
\node[bolito] (a8) at (0.5,-2.5) {};
\node[bolito] (a9) at (1,-3) {};
\draw[thick] (a3) -- (a4) -- (a2) -- (a6) -- (a5);
\draw[thick] (a1) to (a2);
\draw[thick] (a6) to (a7);
\draw[thick] (1,-2.5) circle (0.5cm);
}
\qquad
\xrightarrow{\text{refinement}}
\qquad
\tik{0.5}{%
\node[bolito] (a1) at (0,0) {};
\node[bolito] (a2) at (0,-1) {};
\node[bolito] (a3) at (-2,-1) {};
\node[bolito] (a4) at (-1,-1.5) {};
\node[bolito] (a5) at (2,-1) {};
\node[bolito] (a6) at (1,-1.5) {};
\node[bolito] (a7) at (1,-2) {};
\node[bolito] (a8) at (0.5,-2.5) {};
\node[bolito] (a9) at (1,-3) {};
\node[bolito] (b1) at (-1.5,-1.25) {};
\node[bolito] (b2) at (-0.5,-1.25) {};
\node[bolito] (b3) at (1.5,-2.5) {};
\node[bolito] (b4) at (0,-0.3) {};
\node[bolito] (b5) at (0,-0.7) {};
\draw[thick] (a3) -- (a4) -- (a2) -- (a6) -- (a5);
\draw[thick] (a1) to (a2);
\draw[thick] (a6) to (a7);
\draw[thick] (1,-2.5) circle (0.5cm);
}\ .
\] 
Two such graphs $G=(V,E)$ and $G'=(V',E')$ are said to be equivalent if $\partial G=\partial G'$,  and they admit identical refinements. Each equivalence class has a representative having no nodes of valency $2$ except for the nodes on circles.

For any $n\in\NN$, let $[n]=\{1,\ldots,n\}$ with $[0]=\emptyset$. Then, for $n,m\in\NN$, a \emph{$3$-tangle} $\gamma: [n]\rightarrow [m]$ is an equivalence class of graphs $G=(V,E)$ with $\partial G=[n]\sqcup [m]$ (disjoint union, which may thought of as $\{1,\ldots,n,1',\ldots,m'\}$). This is represented graphically by taking a representative with no nodes of valency $2$ (except for the circles), drawn between two lines,  as in the next example $\gamma:[7]\rightarrow [4]$
\begin{equation}\label{eq:gamma_example}
\gamma\quad =\qquad
\tik{0.5}{%
\draw[very thin] (0,0) -- (8,0);
\node[bolito] (a1) at (1,0) [label=above: {\tiny $1$}] {};
\node[bolito] (a2) at (2,0) [label=above: {\tiny $2$}] {};
\node[bolito] (a3) at (3,0) [label=above: {\tiny $3$}] {};
\node[bolito] (a4) at (4,0) [label=above: {\tiny $4$}] {};
\node[bolito] (a5) at (5,0) [label=above: {\tiny $5$}] {};
\node[bolito] (a6) at (6,0) [label=above: {\tiny $6$}] {};
\node[bolito] (a7) at (7,0) [label=above: {\tiny $7$}] {};
%
%\draw[very thin] (0,-4) -- (8,-4);
\node[bolito] (b1) at (1.5,-4) [label=below: {\tiny $1'$}] {};
\node[bolito] (b2) at (3,-4) [label=below: {\tiny $2'$}] {};
\node[bolito] (b3) at (4.5,-4) [label=below: {\tiny $3'$}] {};
\node[bolito] (b4) at (6,-4) [label=below: {\tiny $4'$}] {};
\node[bolito] (c1) at (1.5,-1.5)  {}; %[label=below left: {\tiny $x$}]
\node[bolito] (c2) at (2,-2.5) {};
\node[bolito] (c3) at (3.2,-1.5) {};
\node (x) at (1.2,-1.6) {\tiny $x$};
\node (y) at (1.7,-2.5) {\tiny $y$};
\draw[thick] (4.5,-2) circle (0.5cm);
\node[bolito] (c4) at (4,-2) {};
\node[bolito] (c5) at (5,-2) {};
\node[bolito] (c6) at (4.5,-1.5) {};
\draw[thick] (a3) to (c3);
\draw[thick] (a4) to (c3);
\draw[thick] (c3) to (b2);
\draw[thick] (a5) to [bend right=60] (a6);
\draw[thick] (a7) to (b4);
\draw[thick] (c1) to (c2);
\draw[thick] (c2) to (b1);
\draw[cross] (c2) to (b3);
\draw[thick] (a2) .. controls (1,-1) and (1,-1) .. (c1);
\draw[cross] (a1) .. controls (2,-1) and (2,-1) .. (c1);
\draw[very thin] (0,-4) -- (8,-4);
}
\end{equation}  
where the orientation of a valency $3$ node (like the node labeled by $x$ in \eqref{eq:gamma_example}) is given by clockwise order: $\nu(x)=[2x\, 1x\, yx]$, that is, we have:
\[
\tik{0.5}{%
\draw[very thin] (0,0) -- (3,0);
\node[bolito] (a1) at (1,0)  {};
\node[bolito] (a2) at (2,0)  {};
\draw[very thin] (0,-2) -- (3,-2);
\node[bolito] (b1) at (1.5,-2)  {};
\node[bolito] (c1) at (1.5,-1.2)  {};
\draw[thick] (c1) to (b1);
\draw[thick] (a2) .. controls (1,-0.6) and (1,-0.6) .. (c1);
\draw[cross] (a1) .. controls (2,-0.6) and (2,-0.6) .. (c1);
}
\quad = \quad
\tik{0.5}{%
%\draw[very thin] (0,0) -- (3,0);
\node[bolito] (a1) at (1,0)  {};
\node[bolito] (a2) at (2,0)  {};
\draw[very thin] (0,-2) -- (3,-2);
\node[bolito] (b1) at (1.5,-2)  {};
\node[shape=circle,draw,minimum size=3pt,inner sep=0pt ] (c1) at (1.5,-1)  {};
\draw[thick] (c1) to (b1);
\draw[thick] (a2) to (c1);
\draw[cross] (a1) to (c1);
\draw[very thin] (0,0) -- (3,0);
}
\]
and hence only positively oriented nodes will be used. Moreover,  
we will usually suppress the $\bullet$'s in our figures:
\begin{equation}\label{eq:gamma_example_bis}
\gamma\quad =\qquad
\tik{0.5}{%
\node[punto] (a1) at (1,0) [label=above: {\tiny $1$}] {};
\node[punto] (a2) at (2,0) [label=above: {\tiny $2$}] {};
\node[punto] (a3) at (3,0) [label=above: {\tiny $3$}] {};
\node[punto] (a4) at (4,0) [label=above: {\tiny $4$}] {};
\node[punto] (a5) at (5,0) [label=above: {\tiny $5$}] {};
\node[punto] (a6) at (6,0) [label=above: {\tiny $6$}] {};
\node[punto] (a7) at (7,0) [label=above: {\tiny $7$}] {};
%
%\draw[very thin] (0,-4) -- (8,-4);
\node[punto] (b1) at (1.5,-4) [label=below: {\tiny $1'$}] {};
\node[punto] (b2) at (3,-4) [label=below: {\tiny $2'$}] {};
\node[punto] (b3) at (4.5,-4) [label=below: {\tiny $3'$}] {};
\node[punto] (b4) at (6,-4) [label=below: {\tiny $4'$}] {};
\node[punto] (c1) at (1.5,-1.5)  {}; %[label=below left: {\tiny $x$}]
\node[punto] (c2) at (2,-2.5) {};
\node[punto] (c3) at (3.2,-1.5) {};
\node (x) at (1.2,-1.6) {\tiny $x$};
\node (y) at (1.7,-2.5) {\tiny $y$};
\draw[thick] (4.5,-2) circle (0.5cm);
\draw[thick] (a3) to (c3);
\draw[thick] (a4) to (c3);
\draw[thick] (c3) to (b2);
\draw[thick] (a5) to [bend right=60] (a6);
\draw[thick] (a7) to (b4);
\draw[thick] (c1) to (c2);
\draw[thick] (c2) to (b1);
\draw[cross] (c2) to (b3);
\draw[thick] (a2) .. controls (1,-1) and (1,-1) .. (c1);
\draw[cross] (a1) .. controls (2,-1) and (2,-1) .. (c1);
\draw[very thin] (0,0) -- (8,0);
\draw[very thin] (0,-4) -- (8,-4);
}
\end{equation}  
The \emph{disjoint union} of two $3$-tangles $\gamma:[n]\rightarrow [m]$ and $\gamma^*:[n^*]\rightarrow [m^*]$ is the $3$-tangle $\gamma \sqcup\gamma^*:[n+n^*]\rightarrow [m+m^*]$ obtained by  juxtaposing $\gamma^*$ to the right of $\gamma$
and shifting the numbering on $\gamma$ as illustrated below. The composition of two $3$-tangles $\gamma:[n]\rightarrow [m]$ and $\gamma^*:[m]\rightarrow [r]$ is the tangle $\gamma^* \circ\gamma:[n]\rightarrow [r]$ obtained by bordism: 
\[
\begin{split}
\gamma&=\ 
\tik{0.5}{%
%\node (g1) at (-1,-1)  [label=left: {$\gamma=$}] {};
%\node (g2) at (-1,-5)  [label=left: {$\gamma^*=$}] {};
%\node (g3) at (-1,-10.5) [label=left: {$\gamma^*\circ\gamma=$}] {};
%
%\draw[very thin] (0,0) -- (5,0);
\node[punto] (a1) at (1,0) [label=above: {\tiny $1$}] {};
\node[punto] (a2) at (2,0) [label=above: {\tiny $2$}] {};
\node[punto] (a3) at (3,0) [label=above: {\tiny $3$}] {};
\node[punto] (a4) at (4,0) [label=above: {\tiny $4$}] {};
\draw[very thin] (0,-2) -- (5,-2);
\node[punto] (b1) at (1,-2) [label=below left: {\tiny $1'$}] {};
\node[punto] (b2) at (2,-2) [label=below right: {\tiny $2'$}] {};
\node[punto] (c1) at (1.5,-1) {};
\node[punto] (c2) at (2.5,-1.2) {};
\draw[thick] (a3) to [bend right=60] (a4);
\draw[thick] (2.5,-0.8) circle (0.4cm);
\draw[thick] (3.5,-1) circle (0.4cm);
\draw[thick] (c1) to (b1);
\draw[thick] (c2) to (b2);
\draw[thick] (a2) .. controls (1,-0.5) and (1,-0.5) .. (c1);
\draw[cross] (a1) .. controls (2,-0.5) and (2,-0.5) .. (c1);
\draw[very thin] (0,0) -- (5,0);
}
\\[-15pt]
\gamma^*&=\ 
\tik{0.5}{%
\draw[very thin] (0,-4) --(5,-4);
\node[punto] (aa1) at (1,-4) [label=above left: {\tiny $1$}] {};
\node[punto] (aa2) at (2,-4) [label=above right: {\tiny $2$}] {};
\draw[thin,dotted] (b1) to (aa1);
\draw[thin,dotted] (b2) to (aa2);
\draw[very thin] (0,-6) to (5,-6);
\node[punto] (bb1) at (1,-6) [label=below: {\tiny $1'$}] {};
\node[punto] (bb2) at (2,-6) [label=below: {\tiny $2'$}] {};
\node[punto] (bb3) at (3,-6) [label=below: {\tiny $3'$}] {};
\node[punto] (cc1) at (1.5,-5) {};
\draw[thick] (aa1) to (cc1);
\draw[thick] (aa2) to (cc1);
\draw[thick] (cc1) to (bb1);
\draw[thick] (bb2) to [bend left=60] (bb3);
}\\[5pt]
\gamma \sqcup\gamma^*&=\ 
\tik{0.5}{%
%\node (g) at (-1,-1)  [label=left: {$\gamma \sqcup\gamma^*=$}] {};
%
%\draw[very thin] (0,0) -- (7,0);
\node[punto] (a1) at (1,0) [label=above: {\tiny $1$}] {};
\node[punto] (a2) at (2,0) [label=above: {\tiny $2$}] {};
\node[punto] (a3) at (3,0) [label=above: {\tiny $3$}] {};
\node[punto] (a4) at (4,0) [label=above: {\tiny $4$}] {};
\node[punto] (aa1) at (5,0) [label=above: {\tiny $5$}] {};
\node[punto] (aa2) at (6,0) [label=above: {\tiny $6$}] {};
\draw[very thin] (0,-2) -- (7,-2);
\node[punto] (b1) at (1,-2) [label=below: {\tiny $1'$}] {};
\node[punto] (b2) at (2,-2) [label=below: {\tiny $2'$}] {};
\node[punto] (bb1) at (3,-2) [label=below: {\tiny $3'$}] {};
\node[punto] (bb2) at (4,-2) [label=below: {\tiny $4'$}] {};
\node[punto] (bb3) at (5,-2) [label=below: {\tiny $5'$}] {};
\node[punto] (c1) at (1.5,-1) {};
\node[punto] (c2) at (2.5,-1.2) {};
\node[punto] (cc1) at (5.5,-1) {};
\draw[thick] (a3) to [bend right=60] (a4);
\draw[thick] (2.5,-0.8) circle (0.4cm);
\draw[thick] (3.5,-1) circle (0.4cm);
\draw[thick] (c1) to (b1);
\draw[thick] (c2) to (b2);
\draw[thick] (a2) .. controls (1,-0.5) and (1,-0.5) .. (c1);
\draw[cross] (a1) .. controls (2,-0.5) and (2,-0.5) .. (c1);
\draw[very thin] (0,0) -- (7,0);
\draw[thick] (aa1) to (cc1);
\draw[thick] (aa2) to (cc1);
\draw[thick] (cc1) to (bb1);
\draw[thick] (bb2) to [bend left=60] (bb3);
}
\\[5pt]
\gamma^*\circ\gamma&=\ 
\tik{0.5}{%
\draw[very thin] (0,-9) to (5,-9);
\node[punto] (aaa1) at (1,-9) [label=above: {\tiny $1$}] {};
\node[punto] (aaa2) at (2,-9) [label=above: {\tiny $2$}] {};
\node[punto] (aaa3) at (3,-9) [label=above: {\tiny $3$}] {};
\node[punto] (aaa4) at (4,-9) [label=above: {\tiny $4$}] {};
\draw[very thin] (0,-12) to (5,-12);
\node[punto] (bbb1) at (1,-12) [label=below: {\tiny $1'$}] {};
\node[punto] (bbb2) at (2,-12) [label=below: {\tiny $2'$}] {};
\node[punto] (bbb3) at (3,-12) [label=below: {\tiny $3'$}] {};
\node[punto] (ccc1) at (1,-10) {};
\node[punto] (ccc2) at (3,-10.5) {};
\node[punto] (ccc3) at (2,-11) {};
\draw[thick] (aaa3) to [bend right=60] (aaa4);
\draw[thick] (bbb2) to [bend left=60] (bbb3);
\draw[thick] (4,-10.5) circle (0.4cm);
\draw[thick] (3,-10.1) circle (0.4cm);
\draw[thick] (ccc1) to (ccc3) ;
\draw[thick] (ccc2) to (ccc3);
\draw[thick] (ccc3) to (bbb1);
\draw[thick] (aaa2) .. controls (1,-9.5) and (0.5,-9.5) .. (ccc1);
\draw[cross] (aaa1) .. controls (1.2,-9.5) and (1.2,-9.8) .. (ccc1);
}
\end{split}
\]

The \emph{transpose} of a $3$-tangle is obtained rotating it $180^{\circ}$. In \cite{CKR},  the transpose is gotten  by using a reflection, however, this changes the orientation of the nodes of valency $3$, and we want to avoid that.  For $\gamma$ in \eqref{eq:gamma_example_bis}, its transpose is
\[
\gamma^t\quad =\qquad
\tik{0.5}{%
\draw[very thin] (0,-4) -- (8,-4);
\node[bolito] (a1) at (7,-4) [label=below: {\tiny $7'$}] {};
\node[bolito] (a2) at (6,-4) [label=below: {\tiny $6'$}] {};
\node[bolito] (a3) at (5,-4) [label=below: {\tiny $5'$}] {};
\node[bolito] (a4) at (4,-4) [label=below: {\tiny $4'$}] {};
\node[bolito] (a5) at (3,-4) [label=below: {\tiny $3'$}] {};
\node[bolito] (a6) at (2,-4) [label=below: {\tiny $2'$}] {};
\node[bolito] (a7) at (1,-4) [label=below: {\tiny $1'$}] {};
%
%\draw[very thin] (0,0) -- (8,0);
\node[bolito] (b1) at (6,0)  [label=above: {\tiny $4$}] {};
\node[bolito] (b2) at (4.5,0) [label=above: {\tiny $3$}] {};
\node[bolito] (b3) at (3,0) [label=above: {\tiny $2$}] {};
\node[bolito] (b4) at (1.5,0) [label=above: {\tiny $1$}] {};
\node[bolito] (c1) at (6.5,-2.5)  {}; %[label=below left: {\tiny $x$}]
\node[bolito] (c2) at (6,-1.5) {};
\node[bolito] (c3) at (4.8,-2.5) {};
\node (x) at (6.8,-2.4) {\tiny $x$};
\node (y) at (6.3,-1.5) {\tiny $y$};
\draw[thick] (3.5,-2) circle (0.5cm);
\node[bolito] (c4) at (4,-2) {};
\node[bolito] (c5) at (3,-2) {};
\node[bolito] (c6) at (3.5,-2.5) {};
\draw[thick] (a3) to (c3);
\draw[thick] (a4) to (c3);
\draw[thick] (c3) to (b2);
\draw[thick] (a5) to [bend right=60] (a6);
\draw[thick] (a7) to (b4);
\draw[thick] (c1) to (c2);
\draw[thick] (c2) to (b1);
\draw[cross] (c2) to (b3);
\draw[thick] (a2) .. controls (7,-3) and (7,-3) .. (c1);
\draw[cross] (a1) .. controls (6,-3) and (6,-3) .. (c1);
\draw[very thin] (0,0) -- (8,0);
}\ .
\]
Alternatively, the transpose of $\gamma$ can be obtained as in \eqref{eq:transpose} by disjoint union and composition:
\begin{equation}\label{eq:gamma_transpose}
\gamma^t=(\II_n\sqcup\beta_m)\circ(\II_n\sqcup\gamma\sqcup \II_m)\circ(\beta_n^t\sqcup \II_m),
\end{equation}
where
\[
\II_n=
\tik{0.5}{%
\draw[very thin] (0,0) --(5,0);
\node[punto] (a1) at (1,0) [label=above :{\tiny $1$}] {};
\node[punto] (a2) at (2,0) [label=above :{\tiny $2$}] {};
\node[punto] (a3) at (4,0) [label=above :{\tiny $n$}] {};
\node at (3,0.3) {$\cdots$};
\draw[very thin] (0,-2) -- (5,-2);
\node[punto] (b1) at (1,-2) [label=below:{\tiny $1'$}] {};
\node[punto] (b2) at (2,-2) [label=below:{\tiny $2'$}] {};
\node[punto] (b3) at (4,-2) [label=below:{\tiny $n'$}] {};
\node at (3,-2.2) {$\cdots$};
\draw[thick] (a1) to (b1);
\draw[thick] (a2) to (b2);
\draw[thick] (a3) to (b3);
\node at (3,-1) {$\cdots$};
}
,\quad 
\beta_n=
\tik{0.5}{%
\draw[very thin] (0,0) -- (6,0);
\node[punto] (a1) at (1,0) [label=above: {\tiny $1$}] {};
\node[punto] (a2) at (2,0) [label=above: {\tiny $2$}] {};
\node[punto] (a3) at (4,0) [label=above: {\tiny $2n$--$1$}] {};
\node[punto] (a4) at (5,0) [label=above: {\tiny $2n$}] {};
\node at (2.9,0.3) {\tiny$\cdots$};
\node at (3,-0.3) {$\cdots$};
\draw[thick] (a1) to [bend right=80] (a4);
\draw[thick] (a2) to [bend right=80] (a3);
}
,\quad
\beta_n^t=
\tik{0.5}{%
\draw[very thin] (0,-2) -- (6,-2);
\node[punto] (a1) at (1,-2) [label=below: {\tiny $1'$}] {};
\node[punto] (a2) at (2,-2) [label=below: {\tiny $2'$}] {};
\node[punto] (a3) at (4,-2) [label=below: {\tiny $(2n$--$1)'$}] {};
\node[punto] (a4) at (5,-2) [label=below: {\tiny\qquad $(2n)'$}] {};
\node at (2.8,-2.3) {\tiny$\cdots$};
\node at (3,-1.8) {$\cdots$};
\draw[thick] (a1) to [bend left=80] (a4);
\draw[thick] (a2) to [bend left=80] (a3);
}.
\]
 \subsection{The 3-tangle categories $\cT$ and $\cT_\Gamma$} \ \ 
Assume $\FF$ is a field.  The \emph{$3$-tangle category}  $\cT$ has as objects the finite sets $[n]$, $n\in\NN$, and as morphisms
$\Mor_{\cT}([n],[m])$, the 
$\FF$-linear combinations of $3$-tangles. The disjoint union $\sqcup$ induces a tensor product $\sqcup:\cT \times\cT \rightarrow \cT$. The tensor product is associative and has the unit object given by the empty $3$-tangle $[0]\rightarrow [0]$.

The transpose induces a map $\Mor_{\cT}([n],[m])\rightarrow\Mor_{\cT}([m],[n])$, $\gamma\mapsto \gamma^t$,  such that $(\gamma^*\circ\gamma)^t=\gamma^t\circ(\gamma^*)^t$ whenever this makes sense.

Also, as in Proposition \ref{pr:tPhiPsi} (see equations \eqref{eq:Phinm} and \eqref{eq:Psinm}), there is a  bijection
\begin{equation}\label{eq:Phinm_bis}
\begin{split}
\Phi_{n,m}:\Mor_{\cT}([n],[m])&\longrightarrow \Mor_{\cT}([n+m],[0])\\
\gamma\quad &\mapsto\quad \beta_m\circ(\gamma\sqcup  \II_n),
\end{split}
\end{equation}
with inverse
\begin{equation}\label{eq:Psinm_bis}
\begin{split}
\Psi_{n,m}:\Mor_{\cT}([n+m],[0])&\longrightarrow \Mor_{\cT}([n],[m])\\
\gamma'\quad &\mapsto (\gamma'\sqcup  \II_m)\circ(\II_n\sqcup  \beta_m^t).
\end{split}
\end{equation}

The morphisms 
\begin{align}\label{eq:basic1}
\II_1&=
\tik{0.5}{%
\draw[very thin] (0,0) -- (2,0);
\draw[very thin] (0,-2) -- (2,-2);
\draw[thick] (1,0) -- (1,-2);
}
&
\beta=\beta_1&=
\tik{0.5}{%
\draw[very thin] (0,0) -- (3,0);
\draw[thick] (1,0) to [bend right=80] (2,0);
\node at (1,-1) {};
}
&
\mu&=
\tik{0.5}{%
\draw[very thin] (0,0) -- (3,0);
\draw[very thin] (0,-2) -- (3,-2);
\draw[thick] (1,0) to (1.5,-1);
\draw[thick] (2,0) to (1.5,-1);
\draw[thick] (1.5,-1) to (1.5,-2);
}
\\[10pt]   \label{eq:basic2} 
\tau&=
\tik{0.5}{%
\draw[thick] (2,0) to (1,-2);
\draw[cross] (1,0) to (2,-2);
\draw[very thin] (0,0) -- (3,0);
\draw[very thin] (0,-2) -- (3,-2);
}
&
\beta^t&=
\tik{0.5}{%
\draw[very thin] (0,-2) -- (3,-2);
\draw[thick] (1,-2) to [bend left=80] (2,-2);
\node at (2,-1.5) {};
}
&
\mu^t&=
\tik{0.5}{%
\draw[very thin] (0,0) -- (3,0);
\draw[very thin] (0,-2) -- (3,-2);
\draw[thick] (1,-2) to (1.5,-1);
\draw[thick] (2,-2) to (1.5,-1);
\draw[thick] (1.5,-1) to (1.5,0);
}
\end{align} 
are called \emph{basic}. They constitute the \emph{alphabet} of $\cT$. Note that $\mu^t$ can be obtained from the other basic morphisms by composition and tensor product  as in \eqref{eq:gamma_transpose}. Also, composing $\mu\circ\tau$,  we get
\[
\tik{0.5}{%
\draw[thick] (2,0) .. controls (1,-0.7) and (1,-0.7) .. (1.5,-1);
\draw[cross] (1,0) .. controls (2,-0.7) and (2,-0.7) .. (1.5,-1);
\draw[thick] (1.5,-1) to (1.5,-2);
\draw[very thin] (0,0) -- (3,0);
\draw[very thin] (0,-2) -- (3,-2);
}
\qquad \text{which equals}\qquad
\tik{0.5}{%
\node[shape=circle,draw, inner sep=0pt, minimum size=3pt] (c1) at (1.5,-1) {};
\draw[thick] (2,0) to (c1);
\draw[thick] (1,0) to (c1);
\draw[thick] (c1) to (1.5,-2);
\draw[very thin] (0,0) -- (3,0);
\draw[very thin] (0,-2) -- (3,-2);
}
\]
The morphisms in $\cT$ are generated, through composition and disjoint union and taking 
linear combinations, from basic morphisms  \cite[Proposition 2.3]{CKR}.

In addition to generators,  some relations can be imposed in the category $\cT$. Let
\[
\Gamma=\{\gamma_i\in\Mor_{\cT}([n_i],[m_i]) : i=1,\ldots,k\}
\]
be a finite set of morphisms in $\cT$. For each $n,m\in\NN$, the set $\Gamma$ generates, through compositions and tensor products with arbitrary $3$-tangles, a subspace $\R_\Gamma([n],[m])$ of $\Mor_{\cT}([n],[m])$, and we define a new category $\cT_\Gamma$ with the same objects and with $\Mor_{\cT_\Gamma}([n],[m])=\Mor_{\cT}([n],[m])/\R_\Gamma([n],[m])$. 
$\cT_\Gamma$ is the \emph{$3$-tangle category associated with the set of relations $\Gamma$}.

\subsection{The functor $\cR_{\frV}$} \ \ 
Assume that $\frV=(\VV,\mathsf{b},\mm)$ is a finite-dimensional nonassociative (i.e. not necessarily associative)  algebra  with multiplication $\mm:\VV\times \VV\rightarrow \VV$ (i.e., $\mm$ is an $\FF$-bilinear map), and endowed with an associative, nondegenerate, symmetric bilinear form $\mathsf{b}:\VV\times \VV\rightarrow \FF$. Associativity of the form means $\mathsf{b}\bigl(\mm(x,y),z\bigr)=\mathsf{b}\bigl(x,\mm(y,z)\bigr)$ for any $x,y,z\in \VV$. Let $\cV$ be the category of finite-dimensional vector spaces over $\FF$ with linear maps as morphisms. Denote by $\tau$ the switch map $\tau:\VV^{\otimes 2}\rightarrow \VV^{\otimes 2}$, $x\otimes y\mapsto y\otimes x$. Recall that we may identify $\mathsf{b}$ with a linear map $\VV^{\otimes 2}\rightarrow \FF$ and $\mm$ with a linear map $\VV^{\otimes 2}\rightarrow \VV$.   Let $1_{\VV}$ be the identity map on $\VV$. 
\medskip

\begin{theorem}[{\cite[Theorem 3.1]{CKR}, \cite{Boo98}}]\label{th:RV}
There exists a unique functor $\cR_\frV:\cT \rightarrow \cV$ such that:
\begin{enumerate}
\item $\cR_{\frV}([0])=\FF$ and $\cR_{\frV}([n])=\VV^{\otimes n}$, for any $n\geq 1$.

\item $\cR_{\frV}(\II_1)=1_\VV$ and  $\cR_{\frV}(\tau)=\tau$.

\item $\cR_{\frV}(\beta)=\mathsf{b}$, $\cR_{\frV}(\mu)=\mm$, $\cR_{\frV}(\gamma^t)=\cR_{\frV}(\gamma)^t$, and \\
$\cR_{\frV}(\gamma\sqcup \delta)=\cR_{\frV}(\gamma)\otimes\cR_{\frV}(\delta)$, for morphisms $\gamma$ and $\delta$ in $\cT$.
\end{enumerate}
\end{theorem}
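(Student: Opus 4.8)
The plan is to construct the functor $\cR_\frV$ explicitly on the basic morphisms, extend it freely through composition and disjoint union, and then verify that it respects the relations inherent in $\cT$ (equivalences of $3$-tangles, refinement, and the compatibility of transpose with composition/disjoint union). Concretely, I would set $\cR_\frV([0])=\FF$, $\cR_\frV([n])=\VV^{\otimes n}$, and on the alphabet in \eqref{eq:basic1}--\eqref{eq:basic2} put $\cR_\frV(\II_1)=1_\VV$, $\cR_\frV(\tau)=\tau$, $\cR_\frV(\beta)=\mathsf{b}$, $\cR_\frV(\mu)=\mm$, together with $\cR_\frV(\beta^t)=\mathsf{b}^t$ and $\cR_\frV(\mu^t)=\mm^t$ (the transposes in the sense of Definition \ref{df:transpose}). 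Since \cite[Proposition 2.3]{CKR} tells us that every morphism of $\cT$ is obtained from the basic ones by composition, disjoint union, and linear combination, the values of $\cR_\frV$ on basic morphisms determine $\cR_\frV$ uniquely \emph{provided} the assignment is well defined; uniqueness is then immediate, so the content is existence, i.e., well-definedness.

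For well-definedness I would argue that two words in the basic morphisms representing the same $3$-tangle are sent to the same linear map. The key structural fact is that $\cT$ is (the morphism category of) a pivotal/spherical-type category freely generated by the object $[1]$ and the morphisms $\beta,\mu$ subject only to the ``topological'' relations: the snake/zig-zag identities making $\beta,\beta^t$ into a duality, naturality of $\tau$ together with the braid/symmetry relations $\tau^2=\II_2$, compatibility of $\tau$ with $\beta$ and $\mu$, and the refinement relations (valency-$2$ nodes on an edge can be erased) — this is precisely the content of the graphical calculus set up in \cite{CKR} and \cite{Boo98}. So it suffices to check that the images under $\cR_\frV$ satisfy the corresponding identities in $\cV$. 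The snake identities $(\mathsf{b}\otimes 1_\VV)\circ(1_\VV\otimes\mathsf{b}^t)=1_\VV=(1_\VV\otimes\mathsf{b})\circ(\mathsf{b}^t\otimes 1_\VV)$ are exactly the statement that $\{u_i\},\{v_i\}$ are dual bases, which was established just before Proposition \ref{pr:tPhiPsi}; the relation $\mathsf{b}\circ\tau=\mathsf{b}$ is symmetry of $\mathsf{b}$; the relation expressing that $\mu$ slides past $\tau$ correctly (i.e.\ the multiplication node is invariant under the cyclic rotation of its three legs) is precisely the associativity of the bilinear form, $\mathsf{b}(\mm(x,y),z)=\mathsf{b}(x,\mm(y,z))$; and $\tau^2=\id$ is clear. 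That the image of $\mu^t$ built as in \eqref{eq:transpose}/\eqref{eq:gamma_transpose} agrees with $\mm^t$ follows from Proposition \ref{pr:tPhiPsi}(iii). Finally, the compatibility $\cR_\frV(\gamma^t)=\cR_\frV(\gamma)^t$ and $\cR_\frV(\gamma\sqcup\delta)=\cR_\frV(\gamma)\otimes\cR_\frV(\delta)$ for general morphisms then follows by induction on the length of a word, using the functoriality of $({-})^t$ on composition/disjoint union (the Proposition after Definition \ref{df:transpose}) and formula \eqref{eq:gamma_transpose}.

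The main obstacle I anticipate is not any single calculation but rather pinning down a \emph{complete} and \emph{correct} list of generating relations for $\cT$ so that the case analysis in the well-definedness step is genuinely exhaustive — in particular making sure the refinement equivalence (erasing valency-$2$ nodes) and the chosen convention for transpose-by-rotation (as opposed to the reflection used in \cite{CKR}) are handled, since the rotation convention is what forces us to use only positively oriented trivalent nodes and interacts with how $\tau$ threads through $\mu$. Since this bookkeeping is carried out in detail in \cite[Theorem 3.1]{CKR} and \cite{Boo98}, the cleanest route for the paper is to cite those sources for the combinatorial presentation of $\cT$ and to supply here only the verification that $\mathsf{b}$ and $\mm$ satisfy the algebraic avatars of those relations — nondegeneracy giving the dualities, symmetry giving the $\tau$-invariance of $\beta$, and associativity of the form giving the rotational invariance of $\mu$ — which is exactly what the hypotheses on $\frV=(\VV,\mathsf{b},\mm)$ provide.
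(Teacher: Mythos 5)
The paper does not prove Theorem \ref{th:RV} at all --- it is quoted from \cite[Theorem 3.1]{CKR} and \cite{Boo98}, with only the remark that symmetry and associativity of $\mathsf{b}$ are the key inputs. Your outline is the standard argument behind that cited result and is consistent with the paper's treatment: you correctly reduce existence to well-definedness against a presentation of $\cT$ by generators and relations, and you correctly match each relation to its algebraic counterpart (nondegeneracy giving the snake identities, symmetry giving $\mathsf{b}\circ\tau=\mathsf{b}$, associativity giving the cyclic invariance of the trivalent node), while honestly deferring the exhaustive combinatorial presentation to \cite{CKR} and \cite{Boo98} --- which is exactly what the paper itself does.
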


The symmetry and associativity of the bilinear form $\mathsf{b}$ play  a key role here.   

The empty $3$-tangle $[0]\rightarrow [0]$ corresponds under $\cR_{\frV}$ to the identity map in $\Hom_\FF(\FF,\FF)\cong\FF$. We will denote it simply by $1$.

\begin{remark}\label{re:bbt2}
By Remark \ref{re:bbt},
\[
\cR_{\frV}(\beta\circ\beta^t)=\dimm_\FF \VV\in\FF\cong\Hom_\FF(\FF,\FF).
\]
\end{remark}

For any triple $\frV=(\VV,\mathsf{b},\mm)$ as above, we have the \emph{alphabet} of basic tensors $1_\VV=\id:\VV\rightarrow \VV$, $\mathsf{b}: \VV^{\otimes 2}\rightarrow \FF$, $\mm:\VV^{\otimes 2}\rightarrow \VV$, $\tau: \VV^{\otimes 2}\rightarrow \VV^{\otimes 2}$, $\mathsf{b}^t:\FF\rightarrow \VV^{\otimes 2}$, and $\mm^t:\VV\rightarrow \VV^{\otimes 2}$. Let   
\[
\Gamma_{\frV}=\{\mathsf{c}_i\in \Hom_\FF(\VV^{\otimes n_i},\VV^{\otimes m_i}) : i=1,\ldots,k\}
\]
be a finite set of homomorphisms. The algebra $\frV$ is said to be \emph{of tensor type $\Gamma_{\frV}$} if the $\mathsf{c}_i$'s are identities for $\frV$, i.e., if
\[
\mathsf{c}_i(x_1\otimes\cdots\otimes x_{n_i})=0
\]
for all $i=1,\ldots,k$, and all $x_1,\ldots,x_{n_i}\in \VV$.

\begin{corollary}[{\cite[Corollary 3.5]{CKR}}]\label{co:RGamma}
Let $\frV=(\VV,\mathsf{b},\mm)$ be an algebra of tensor type $\Gamma_{\frV}=\{\mathsf{c}_i: i=1,\ldots,k\}$, for tensors $\mathsf{c}_i$ expressible in terms of  the alphabet, $1_\VV$, $\mm$, $\mm^t$, $\mathsf{b}$, $\mathsf{b}^t$, and $\tau$. Let $\Gamma=\{\gamma_i: i=1,\ldots,k\}$ with $\cR_{\frV}(\gamma_i)=\mathsf{c}_i$ for all $i=1,\ldots,k$. Then there is a unique functor $\cR_\Gamma:\cT_{\Gamma}\rightarrow \cV$ such that $\cR_{\cV}=\cR_{\Gamma}\circ\cP$, where $\cP$ is the natural projection $\cT \rightarrow \cT_{\Gamma}$.
\end{corollary}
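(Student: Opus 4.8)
The plan is to show that the monoidal functor $\cR_{\frV}\colon\cT\to\cV$ of Theorem~\ref{th:RV} annihilates every relation subspace $\R_\Gamma([n],[m])$, and therefore descends through the quotient category $\cT_\Gamma$.

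First I would record the key vanishing. Since $\frV$ is of tensor type $\Gamma_{\frV}$, each $\mathsf{c}_i\in\Hom_\FF(\VV^{\otimes n_i},\VV^{\otimes m_i})$ is an identity for $\frV$; as $\VV^{\otimes n_i}$ is spanned by the decomposable tensors $x_1\otimes\cdots\otimes x_{n_i}$ on which $\mathsf{c}_i$ vanishes, we get $\mathsf{c}_i=0$ as a linear map. Hence, by hypothesis, $\cR_{\frV}(\gamma_i)=\mathsf{c}_i=0$ for every $i=1,\dots,k$.

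Next I would propagate this vanishing through the operations defining $\R_\Gamma$. By construction $\R_\Gamma([n],[m])$ is the $\FF$-span of $3$-tangles produced from some $\gamma_i$ by forming disjoint unions with arbitrary $3$-tangles and composing, on either side, with arbitrary $3$-tangles; using the interchange law each such generator can be written as a finite composite of disjoint unions of pieces of the form $\delta\circ(\II_a\sqcup\gamma_i\sqcup\II_b)\circ\delta'$. Applying $\cR_{\frV}$, which is a functor (so $\cR_{\frV}(\alpha\circ\beta)=\cR_{\frV}(\alpha)\circ\cR_{\frV}(\beta)$) satisfying $\cR_{\frV}(\gamma\sqcup\delta)=\cR_{\frV}(\gamma)\otimes\cR_{\frV}(\delta)$ by Theorem~\ref{th:RV}(3), the image of any such generator has the zero map $\cR_{\frV}(\gamma_i)=0$ as one of its tensor factors or one of its composition factors, hence is itself $0$. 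By $\FF$-linearity, $\cR_{\frV}\bigl(\R_\Gamma([n],[m])\bigr)=0$ for all $n,m$. This bookkeeping step — checking that every element of the categorical ideal $\R_\Gamma$ really is a linear combination of such ``sandwiched'' morphisms, so that the monoidal functoriality of $\cR_\frV$ genuinely forces the vanishing on all of $\R_\Gamma$ — is the only point deserving care, and it is exactly where the monoidal structure from Theorem~\ref{th:RV}(3) is essential.

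Finally I would deduce the factorization and functoriality. For each pair $n,m$ the linear map $\cR_{\frV}\colon\Mor_{\cT}([n],[m])\to\Hom_\FF(\VV^{\otimes n},\VV^{\otimes m})$ kills $\R_\Gamma([n],[m])$, so it induces a unique linear map $\cR_\Gamma$ on $\Mor_{\cT_\Gamma}([n],[m])=\Mor_{\cT}([n],[m])/\R_\Gamma([n],[m])$ with $\cR_\Gamma\circ\cP=\cR_{\frV}$; on objects we set $\cR_\Gamma([n])=\VV^{\otimes n}$. That $\cR_\Gamma$ is a functor is a short diagram chase: $\cP$ is the identity on objects and surjective on each morphism space, so lifting representatives $f,g$ of morphisms in $\cT_\Gamma$ and using $\cP(g\circ f)=\cP(g)\circ\cP(f)$ gives $\cR_\Gamma\bigl(\cP(g)\circ\cP(f)\bigr)=\cR_{\frV}(g\circ f)=\cR_{\frV}(g)\circ\cR_{\frV}(f)=\cR_\Gamma(\cP(g))\circ\cR_\Gamma(\cP(f))$, and $\cR_\Gamma$ sends $\cP(\II_n)$ to $\cR_{\frV}(\II_n)=1_{\VV^{\otimes n}}$. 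Uniqueness of $\cR_\Gamma$ is immediate from the surjectivity of $\cP$ on morphisms. (One checks just as easily that $\cR_\Gamma$ is monoidal, though this is not required by the statement.)
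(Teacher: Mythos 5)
Your proof is correct: the vanishing $\cR_{\frV}(\gamma_i)=\mathsf{c}_i=0$, the propagation of this vanishing through $\circ$ and $\sqcup$ to all of $\R_\Gamma([n],[m])$ via the monoidal functoriality of $\cR_{\frV}$, and the passage to the quotient with uniqueness from surjectivity of $\cP$ is exactly the intended argument. The paper itself gives no proof here --- it cites \cite[Corollary~3.5]{CKR} --- and your write-up supplies the standard argument that reference uses, with the one point genuinely deserving care (that every element of the categorical ideal $\R_\Gamma$ is a linear combination of expressions containing some $\gamma_i$ as a tensor or composition factor) correctly identified and handled.
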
  
%---------------------------
%
%    dimension 7
%
%--------------------------    
\section{Seven-dimensional cross products}\label{se:7dim}

\subsection{Cross products} \ \ 
Let $\FF$ be a field of characteristic  $\ne 2$,  and let $(\VV,\mathsf{b})$ be a vector space $\VV$ over $\FF$ equipped with a nondegenerate symmetric bilinear form $\mathsf{b}$. A \emph{cross product} on $(\VV,\mathsf{b})$ is a bilinear multiplication $\VV\times \VV\rightarrow \VV$, $(x,y)\mapsto x\times y$, such that:
\begin{subequations}\label{eq:CP}
\begin{align}
& \mathsf{b}( x\times y,x)=0,\label{eq:CP1}\\
& x\times x=0,\label{eq:CP2}\\
& \mathsf{b}( x\times y,x\times y)=\mathsf{b}( x,x)\mathsf{b}( y,y)-\mathsf{b}( x,y)\mathsf{b}( y,x),\label{eq:CP3}
%\begin{vmatrix} \mathsf{b}( x,x)&\mathsf{b}( x,y)\\ \mathsf{b}( x,y)&\mathsf{b}( y,x)\end{vmatrix},
\end{align}
\end{subequations}  
for any $x,y\in \VV$. It is well known that a nonzero cross product exists only if $\dimm_\FF \VV=3$ or $7$. (See \cite{BrownGray}. For a proof using $3$-tangles,  see \cite{Rost04}.)

Since the characteristic is $\ne 2$, the equations in \eqref{eq:CP} are equivalent to:
\begin{subequations}\label{eq:CP7}
\begin{align}
&\mathsf{b}( x\times y,z)=\mathsf{b}( x,y\times z),\label{eq:CP7_1}\\
&x\times y=-y\times x,\label{eq:CP7_2}\\
&\mathsf{b}( x\times y,z\times t)+\mathsf{b}( y\times z,t\times x) \ = \label{eq:CP7_3}\\[-2pt]
&\qquad 2\mathsf{b}( x,z)\mathsf{b}( y,t)-\mathsf{b}( x,y)\mathsf{b}( z,t)-\mathsf{b}( y,z)\mathsf{b}( x,t),\nonumber
\end{align}
\end{subequations}
for any $x,y,z,t\in \VV$. Equation \eqref{eq:CP7_1} shows that $\mathsf{b}$ is associative relative to the cross product, so that $\frV=(\VV,\mathsf{b},\times)$ satisfies the conditions in Section \ref{se:3tangles}, and \eqref{eq:CP7_2} and \eqref{eq:CP7_3} can be expressed as
\[
\mathsf{c}_1(x_1\otimes x_2)=0=\mathsf{c}_2(x_1\otimes x_2\otimes x_3 \otimes x_4)
\]
for any $x_1,x_2,x_3, x_4\in \VV$, where
\begin{equation}\label{eq:c1_c2}
\begin{split}
&\mathsf{c}_1(x_1\otimes x_2)=x_1\times x_2+x_2\times x_1, \\
&\mathsf{c}_2(x_1\otimes x_2\otimes x_3\otimes x_4)= \mathsf{b}( x_1\times x_2,x_3\times x_4)+\mathsf{b}( x_2\times x_3,x_4\times x_1) \\[-2pt]
&\qquad -2\mathsf{b}( x_1,x_3)\mathsf{b}( x_2,x_4)+\mathsf{b}( x_1,x_2)\mathsf{b}( x_3,x_4) +\mathsf{b}( x_2,x_3)\mathsf{b}( x_1,x_4).
\end{split}
\end{equation}
 Thus,  $\mathsf{c}_1\in\Hom_\FF(\VV^{\otimes 2},\VV)$ and $\mathsf{c}_2\in\Hom_\FF(\VV^{\otimes 4},\FF)$, and they are the images under $\cR_\frV$ of $\gamma_1\in\Mor_{\cT}([2],[1])$ and $\gamma_2' \in\Mor_{\cT_2}([4],[0])$, with

\begin{equation}\label{eq:gamma1}
\gamma_1:\quad 
\tik{0.5}{%
\draw[very thin] (0,0) -- (3,0);
\path (1,0) coordinate (a1);
\path (2,0) coordinate (a2);
\draw[very thin] (0,-2) -- (3,-2);
\path (1.5,-2) coordinate (b1);
\path (1.5,-1) coordinate (c1);
\draw[thick] (a2) to (c1);
\draw[thick] (a1) to (c1);
\draw[thick] (c1) to (b1);
}
\quad + \quad
\tik{0.5}{%
\path (1,0) coordinate (a1);
\path (2,0) coordinate (a2);
\draw[very thin] (0,-2) -- (3,-2);
\path (1.5,-2) coordinate (b1);
\path (1.5,-1) coordinate (c1);
\draw[thick] (a2) .. controls (1,-0.6) and (1,-0.6) .. (c1);
\draw[cross] (a1) .. controls (2,-0.6) and (2,-0.6) .. (c1);
\draw[very thin] (0,0) -- (3,0);
\draw[thick] (c1) to (b1);
},
\end{equation}
($\gamma_1=\mu+\mu\circ\tau\in\Mor_{\cT_2}([2],[1])$), and
\[
\gamma_2':\quad
\tik{0.4}{%
\path (1,0) coordinate (a1);
\path (2,0) coordinate (a2);
\path (3,0) coordinate (a3);
\path (4,0) coordinate (a4);
\path (1.5,-0.5) coordinate (b1);
\path (3.5,-0.5) coordinate (b2);
\path (2.5,-1.5) coordinate (c);
\draw[thick] (a1) .. controls (1,-0.3) and (1.2,-0.5) .. (b1);
\draw[thick] (a2) .. controls (2,-0.3) and (1.8,-0.5) .. (b1);
\draw[thick] (a3) .. controls (3,-0.3) and (3.2,-0.5) .. (b2);
\draw[thick] (a4) .. controls (4,-0.3) and (3.8,-0.5) .. (b2);
\draw[thick] (b1) .. controls (1.5,-1) and (3.5,-1) .. (b2);
\draw[very thin] (0.5,0)-- (4.5,0);
}+
\tik{0.4}{%
\path (1,0) coordinate (a1);
\path (2,0) coordinate (a2);
\path (3,0) coordinate (a3);
\path (4,0) coordinate (a4);
\path (2.5,-0.5) coordinate (b1);
\path (2.5,-1) coordinate (b2);
\path (2.5,-1.5) coordinate (c);
\draw[thick] (a2) .. controls (2,-0.3) and (2.2,-0.5) .. (b1);
\draw[thick] (a3) .. controls (3,-0.3) and (2.8,-0.5) .. (b1);
\draw[thick] (a1) .. controls (1,-0.6) and (1.5,-1) .. (b2);
\draw[thick] (a4) .. controls (4,-0.6) and (3.5,-1) .. (b2);
\draw[thick] (b1) to (b2);
\draw[very thin] (0.5,0)-- (4.5,0);
}-2
\tik{0.4}{%
\path (1,0) coordinate (a1);
\path (2,0) coordinate (a2);
\path (3,0) coordinate (a3);
\path (4,0) coordinate (a4);
%
%\path (1.5,-0.5) coordinate (b1);
%\path (3.5,-0.5) coordinate (b2);
%
\path (2.5,-1.5) coordinate (c);
\draw[thick] (a2) .. controls (2,-0.8) and (4,-0.8) .. (a4);
\draw[cross] (a1) .. controls (1,-0.8) and (3,-0.8) .. (a3);
\draw[very thin] (0.5,0)-- (4.5,0);
}+
\tik{0.4}{%
\path (1,0) coordinate (a1);
\path (2,0) coordinate (a2);
\path (3,0) coordinate (a3);
\path (4,0) coordinate (a4);
\path (1.5,-0.5) coordinate (b1);
\path (3.5,-0.5) coordinate (b2);
\path (2.5,-1.5) coordinate (c);
\draw[thick] (a1) .. controls (1,-0.3) and (1.2,-0.5) .. (b1);
\draw[thick] (a2) .. controls (2,-0.3) and (1.8,-0.5) .. (b1);
\draw[thick] (a3) .. controls (3,-0.3) and (3.2,-0.5) .. (b2);
\draw[thick] (a4) .. controls (4,-0.3) and (3.8,-0.5) .. (b2);
\draw[very thin] (0.5,0)-- (4.5,0);
}+
\tik{0.4}{%
\path (1,0) coordinate (a1);
\path (2,0) coordinate (a2);
\path (3,0) coordinate (a3);
\path (4,0) coordinate (a4);
\path (2.5,-0.5) coordinate (b1);
\path (2.5,-1) coordinate (b2);
\path (2.5,-1.5) coordinate (c);
\draw[thick] (a2) .. controls (2,-0.3) and (2.2,-0.5) .. (b1);
\draw[thick] (a3) .. controls (3,-0.3) and (2.8,-0.5) .. (b1);
\draw[thick] (a1) .. controls (1,-0.6) and (1.5,-1) .. (b2);
\draw[thick] (a4) .. controls (4,-0.6) and (3.5,-1) .. (b2);
\draw[very thin] (0.5,0)-- (4.5,0);
}
\]

Using $\Psi_{2,2}$ we may substitute $\gamma_2'$ with

\begin{equation}\label{eq:gamma2}
\gamma_2:\quad
\tik{0.5}{%
\path (1,0) coordinate (a1);
\path (3,0) coordinate (a2);
\path (1,-2) coordinate (b1);
\path (3,-2) coordinate (b2);
\path (2,-0.5) coordinate (c1);
\path (2,-1.5) coordinate (c2);
\draw[thick] (a1) .. controls (1,-0.3) and (1.7,-0.5) .. (c1);
\draw[thick] (a2) .. controls (3,-0.3) and (2.3,-0.5) .. (c1);
\draw[thick] (b1) .. controls (1,-1.7) and (1.7,-1.5) .. (c2);
\draw[thick] (b2) .. controls (3,-1.7) and (2.3,-1.5) .. (c2);
\draw[thick] (c1) to (c2);
\draw[very thin] (0.5,0)-- (3.5,0);
\draw[very thin] (0.5,-2)-- (3.5,-2);
}+
\tik{0.5}{%
\path (1,0) coordinate (a1);
\path (3,0) coordinate (a2);
\path (1,-2) coordinate (b1);
\path (3,-2) coordinate (b2);
\path (1.5,-1) coordinate (c1);
\path (2.5,-1) coordinate (c2);
\draw[thick] (a1) .. controls (1.3,0) and (1.5,-0.7) .. (c1);
\draw[thick] (b1) .. controls (1.3,-2) and (1.5,-1.3) .. (c1);
\draw[thick] (a2) .. controls (2.7,0) and (2.5,-0.7) .. (c2);
\draw[thick] (b2) .. controls (2.7,-2) and (2.5,-1.3) .. (c2);
\draw[thick] (c1) to (c2);
\draw[very thin] (0.5,0)-- (3.5,0);
\draw[very thin] (0.5,-2)-- (3.5,-2);
}-2
\tik{0.5}{%
\path (1,0) coordinate (a1);
\path (3,0) coordinate (a2);
\path (1,-2) coordinate (b1);
\path (3,-2) coordinate (b2);
\draw[thick] (a2)to (b1);
\draw[cross] (a1) to (b2);
\draw[very thin] (0.5,0)-- (3.5,0);
\draw[very thin] (0.5,-2)-- (3.5,-2);
}+
\tik{0.5}{%
\path (1,0) coordinate (a1);
\path (3,0) coordinate (a2);
\path (1,-2) coordinate (b1);
\path (3,-2) coordinate (b2);
\path (2,-0.5) coordinate (c1);
\path (2,-1.5) coordinate (c2);
\draw[thick] (a1) .. controls (1,-0.3) and (1.7,-0.5) .. (c1);
\draw[thick] (a2) .. controls (3,-0.3) and (2.3,-0.5) .. (c1);
\draw[thick] (b1) .. controls (1,-1.7) and (1.7,-1.5) .. (c2);
\draw[thick] (b2) .. controls (3,-1.7) and (2.3,-1.5) .. (c2);
\draw[very thin] (0.5,0)-- (3.5,0);
\draw[very thin] (0.5,-2)-- (3.5,-2);
}+
\tik{0.5}{%
\path (1,0) coordinate (a1);
\path (3,0) coordinate (a2);
\path (1,-2) coordinate (b1);
\path (3,-2) coordinate (b2);
\path (1.5,-1) coordinate (c1);
\path (2.5,-1) coordinate (c2);
\draw[thick] (a1) .. controls (1.3,0) and (1.5,-0.7) .. (c1);
\draw[thick] (b1) .. controls (1.3,-2) and (1.5,-1.3) .. (c1);
\draw[thick] (a2) .. controls (2.7,0) and (2.5,-0.7) .. (c2);
\draw[thick] (b2) .. controls (2.7,-2) and (2.5,-1.3) .. (c2);
\draw[very thin] (0.5,0)-- (3.5,0);
\draw[very thin] (0.5,-2)-- (3.5,-2);
},
\end{equation}

\noindent where $\gamma_2\in\Mor_{\cT}([2],[2])$.  
 
 Also, Remarks \ref{re:bbt} and \ref{re:bbt2} show that $(\mathsf{b}\circ \mathsf{b}^t)(1)=\dimm_\FF \VV\ne 0$ and  
\begin{equation}\label{eq:c0}
\mathsf{c}_0 =(\mathsf{b}\circ \mathsf{b}^t)(1)- \dimm_\FF \VV\, \in \, \FF\simeq \Hom_\FF(\FF,\FF)
\end{equation}
is the image under $\cR_\frV$ of
\begin{equation}\label{eq:gamma0}
\gamma_0=\beta\circ\beta^t-(\dimm_\FF \VV)1=
\tik{0.5}{%
\draw[thick] (0,0) circle (0.5cm);
%\path (0,-1) coordinate (origin); %in order to center the circle
}
\ - (\dimm_\FF\VV)1,\qquad \gamma_0\in\Mor_{\cT}([0],[0]).
\end{equation}

%Therefore,  if $\frV=(\VV,\mathsf{b},\times)$ for a vector space $\VV$ endowed with 
%a nonzero cross product $\times$ relative to the nondegenerate symmetric bilinear 
%form $\mathsf{b}$, then $\frV$   
%is of tensor type  $\{\mathsf{c}_0,\mathsf{c}_1,\mathsf{c}_2\}$, for $\mathsf{c}_0$ 
%in \eqref{eq:c0} and $\mathsf{c}_1$ and $\mathsf{c}_2$ in \eqref{eq:c1_c2}, and hence,  
%Corollary \ref{co:RGamma} gives a functor $\cR_\Gamma:\cT_\Gamma\rightarrow \cV$, 
%with $\Gamma=\{\gamma_0,\gamma_1,\gamma_2\}$, for $\gamma_0$ in \eqref{eq:gamma0}, 
%$\gamma_1$ in \eqref{eq:gamma1}, and $\gamma_2$ in \eqref{eq:gamma2}.

%{\mg An alternative:

Therefore, we have the following consequence of Corollary \ref{co:RGamma}. 
\begin{proposition}  
If  $\frV=(\VV,\mathsf{b},\times)$ for a vector space $\VV$ endowed with a nonzero cross product $\times$ relative to the nondegenerate symmetric bilinear form $\mathsf{b}$, then $\frV$   
is of tensor type  $\{\mathsf{c}_0,\mathsf{c}_1,\mathsf{c}_2\}$, for $\mathsf{c}_0$ in \eqref{eq:c0} and $\mathsf{c}_1$ and $\mathsf{c}_2$ in \eqref{eq:c1_c2}, and hence the functor $\cR_\frV$ induces a functor $\cR_\Gamma:\cT_\Gamma\rightarrow \cV$, with $\Gamma=\{\gamma_0,\gamma_1,\gamma_2\}$, for $\gamma_0$ in \eqref{eq:gamma0}, $\gamma_1$ in \eqref{eq:gamma1}, and $\gamma_2$ in \eqref{eq:gamma2}.  
\end{proposition}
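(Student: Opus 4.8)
The plan is to verify the hypotheses of Corollary~\ref{co:RGamma} for the algebra $\frV=(\VV,\mathsf{b},\times)$ and the set $\Gamma=\{\gamma_0,\gamma_1,\gamma_2\}$, so that the promised functor $\cR_\Gamma$ is produced directly by that corollary. The first thing I would do is record that $\frV$ meets the standing assumptions of Section~\ref{se:3tangles}: $\VV$ is finite-dimensional, $\mathsf{b}$ is nondegenerate and symmetric by hypothesis, and the form is \emph{associative} with respect to the cross product. This last point is exactly \eqref{eq:CP7_1}, $\mathsf{b}(x\times y,z)=\mathsf{b}(x,y\times z)$, which is one of the three reformulations established just above the statement; since $\charac\FF\ne 2$, the system \eqref{eq:CP} is equivalent to \eqref{eq:CP7}, so this identity is available. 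Hence Theorem~\ref{th:RV} applies and the functor $\cR_\frV:\cT\rightarrow\cV$ exists, with $\cR_\frV(\beta)=\mathsf{b}$, $\cR_\frV(\mu)=\times$ (here $\mm=\times$), $\cR_\frV(\II_1)=1_\VV$, $\cR_\frV(\tau)=\tau$.

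Next I would check that $\frV$ is \emph{of tensor type} $\{\mathsf{c}_0,\mathsf{c}_1,\mathsf{c}_2\}$, i.e.\ that each $\mathsf{c}_i$ vanishes identically on the appropriate tensor power of $\VV$. For $\mathsf{c}_1(x_1\otimes x_2)=x_1\times x_2+x_2\times x_1$ this is precisely anticommutativity \eqref{eq:CP7_2}. For $\mathsf{c}_2$ as in \eqref{eq:c1_c2}, this is a transcription of \eqref{eq:CP7_3}: expanding $2\mathsf{b}(x_1,x_3)\mathsf{b}(x_2,x_4)-\mathsf{b}(x_1,x_2)\mathsf{b}(x_3,x_4)-\mathsf{b}(x_2,x_3)\mathsf{b}(x_1,x_4)=\mathsf{b}(x_1\times x_2,x_3\times x_4)+\mathsf{b}(x_2\times x_3,x_4\times x_1)$ and moving everything to one side gives $\mathsf{c}_2=0$ verbatim. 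Finally, $\mathsf{c}_0=(\mathsf{b}\circ\mathsf{b}^t)(1)-\dimm_\FF\VV$ vanishes because, by Remark~\ref{re:bbt} (or equivalently Remark~\ref{re:bbt2}), $(\mathsf{b}\circ\mathsf{b}^t)(1)=\sum_{i=1}^d\mathsf{b}(v_i,u_i)=\dimm_\FF\VV$ since $\mathsf{b}$ is symmetric and $\{u_i\},\{v_i\}$ are dual bases. Thus all three $\mathsf{c}_i$ are identities for $\frV$.

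It remains only to observe that each $\mathsf{c}_i$ is expressible in terms of the alphabet $1_\VV,\times,\times^t,\mathsf{b},\mathsf{b}^t,\tau$, with a chosen $3$-tangle preimage under $\cR_\frV$. For $\gamma_1=\mu+\mu\circ\tau$ this is immediate from $\cR_\frV(\mu)=\times$ and $\cR_\frV(\tau)=\tau$, so $\cR_\frV(\gamma_1)=\mathsf{c}_1$; for $\gamma_0=\beta\circ\beta^t-(\dimm_\FF\VV)\II_0$ this is immediate from $\cR_\frV(\beta)=\mathsf{b}$ and functoriality (applied to the empty tangle $\II_0$), so $\cR_\frV(\gamma_0)=\mathsf{c}_0$. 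For $\mathsf{c}_2$, one first exhibits $\gamma_2'\in\Mor_{\cT}([4],[0])$ built from $\mu$, $\mu\circ\tau$, $\beta$ and $\beta^t$ so that $\cR_\frV(\gamma_2')=\mathsf{c}_2$ (reading off the five pictured diagrams term by term against the five terms of \eqref{eq:c1_c2}), and then applies the bijection $\Psi_{2,2}$ of \eqref{eq:Psinm_bis} and \eqref{eq:Phinm}--\eqref{eq:Psinm}: since $\cR_\frV$ intertwines $\Phi,\Psi$ with $\Phi_{n,m},\Psi_{n,m}$ (Proposition~\ref{pr:tPhiPsi}), $\gamma_2:=\Psi_{2,2}(\gamma_2')\in\Mor_{\cT}([2],[2])$ satisfies $\cR_\frV(\gamma_2)=\Psi_{2,2}(\mathsf{c}_2)$, which is again a tensor identity for $\frV$ precisely when $\mathsf{c}_2$ is. With $\cR_\frV(\gamma_i)=\mathsf{c}_i$ for $i=0,1,2$ now in hand, Corollary~\ref{co:RGamma} yields the unique functor $\cR_\Gamma:\cT_\Gamma\rightarrow\cV$ with $\cR_\frV=\cR_\Gamma\circ\cP$, completing the proof. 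The only mildly delicate point is the bookkeeping in matching the five summands of $\gamma_2'$ (and their images) to the five terms of $\mathsf{c}_2$, including the $-2$ coefficient on the crossing term; everything else is a direct appeal to results already established.
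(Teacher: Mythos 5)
Your proposal is correct and follows essentially the same route as the paper, which presents this proposition as an immediate consequence of Corollary \ref{co:RGamma} after the surrounding text has verified associativity of $\mathsf{b}$ via \eqref{eq:CP7_1}, identified $\mathsf{c}_1,\mathsf{c}_2$ as transcriptions of \eqref{eq:CP7_2}--\eqref{eq:CP7_3}, used Remarks \ref{re:bbt} and \ref{re:bbt2} for $\mathsf{c}_0$, and passed from $\gamma_2'$ to $\gamma_2$ via $\Psi_{2,2}$. Your bookkeeping of the tangle preimages and the vanishing of each $\mathsf{c}_i$ matches the paper's argument.
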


Equation \eqref{eq:CP3} is equivalent, by the associativity and nondegeneracy of $\mathsf{b}$ and the anticommutativity of $\times$, to the equation:
\begin{equation}
(x\times y)\times z+x\times (y\times z)=2\mathsf{b}( x,z)y-\mathsf{b}( x,y)z-\mathsf{b}( y,z)x,
\end{equation}   
for any $x,y,z\in \VV$. From here it is easy to check that any automorphism of $(\VV,\times)$ is an isometry of $(\VV,\mathsf{b})$. Then $\mathsf{b}$, $\mathsf{b}^t$, $\tau$ and $\times$ are invariant under the action of $\Aut(\VV,\times)$, which is a simple algebraic group of type $\GG_2$ if $\dimm_\FF \VV=7$  and is the special orthogonal group $\SO(\VV,\mathsf{b})$ if $\dimm_\FF \VV = 3$. 

\subsection{Dimension 7 cross products} \ \ 
Suppose now that $\dimm_\FF \VV=7$.
Then the image $\cR_\Gamma\bigl(\Mor_{\cT_\Gamma}([n],[m])\bigr)$, which lies in $\Hom_\FF(\VV^{\otimes n},\VV^{\otimes m})$, is actually contained in $\Hom_{\Aut(\VV,\times)}(\VV^{\otimes n},\VV^{\otimes m})$, the subspace of linear maps invariant under the action of $\Aut(\VV,\times)$.

\begin{theorem}\label{th:G2}
Let $\FF$ be an infinite field of characteristic  $\ne 2,3$, 
and let $\frV=(\VV,\mathsf{b},\times)$ be a 7-dimensional vector space $\VV$ endowed with a nonzero cross product $x\times y$ relative to the nondegenerate symmetric bilinear form $\mathsf{b}$.   For any $n,m\in\NN$, 
\[
\cR_\Gamma\bigl(\Mor_{\cT_\Gamma}([n],[m])\bigr)=\Hom_{\Aut(\VV,\times)}(\VV^{\otimes n},\VV^{\otimes m}).
\]
\end{theorem}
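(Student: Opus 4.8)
\textit{Proof proposal.}
The inclusion $\cR_\Gamma\bigl(\Mor_{\cT_\Gamma}([n],[m])\bigr)\subseteq\Hom_{\Aut(\VV,\times)}(\VV^{\otimes n},\VV^{\otimes m})$ has already been observed (each of $\mathsf{b}$, $\mathsf{b}^t$, $\tau$, $\times$ is fixed by $\Aut(\VV,\times)$), so the plan is to prove the reverse inclusion, and the first move is to reduce to the case $m=0$. The bijections $\Phi_{n,m},\Psi_{n,m}$ of Proposition \ref{pr:tPhiPsi} are assembled solely from $\mathsf{b}$, $\mathsf{b}^t$ and identity maps, all of which are $\Aut(\VV,\times)$-equivariant; hence they restrict to mutually inverse isomorphisms between $\Hom_{\Aut(\VV,\times)}(\VV^{\otimes n},\VV^{\otimes m})$ and $\Hom_{\Aut(\VV,\times)}(\VV^{\otimes(n+m)},\FF)$, and by construction they are intertwined by $\cR_\Gamma$ with the categorical bijections \eqref{eq:Phinm_bis}, \eqref{eq:Psinm_bis}, which themselves descend to $\cT_\Gamma$ because they amount to composition and disjoint union with fixed $3$-tangles (so preserve $\R_\Gamma$). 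Since $\cR_\Gamma\circ\cP=\cR_\frV$ with $\cP\colon\cT\to\cT_\Gamma$ the projection, one has $\cR_\Gamma\bigl(\Mor_{\cT_\Gamma}([N],[0])\bigr)=\cR_\frV\bigl(\Mor_{\cT}([N],[0])\bigr)$, the latter being the $\FF$-span of the maps $\cR_\frV(\gamma)$ over all $3$-tangles $\gamma\colon[N]\to[0]$. Thus it suffices to show that every $\Aut(\VV,\times)$-invariant linear functional $h\colon\VV^{\otimes N}\to\FF$ is an $\FF$-linear combination of such $\cR_\frV(\gamma)$.

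The next step is to invoke the First Fundamental Theorem of invariant theory for $\GG_2$. One uses that $(\VV,\mathsf{b},\times)$ is, over the algebraic closure, the imaginary part of a split octonion algebra, with $\mathsf{b}$ its norm and $x\times y$ the imaginary component of the octonion product, so $\Aut(\VV,\times)$ is the automorphism group $\GG_2$ of that algebra. Over an infinite field of characteristic $\ne 2,3$, the FFT (\cite{Schwarz}; see also \cite{Weyl}, \cite{ZSh}) asserts that the space of multilinear $\GG_2$-invariants $\VV^{\otimes N}\to\FF$ is spanned by the maps
\[
v_1\otimes\cdots\otimes v_N\ \longmapsto\ \Bigl(\prod_{\{i,j\}\in P_2}\mathsf{b}(v_i,v_j)\Bigr)\Bigl(\prod_{(k,l,p)\in P_3}\mathsf{b}(v_k\times v_l,v_p)\Bigr),
\]
where $\{1,\dots,N\}$ is partitioned into a family $P_2$ of two-element blocks and a family $P_3$ of (ordered) three-element blocks; since the trilinear form $(x,y,z)\mapsto\mathsf{b}(x\times y,z)$ is alternating (by \eqref{eq:CP7_1} together with anticommutativity of $\times$), the chosen ordering of a three-block affects only a sign.

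It then remains to recognize each such spanning map as $\cR_\frV(\gamma)$ for a suitable $3$-tangle $\gamma\colon[N]\to[0]$: a two-block $\{i,j\}$ contributes a cap joining strands $i$ and $j$ (an instance of $\beta$, as $\cR_\frV(\beta)=\mathsf{b}$), an ordered three-block $(k,l,p)$ contributes the trivalent morphism $\beta\circ(\mu\sqcup\II_1)$ on the corresponding strands (since $\cR_\frV(\mu)=\times$, this sends $x\otimes y\otimes z$ to $\mathsf{b}(x\times y,z)$), the rearrangement bringing the relevant strands together is realized by a composite of crossings $\tau$, and the product over all blocks is realized by disjoint union. Hence every multilinear $\Aut(\VV,\times)$-invariant functional lies in $\cR_\frV\bigl(\Mor_{\cT}([N],[0])\bigr)$, which together with the reduction above and the already-known inclusion yields the asserted equality.

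The main obstacle is having the First Fundamental Theorem for $\GG_2$ available in exactly the generality of the statement — over an arbitrary infinite field of characteristic $\ne 2,3$ rather than merely in characteristic $0$. This is precisely what the hypotheses are for: the split octonion algebra is defined over the prime field, the group $\GG_2$ is smooth so the $\FF$-form of the invariant-theoretic statement follows from the split case over $\overline{\FF}$ by faithfully flat base change, and passing from the generating quadratic and cubic invariants to multilinear ones by polarization requires $2!$ and $3!$ to be invertible. Once a characteristic-free FFT is granted, the remaining steps are routine: the reduction via $\Phi_{n,m},\Psi_{n,m}$ is formal, and the diagrammatic translation of the FFT generators is immediate from the definition of $\cR_\frV$.
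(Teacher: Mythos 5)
Your overall strategy is the same as the paper's: reduce to $m=0$ via $\Phi_{n,m},\Psi_{n,m}$, invoke the First Fundamental Theorem for $\GG_2$, and translate the generating invariants into $3$-tangles. The reduction step and the diagrammatic translation are fine. But your statement of the FFT is wrong, and the error is not cosmetic: the FFT for $\GG_2$ acting on copies of $\VV$ has generators in degrees $2$, $3$ \emph{and} $4$ (in the paper's notation, $\alpha_{ij}$, $\beta_{ijk}$ and the skew-symmetrized quartic $\gamma_{ijkl}$, quoted from \cite{Schwarz} and \cite{ZSh}). Your spanning set — products of $\mathsf{b}(v_i,v_j)$ over $2$-blocks and $\mathsf{b}(v_k\times v_l,v_p)$ over $3$-blocks — omits the quartic generator entirely. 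This already fails at $N=4$: a partition of $\{1,2,3,4\}$ into $2$- and $3$-blocks must be a pair-partition, so your set spans at most the $3$-dimensional space of products $\mathsf{b}(v_{\sigma(1)},v_{\sigma(2)})\mathsf{b}(v_{\sigma(3)},v_{\sigma(4)})$, whereas $\dimm\Hom_{\Aut(\VV,\times)}(\VV^{\otimes 4},\FF)=4$ (this is $c(4)$ in sequence A059710, and corresponds to $\VV^{\otimes 2}$ being multiplicity-free with four irreducible summands). The missing fourth invariant is $\mathsf{b}(v_1\times v_2,v_3\times v_4)$, which by relation \eqref{eq:CP7_3} is \emph{not} a linear combination of the three pairings (only the sum of two such cross-product pairings is).

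The bulk of the paper's proof is precisely the computation you skipped: converting Schwarz's trace-form generators on the Cayley algebra $\Cm=\FF 1\oplus\VV$ into cross-product language, and in particular showing that the complete skew-symmetrization of $\tr\bigl(v_i(v_j(v_kv_l))\bigr)$ reduces, modulo products of the quadratic invariants, to $\mathsf{b}\bigl((v_i\times v_j)\times v_k,v_l\bigr)=\mathsf{b}(v_i\times v_j,v_k\times v_l)$. The conclusion you want still holds because this quartic invariant is also realizable as $\cR_\frV$ of a $3$-tangle (a connected component with two trivalent nodes), so the gap is repairable — but as written your argument proves surjectivity onto a proper subspace and the proof is incomplete until the degree-$4$ generator is included and translated.
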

\begin{proof}
The algebra $\Cm=\FF 1\oplus \VV$, with multiplication
\[
(\alpha 1+v)(\beta 1+w)=\bigl(\alpha\beta -\mathsf{b}( v,w)\bigr)1+(\alpha w+\beta v+v\times w)
\]
for $\alpha,\beta\in\FF$ and $v,w\in \VV$, is a Cayley algebra over $\FF$. Any $x\in \Cm$ satisfies the Cayley-Hamilton equation:
\[
x^2-\tr(x)x+\norm(x)1=0,
\]
where the \emph{trace} and \emph{norm} are given by
\[
\tr(\alpha 1+v)=2\alpha,\qquad \norm(\alpha 1+v)=\alpha^2+\mathsf{b}( v,v).
\] 
(See \cite[Chapter III]{Schafer} for  basic facts on Cayley algebras.)   

Denote by $\FF[n\VV]$ the algebra of polynomial maps on the Cartesian power $\VV^n$. According to \cite[(3.23)]{Schwarz} and \cite{ZSh}, the First Fundamental Theorem of  Invariant Theory for $\GG_2$ asserts that the algebra of invariants $\FF[n\VV]^{\Aut(\VV,\times)}$ is generated by the polynomial maps    
\[
\begin{split}
(v_1,\ldots,v_n)&\xrightarrow{\alpha_{ij}} \tr(v_iv_j),\\
(v_1,\ldots,v_n)&\xrightarrow{\beta_{ijk}} \tr\bigl(v_i(v_jv_k)\bigr),\\
(v_1,\ldots,v_n)&\xrightarrow{\gamma_{ijkl}} \skews \tr\bigl(v_i(v_j(v_kv_l))\bigr).
\end{split}
\]
Here $\skews f(x_1,x_2,x_3,x_4)=\sum_{\sigma\in \mathsf{S}_4}(-1)^\sigma f(x_{\sigma(1)},x_{\sigma(2)},x_{\sigma(3)},x_{\sigma(4)})$, denotes the complete skew symmetrization of the map $f$ relative to the symmetric group $\mathsf{S}_4$,  and $(-1)^\sigma$ is the
sign of the permutation $\sigma$.  

Let us note that for any $n\in\NN$, $n\geq 2$, and any $v_1,\ldots,v_n\in \VV$,
\[
\alpha_{ij}(v_1,\ldots,v_n)=-2\mathsf{b}( v_i,v_j).
\]
Also,
\[
\begin{split}
v_i(v_jv_k)&=-\mathsf{b}( v_j,v_k)v_i+v_i(v_j\times v_k)\\
 &=-\mathsf{b}( v_j,v_k)v_i-\mathsf{b}( v_i,v_j\times v_k)1+v_i\times (v_j\times v_k),
\end{split}
\]
and since $\tr(v)=0$ for any $v\in \VV$, it follows that for $n\geq 3$,
\[
\beta_{ijk}(v_1,\ldots,v_n)=-\mathsf{b}( v_i,v_j\times v_k).
\]
Finally,
\[
\begin{split}
\tr\bigl(v_i(v_j(v_kv_l))\bigr)&= \tr\bigl((v_iv_j)(v_kv_l)\bigr)\\
 &=\tr\bigl((-\mathsf{b}( v_i,v_j)1+v_i\times v_j)(-\mathsf{b}( v_k,v_l)1+v_k\times v_l)\bigr)\\
 &=2\mathsf{b}( v_i,v_j)\mathsf{b}( v_k,v_l)-2\mathsf{b}( v_i\times v_j,v_k\times v_l).
\end{split}
\]
The skew symmetrization of the first summand is $0$, and because $\mathsf{b}$ is symmetric and $\times$ is anticommutative, the skew symmetrization of \\  $\mathsf{b}( v_i\times v_j,v_k\times v_l)$ is 8 times
\begin{multline*}
\mathsf{b}( v_i\times v_j,v_k\times v_l)+\mathsf{b}( v_j\times v_k,v_i\times v_l)+\mathsf{b}( v_k\times v_i,v_j\times v_l)\\
 =\mathsf{b}\bigl((v_i\times v_j)\times v_l+(v_j\times v_k)\times v_i+(v_k\times v_i)\times v_j,v_l\bigr).
\end{multline*}
But because of \eqref{eq:CP},  we get:
\[
\begin{split}
(v_j\times v_k)\times v_i&=-(v_k\times v_j)\times v_i\\
 &=v_k\times (v_j\times v_i)-2\mathsf{b}( v_i,v_k)v_j+\mathsf{b}( v_j,v_i)v_k+\mathsf{b}( v_j,v_k)v_i\\
 &=(v_i\times v_j)\times v_k+\mathsf{b}( v_i,v_j)v_k-2\mathsf{b}( v_i,v_k)v_j+\mathsf{b}( v_j,v_k)v_i,\\[2pt]
(v_k\times v_i)\times v_j&= -v_k\times (v_i\times v_j)+2\mathsf{b}( v_k,v_j)v_i-\mathsf{b}( v_k,v_i)v_j-\mathsf{b}( v_i,v_j)v_k\\
 &=(v_i\times v_j)\times v_k-\mathsf{b}( v_i,v_j)v_k-\mathsf{b}( v_i,v_k)v_j+2\mathsf{b}( v_j,v_k)v_i.
\end{split}
\]
Hence, the complete skew symmetrization of $\mathsf{b}( v_i\times v_j,v_k\times v_l)$ is $24$ times
\[
\mathsf{b}\bigl((v_i\times v_j)\times v_k,v_l\bigr)-\mathsf{b}( v_i,v_k)\mathsf{b}( v_j,v_l)+\mathsf{b}( v_i,v_l)\mathsf{b}( v_j,v_k).
\]
Therefore, the above quoted results of Schwarz and Zubkov--Shestakov show that the algebra of invariants $\FF[n\VV]^{\Aut(\VV,\times)}$ is generated by the polynomial maps:
\[
\begin{split}
(v_1,\ldots,v_n)&\longrightarrow \mathsf{b}( v_i,v_j),\\
(v_1,\ldots,v_n)&\longrightarrow \mathsf{b}( v_i,v_j\times v_k),\\
(v_1,\ldots,v_n)&\longrightarrow \mathsf{b}( v_i\times v_j,v_k\times v_l),
\end{split}
\]
and we conclude that the space $\Hom_{\Aut(\VV,\times)}(\VV^{\otimes n},\VV^{\otimes m})$,  which is isomorphic to $\Hom_{\Aut(\VV,\times)}(\VV^{\otimes (n+m)},\FF)$, i.e., the space of multilinear maps $\VV^{n+m}\rightarrow \FF$
invariant under the action of $\Aut(\VV,\times)$, is the linear span of the set of linear maps $\VV^{\otimes n}\rightarrow \VV^{\otimes m}$ generated by composition and tensor products by $1_\VV$, $\mathsf{b}$, $\mathsf{b}^t$, $\tau$ and $\times$. This is precisely $\cR_\Gamma\bigl(\Mor_{\cT_\Gamma}([n],[m])\bigr)$. 
\end{proof}

Our goal now is to prove that $\cR_\Gamma$ actually gives a bijection $\Mor_{\cT_\Gamma}([n],[m])\rightarrow \Hom_{\Aut(\VV,\times)}(\VV^{\otimes n},\VV^{\otimes m})$ and to find bases of these spaces. We will start by working in $\cT_\Gamma$.

The morphisms of $\cT_\Gamma$ are the equivalence classes of $3$-tangles modulo the relations in $\Gamma=\{\gamma_0,\gamma_1,\gamma_2\}$, so we will attempt to get a set of $3$-tangles $\gamma: [n]\rightarrow [m]$ that are representatives of a spanning set of $\Mor_{\cT_\Gamma}([n],[m])$. Later on, this spanning set will be shown to be a basis.

We will use the same symbols to denote morphisms in $\cT$ and their images in $\cT_\Gamma$. Our argument will involve using the following steps aimed at 
\begin{equation}\label{eq:steps}
\textbf{eliminating circles, crossings, triangles, squares and pentagons.}
\end{equation}
\begin{itemize}
\item[\textbf{1.}] Since $\gamma_0$ is one of the relations, we can get rid of circles, i.e., any $3$-tangle modulo $\Gamma$ equals a $3$-tangle without circles.  
Also, the relation $\gamma_2$ allows us to suppress any crossings 
$
\tik{0.3}{%
\path (1,0) coordinate (a1);
\path (3,0) coordinate (a2);
\path (1,-2) coordinate (b1);
\path (3,-2) coordinate (b2);
\draw[thick] (a2)to (b1);
\draw[cross] (a1) to (b2);
%
%\draw[very thin] (0.5,0)-- (3.5,0);
%\draw[very thin] (0.5,-2)-- (3.5,-2);
}
$,
i.e., a $3$-tangle with crossings is equivalent modulo $\Gamma$ to a linear span of $3$-tangles without crossings. 

Note that if there are no crossings, a $3$-tangle is the disjoint union of its connected components:
\[
\tik{0.5}{%
\draw[very thin] (0.5,0) -- (7,0);
\draw[very thin] (0.5,-2) -- (7.5,-2);
\path (1,0) coordinate (a1);
\path (2,0) coordinate (a2);
\path (3,0) coordinate (a3);
\path (4,0) coordinate (a4);
\path (5,0) coordinate (a5);
\path (6,0) coordinate (a6);
\path (1,-2) coordinate (b1);
\path (2,-2) coordinate (b2);
\path (3,-2) coordinate (b3);
\path (5,-2) coordinate (b4);
\path (7,-2) coordinate (b5);
\path (1.5,-0.5) coordinate (c1);
\path (1.5,-1.5) coordinate (c2);
\draw[thick] (a1) -- (c1) -- (a2);
\draw[thick] (b1) -- (c2) -- (b2);
\draw[thick] (c1) -- (c2);
\draw[thick] (a3) to [bend right=60] (a4);
\draw[thick] (a5) -- (b3);
\path (6,-0.7) coordinate (t1);
\path (5.5,-1.5) coordinate (t2);
\path (6.5,-1.5) coordinate (t3);
\draw[thick] (t1) -- (t2) -- (t3) -- cycle;
\draw[thick] (a6) to (t1);
\draw[thick] (t2) to (b4);
\draw[thick] (t3) to (b5);
}\ =\ 
\tik{0.5}{%
\draw[very thin] (0.5,0) -- (2.5,0);
\draw[very thin] (0.5,-2) -- (2.5,-2);
\path (1,0) coordinate (a1);
\path (2,0) coordinate (a2);
\path (1,-2) coordinate (b1);
\path (2,-2) coordinate (b2);
\path (1.5,-0.5) coordinate (c1);
\path (1.5,-1.5) coordinate (c2);
\draw[thick] (a1) -- (c1) -- (a2);
\draw[thick] (b1) -- (c2) -- (b2);
\draw[thick] (c1) -- (c2);
}
\quad
\sqcup 
\quad
\tik{0.5}{%
\draw[very thin] (2.5,0) -- (4.5,0);
\draw[very thin] (2.5,-2) -- (4.5,-2);
\path (3,0) coordinate (a3);
\path (4,0) coordinate (a4);
\draw[thick] (a3) to [bend right=60] (a4);
}
\quad
\sqcup 
\quad
\tik{0.5}{%
\draw[very thin] (4.5,0) -- (5.5,0);
\draw[very thin] (4.5,-2) -- (5.5,-2);
\path (5,0) coordinate (a5);
\path (5,-2) coordinate (b3);
\draw[thick] (a5) -- (b3);
}
\quad
\sqcup 
\quad
\tik{0.5}{%
\draw[very thin] (4.5,0) -- (7.5,0);
\draw[very thin] (4.5,-2) -- (7.5,-2);
\path (6,0) coordinate (a6);
\path (5,-2) coordinate (b4);
\path (7,-2) coordinate (b5);
\path (6,-0.7) coordinate (t1);
\path (5.5,-1.5) coordinate (t2);
\path (6.5,-1.5) coordinate (t3);
\draw[thick] (t1) -- (t2) -- (t3) -- cycle;
\draw[thick] (a6) to (t1);
\draw[thick] (t2) to (b4);
\draw[thick] (t3) to (b5);
}
\]

\medskip  

\item[\textbf{2.}] In $\cT_\Gamma$ we have:
\[
\mu\circ\tau\circ\beta^t=\begin{cases}
\mu\circ(\tau\circ\beta^t)=\mu\circ\beta^t&\text{because $\tau\circ\beta^t=\beta^t$,}\\
  (\mu\circ\tau)\circ\beta^t=-\mu\circ\beta^t&\text{because of relation $\gamma_1$.}
  \end{cases}
\]
\[
\tik{0.5}{%
\path (1,0) coordinate (a1);
\path (2,0) coordinate (a2);
\path (1,-1) coordinate (b1);
\path (2,-1) coordinate (b2);
\path (1.5,-1.5) coordinate (c);
\path (1.5,-2) coordinate (d);
\path (1.5,0.5) coordinate (e);
\draw[very thin] (0.5,-1) -- (2.5,-1);
\draw[thick] (b1) to [bend left=80] (b2);
\draw[thick] (b1) to (c);
\draw[thick] (b2) to (c);
\draw[thick] (c) to (d);
}
\quad 
=
\quad
\tik{0.5}{%
\path (1,0) coordinate (a1);
\path (2,0) coordinate (a2);
\path (1,-1) coordinate (b1);
\path (2,-1) coordinate (b2);
\path (1.5,-1.5) coordinate (c);
\path (1.5,-2) coordinate (d);
\path (1.5,0.5) coordinate (e);
\draw[thick] (a2) to (b1);
\draw[cross] (a1) to (b2);
\draw[thick] (a1) to [bend left=80] (a2);
\draw[very thin] (0.5,0) -- (2.5,0);
\draw[very thin] (0.5,-1) -- (2.5,-1);
\draw[thick] (b1) to (c);
\draw[thick] (b2) to (c);
\draw[thick] (c) to (d);
}
\quad
=
\quad -
\tik{0.5}{%
\path (1,0) coordinate (a1);
\path (2,0) coordinate (a2);
\path (1,-1) coordinate (b1);
\path (2,-1) coordinate (b2);
\path (1.5,-0.5) coordinate (c);
\path (1.5,-1.5) coordinate (d);
\path (1.5,0.5) coordinate (e);
\path (1.5,-2) coordinate (f);
\draw[thick] (a1) to [bend left=80] (a2);
\draw[very thin] (0.5,0) -- (2.5,0);
\draw[thick] (a1) to (c);
\draw[thick] (a2) to (c);
\draw[thick] (c) to (d);
}
\]
Therefore, in $\cT_\Gamma$, $\mu\circ\beta^t=0$:
\[
\tik{0.5}{%
\draw[thick] (0,0) -- (0,-1);
\draw[thick] (0,0.5) circle (0.5cm);
\node at (2,0) {$=\ 0$};
}
\]

\medskip
\item[\textbf{3.}] Let us prove next (see \cite{Rost04}) that the following relation holds:
\[
\tik{0.5}{%
\draw[thick] (0,-0.5) -- (0,-1);
\draw[thick] (0,0) circle (0.5cm);
\draw[thick] (0,0.5) -- (0,1);
\node at (2,0) {$=\ -6$};
\draw[thick] (3.5,1) -- (3.5,-1);
}
\]
Indeed, relation $\gamma_2$ gives:
\[
\begin{split}
\tik{0.5}{%
\path (-1,0.7) coordinate (x1);
\path (-1,-0.7) coordinate (x2);
\path (1,1) coordinate (f1);
\path (1,-1) coordinate (f2);
\path (0,0.4) coordinate (m1);
\path (0,-0.4) coordinate (m2);
\draw[thick] (f1) -- (m1) -- (m2) -- (f2);
\draw[thick] (m1) -- (x1);
\draw[thick] (m2) -- (x2);
\draw[thick] (x1) .. controls (-2,1) and (-2,-1) .. (x2);
} &= 
-
\tik{0.5}{%
\path (-1,0.7) coordinate (x1);
\path (-1,-0.7) coordinate (x2);
\path (1,1) coordinate (f1);
\path (1,-1) coordinate (f2);
\path (0.4,0) coordinate (m1);
\path (-0.4,0) coordinate (m2);
\draw[thick] (f1) -- (m1) -- (f2);
\draw[thick] (m1) -- (m2) -- (x1);
\draw[thick] (m2) -- (x2);
\draw[thick] (x1) .. controls (-2,1) and (-2,-1) .. (x2);
}
+2
\tik{0.5}{%
\path (-1,0.7) coordinate (x1);
\path (-1,-0.7) coordinate (x2);
\path (1,1) coordinate (f1);
\path (1,-1) coordinate (f2);
\path (0.4,0) coordinate (m1);
\path (-0.4,0) coordinate (m2);
\draw[thick] (f2) .. controls (m1) and (m1) .. (x1);
\draw[cross] (f1) .. controls (m1) and (m1) .. (x2);
\draw[thick] (x1) .. controls (-2,1) and (-2,-1) .. (x2);
}
-
\tik{0.5}{%
\path (-1,0.7) coordinate (x1);
\path (-1,-0.7) coordinate (x2);
\path (1,1) coordinate (f1);
\path (1,-1) coordinate (f2);
\path (0.4,0) coordinate (m1);
\path (-0.4,0) coordinate (m2);
\draw[thick] (f1) -- (f2);
\draw[thick] (0,0) circle (0.7cm);
}
\ -
\tik{0.5}{%
\path (-1,0.7) coordinate (x1);
\path (-1,-0.7) coordinate (x2);
\path (1,1) coordinate (f1);
\path (1,-1) coordinate (f2);
\path (0.4,0) coordinate (m1);
\path (-0.4,0) coordinate (m2);
\draw[thick] (f1) .. controls (m1) and (m1) .. (x1); 
\draw[thick] (f2) .. controls (m1) and (m1) .. (x2); 
\draw[thick] (x1) .. controls (-2,1) and (-2,-1) .. (x2);
}
\\[10pt]
&=
0\ +\ 2\ 
\tik{0.5}{%
\draw[thick] (0,1) -- (0,-1);
}
\ -\ 
\tik{0.5}{%
\draw[thick] (-0.8,0) circle (0.5cm);
\draw[thick] (0,1) -- (0,-1);
}
\ -\ 
\tik{0.5}{%
\draw[thick] (0,1) -- (0,-1);
}\\[10pt]
&=
(1-\dimm_\FF \VV)\ 
\tik{0.5}{%
\draw[thick] (0,1) -- (0,-1);
}
= -6\  \tik{0.5}{%
\draw[thick] (0,1) -- (0,-1);
}\ .
\end{split}
\]

\medskip
\item[\textbf{4.}] Again with relation $\gamma_2$,  we get:
\[
\begin{split}
\tik{0.5}{%
\path (0,0) coordinate (origin);
\path (150:0.5cm) coordinate (t1);
\path (30:0.5cm) coordinate (t2);
\path (-90: 0.5cm) coordinate (t3);
\path (150:1cm) coordinate (e1);
\path (30:1cm) coordinate (e2);
\path (-90:1cm) coordinate (e3);
\draw[thick] (t1) -- (t2) -- (t3) -- cycle;
\draw[thick] (e1) -- (t1) (e2) -- (t2) (e3) -- (t3);
}
+
\tik{0.5}{%
\path (0,0) coordinate (origin);
\path (150:0.5cm) coordinate (t1);
\path (30:0.5cm) coordinate (t2);
\path (-90: 0.5cm) coordinate (t3);
\path (150:1cm) coordinate (e1);
\path (30:1cm) coordinate (e2);
\path (-90:1cm) coordinate (e3);
\path (0,0.2) coordinate (m);
\draw[thick] (e1) -- (m) -- (e2);
\draw[thick] (m) --++(0,-0.3);
\draw[thick] (e3)-- ++(0,0.3);
\draw[thick] (0,-0.4) circle (0.3cm);
}
&=
2
\tik{0.5}{%
\path (0,0) coordinate (origin);
\path (150:0.5cm) coordinate (t1);
\path (30:0.5cm) coordinate (t2);
\path (-90: 0.5cm) coordinate (t3);
\path (150:1cm) coordinate (e1);
\path (30:1cm) coordinate (e2);
\path (-90:1cm) coordinate (e3);
\path (0,0.2) coordinate (m);
\path (0,-0.5) coordinate (p);
\draw[thick] (e2) .. controls (t1) and (-1,0) .. (p);
\draw[cross] (e1) .. controls (t2) and (1,0) .. (p);
\draw[thick] (e3) -- (p);
}
-
\tik{0.5}{%
\path (0,0) coordinate (origin);
\path (150:0.5cm) coordinate (t1);
\path (30:0.5cm) coordinate (t2);
\path (-90: 0.5cm) coordinate (t3);
\path (150:1cm) coordinate (e1);
\path (30:1cm) coordinate (e2);
\path (-90:1cm) coordinate (e3);
\path (0,0.2) coordinate (m);
\draw[thick] (e1) -- (origin) -- (e2);
\draw[thick] (origin)-- (e3);
}
-
\tik{0.5}{%
\path (0,0) coordinate (origin);
\path (150:0.5cm) coordinate (t1);
\path (30:0.5cm) coordinate (t2);
\path (-90: 0.5cm) coordinate (t3);
\path (150:1cm) coordinate (e1);
\path (30:1cm) coordinate (e2);
\path (-90:1cm) coordinate (e3);
\path (0,0.2) coordinate (m);
\draw[thick] (e1) to [bend right=40] (e2);
\draw[thick] (e3)-- ++(0,0.3);
\draw[thick] (0,-0.4) circle (0.3cm);
}
\\[10pt]
&= -3 
\tik{0.5}{%
\path (0,0) coordinate (origin);
\path (150:0.5cm) coordinate (t1);
\path (30:0.5cm) coordinate (t2);
\path (-90: 0.5cm) coordinate (t3);
\path (150:1cm) coordinate (e1);
\path (30:1cm) coordinate (e2);
\path (-90:1cm) coordinate (e3);
\path (0,0.2) coordinate (m);
\draw[thick] (e1) -- (origin) -- (e2);
\draw[thick] (origin) -- (e3);
}
\qquad\text{because of $\gamma_1$ and Step \textbf{2},}
\end{split}
\]
and because of Step \textbf{3},  we conclude
\[
\tik{0.5}{%
\path (0,0) coordinate (origin);
\path (150:0.5cm) coordinate (t1);
\path (30:0.5cm) coordinate (t2);
\path (-90: 0.5cm) coordinate (t3);
\path (150:1cm) coordinate (e1);
\path (30:1cm) coordinate (e2);
\path (-90:1cm) coordinate (e3);
\draw[thick] (t1) -- (t2) -- (t3) -- cycle;
\draw[thick] (e1) -- (t1) (e2) -- (t2) (e3) -- (t3);
}
= 3
\tik{0.5}{%
\path (0,0) coordinate (origin);
\path (150:0.5cm) coordinate (t1);
\path (30:0.5cm) coordinate (t2);
\path (-90: 0.5cm) coordinate (t3);
\path (150:1cm) coordinate (e1);
\path (30:1cm) coordinate (e2);
\path (-90:1cm) coordinate (e3);
\path (0,0.2) coordinate (m);
\draw[thick] (e1) -- (origin) -- (e2);
\draw[thick] (origin) -- (e3);
}\ .
\]

\medskip
\item[\textbf{5.}] A new application of relation $\gamma_2$ gives
\[
\begin{split}
\tik{0.5}{%
\path (0,0) coordinate (origin);
\path (135:0.5cm) coordinate (c1);
\path (45:0.5cm) coordinate (c2);
\path (-45: 0.5cm) coordinate (c3);
\path (-135: 0.5cm) coordinate (c4);
\path (135:1cm) coordinate (e1);
\path (45:1cm) coordinate (e2);
\path (-45:1cm) coordinate (e3);
\path (-135:1cm) coordinate (e4);
\draw[thick] (c1) -- (c2) -- (c3) -- (c4) -- cycle;
\draw[thick] (c1) -- (e1) (c2) -- (e2) (c3) -- (e3) (c4) -- (e4);
}
&=-
\tik{0.5}{%
\path (0,0) coordinate (origin);
\path (135:0.5cm) coordinate (c1);
\path (45:0.5cm) coordinate (c2);
\path (-45: 0.5cm) coordinate (c3);
\path (-135: 0.5cm) coordinate (c4);
\path (135:1cm) coordinate (e1);
\path (45:1cm) coordinate (e2);
\path (-45:1cm) coordinate (e3);
\path (-135:1cm) coordinate (e4);
\path (0,0.4) coordinate (m);
\path (0,0.2) coordinate (t1);
\draw[thick] (t1) -- (c3) -- (c4) -- cycle;
\draw[thick] (e1) -- (m) -- (e2);
\draw[thick] (m) -- (t1) (e4) -- (c4) (e3) -- (c3);
}
+2
\tik{0.5}{%
\path (0,0) coordinate (origin);
\path (135:0.5cm) coordinate (c1);
\path (45:0.5cm) coordinate (c2);
\path (-45: 0.5cm) coordinate (c3);
\path (-135: 0.5cm) coordinate (c4);
\path (135:1cm) coordinate (e1);
\path (45:1cm) coordinate (e2);
\path (-45:1cm) coordinate (e3);
\path (-135:1cm) coordinate (e4);
\path (0,0.4) coordinate (m);
\path (0,0.2) coordinate (t1);
\draw[thick] (e2) .. controls (c1) and (-1,0) .. (c4);
\draw[cross] (e1) .. controls (c2) and (1,0) .. (c3);
\draw[thick] (e3) -- (c3) -- (c4) -- (e4);
}-
\tik{0.5}{%
\path (0,0) coordinate (origin);
\path (135:0.5cm) coordinate (c1);
\path (45:0.5cm) coordinate (c2);
\path (-45: 0.5cm) coordinate (c3);
\path (-135: 0.5cm) coordinate (c4);
\path (135:1cm) coordinate (e1);
\path (45:1cm) coordinate (e2);
\path (-45:1cm) coordinate (e3);
\path (-135:1cm) coordinate (e4);
\path (0,0.4) coordinate (m);
\path (0,0.2) coordinate (t1);
\path (0.3,-0.4) coordinate (t2);
\path (-0.3,-0.4) coordinate (t3);
\draw[thick] (e1) -- (-0.4,0) -- (0.4,0) -- (e2);
\draw[thick] (e4) --  (-0.4,0) (e3) -- (0.4,0);
} 
-
\tik{0.5}{%
\path (0,0) coordinate (origin);
\path (135:0.5cm) coordinate (c1);
\path (45:0.5cm) coordinate (c2);
\path (-45: 0.5cm) coordinate (c3);
\path (-135: 0.5cm) coordinate (c4);
\path (135:1cm) coordinate (e1);
\path (45:1cm) coordinate (e2);
\path (-45:1cm) coordinate (e3);
\path (-135:1cm) coordinate (e4);
\path (0,0.4) coordinate (m);
\path (0,0.2) coordinate (t1);
\path (0.3,-0.4) coordinate (t2);
\path (-0.3,-0.4) coordinate (t3);
\draw[thick] (e1) to [bend right=40] (e2);
\draw[thick] (0,-0.2) circle (0.4cm);
\draw[thick] (e4) -- (c4) (e3) -- (c3);
}
\\[10pt]
&=-3
\tik{0.5}{%
\path (0,0) coordinate (origin);
\path (135:0.5cm) coordinate (c1);
\path (45:0.5cm) coordinate (c2);
\path (-45: 0.5cm) coordinate (c3);
\path (-135: 0.5cm) coordinate (c4);
\path (135:1cm) coordinate (e1);
\path (45:1cm) coordinate (e2);
\path (-45:1cm) coordinate (e3);
\path (-135:1cm) coordinate (e4);
\path (0,0.4) coordinate (m);
\path (0,0.2) coordinate (t1);
\path (0.3,-0.4) coordinate (t2);
\path (-0.3,-0.4) coordinate (t3);
\draw[thick] (e1) -- (m) -- (e2);
\draw[thick] (e3) -- (0,-0.4) -- (e4);
\draw[thick] (m) -- (0,-0.4);
}
+2
\tik{0.5}{%
\path (0,0) coordinate (origin);
\path (135:0.5cm) coordinate (c1);
\path (45:0.5cm) coordinate (c2);
\path (-45: 0.5cm) coordinate (c3);
\path (-135: 0.5cm) coordinate (c4);
\path (135:1cm) coordinate (e1);
\path (45:1cm) coordinate (e2);
\path (-45:1cm) coordinate (e3);
\path (-135:1cm) coordinate (e4);
\path (0,0.4) coordinate (m);
\path (0,0.2) coordinate (t1);
\draw[thick] (e2) .. controls (c1) and (-1,0) .. (c4);
\draw[cross] (e1) .. controls (c2) and (1,0) .. (c3);
\draw[thick] (e3) -- (c3) -- (c4) -- (e4);
}-
\tik{0.5}{%
\path (0,0) coordinate (origin);
\path (135:0.5cm) coordinate (c1);
\path (45:0.5cm) coordinate (c2);
\path (-45: 0.5cm) coordinate (c3);
\path (-135: 0.5cm) coordinate (c4);
\path (135:1cm) coordinate (e1);
\path (45:1cm) coordinate (e2);
\path (-45:1cm) coordinate (e3);
\path (-135:1cm) coordinate (e4);
\path (0,0.4) coordinate (m);
\path (0,0.2) coordinate (t1);
\path (0.3,-0.4) coordinate (t2);
\path (-0.3,-0.4) coordinate (t3);
\draw[thick] (e1) -- (-0.4,0) -- (0.4,0) -- (e2);
\draw[thick] (e4) --  (-0.4,0) (e3) -- (0.4,0);
} 
+6
\tik{0.5}{%
\path (0,0) coordinate (origin);
\path (135:0.5cm) coordinate (c1);
\path (45:0.5cm) coordinate (c2);
\path (-45: 0.5cm) coordinate (c3);
\path (-135: 0.5cm) coordinate (c4);
\path (135:1cm) coordinate (e1);
\path (45:1cm) coordinate (e2);
\path (-45:1cm) coordinate (e3);
\path (-135:1cm) coordinate (e4);
\path (0,0.4) coordinate (m);
\path (0,0.2) coordinate (t1);
\path (0.3,-0.4) coordinate (t2);
\path (-0.3,-0.4) coordinate (t3);
\draw[thick] (e1) to [bend right=40] (e2);
\draw[thick] (e3) to [bend right=40] (e4);
}
\end{split}
\]
and
\[
\begin{split}
\tik{0.5}{%
\path (0,0) coordinate (origin);
\path (135:0.5cm) coordinate (c1);
\path (45:0.5cm) coordinate (c2);
\path (-45: 0.5cm) coordinate (c3);
\path (-135: 0.5cm) coordinate (c4);
\path (135:1cm) coordinate (e1);
\path (45:1cm) coordinate (e2);
\path (-45:1cm) coordinate (e3);
\path (-135:1cm) coordinate (e4);
\path (0,0.4) coordinate (m);
\path (0,0.2) coordinate (t1);
\draw[thick] (e2) .. controls (c1) and (-1,0) .. (c4);
\draw[cross] (e1) .. controls (c2) and (1,0) .. (c3);
\draw[thick] (e3) -- (c3) -- (c4) -- (e4);
}
&=-
\tik{0.5}{%
\path (0,0) coordinate (origin);
\path (135:0.5cm) coordinate (c1);
\path (45:0.5cm) coordinate (c2);
\path (-45: 0.5cm) coordinate (c3);
\path (-135: 0.5cm) coordinate (c4);
\path (135:1cm) coordinate (e1);
\path (45:1cm) coordinate (e2);
\path (-45:1cm) coordinate (e3);
\path (-135:1cm) coordinate (e4);
\path (0,0.4) coordinate (m);
\path (0,0.2) coordinate (t1);
\draw[thick] (e2) .. controls (-0.5,0.6) and (-1,0.3) ..  (t1);
\draw[cross] (e1) .. controls (0.5,0.6) and (1,0.3) .. (t1);
\draw[thick] (t1) -- (0,-0.2);
\draw[thick] (e3) -- (0,-0.2) -- (e4);
}
+2
\tik{0.5}{%
\path (0,0) coordinate (origin);
\path (135:0.5cm) coordinate (c1);
\path (45:0.5cm) coordinate (c2);
\path (-45: 0.5cm) coordinate (c3);
\path (-135: 0.5cm) coordinate (c4);
\path (135:1cm) coordinate (e1);
\path (45:1cm) coordinate (e2);
\path (-45:1cm) coordinate (e3);
\path (-135:1cm) coordinate (e4);
\path (0,0.4) coordinate (m);
\path (0,0.2) coordinate (t1);
\draw[thick] (e2) .. controls (-0.5,0.5) and (-0.5,0.1) .. (-0.5,0);
\draw[cross] (e1) .. controls (0.5,0.5) and (0.5,0.1) .. (0.5,0);
\draw[thick] (e4) .. controls (0.5,-0.5) and (0.5,-0.1) .. (0.5,0);
\draw[cross] (e3) .. controls (-0.5,-0.5) and (-0.5,-0.1) .. (-0.5,0);
}
-
\tik{0.5}{%
\path (0,0) coordinate (origin);
\path (135:0.5cm) coordinate (c1);
\path (45:0.5cm) coordinate (c2);
\path (-45: 0.5cm) coordinate (c3);
\path (-135: 0.5cm) coordinate (c4);
\path (135:1cm) coordinate (e1);
\path (45:1cm) coordinate (e2);
\path (-45:1cm) coordinate (e3);
\path (-135:1cm) coordinate (e4);
\path (0,0.4) coordinate (m);
\path (0,0.2) coordinate (t1);
\draw[thick] (e2) -- (e4);
\draw[cross] (e1) -- (e3);
}
-
\tik{0.5}{%
\path (0,0) coordinate (origin);
\path (135:0.5cm) coordinate (c1);
\path (45:0.5cm) coordinate (c2);
\path (-45: 0.5cm) coordinate (c3);
\path (-135: 0.5cm) coordinate (c4);
\path (135:1cm) coordinate (e1);
\path (45:1cm) coordinate (e2);
\path (-45:1cm) coordinate (e3);
\path (-135:1cm) coordinate (e4);
\path (0,0.4) coordinate (m);
\path (0,0.2) coordinate (t1);
\draw[thick] (e2) .. controls (-0.5,0.5) and (-0.5,0.1) .. (-0.5,0);
\draw[cross] (e1) .. controls (0.5,0.5) and (0.5,0.1) .. (0.5,0);
\draw[thick] (0.5,0) .. controls (0.5,-0.4) and (-0.5,-0.4) .. (-0.5,0);
\draw[thick] (e3) to [bend right=30] (e4);
}
\\[10pt]
&=
\tik{0.5}{%
\path (0,0) coordinate (origin);
\path (135:0.5cm) coordinate (c1);
\path (45:0.5cm) coordinate (c2);
\path (-45: 0.5cm) coordinate (c3);
\path (-135: 0.5cm) coordinate (c4);
\path (135:1cm) coordinate (e1);
\path (45:1cm) coordinate (e2);
\path (-45:1cm) coordinate (e3);
\path (-135:1cm) coordinate (e4);
\path (0,0.4) coordinate (m);
\path (0,0.2) coordinate (t1);
\path (0.3,-0.4) coordinate (t2);
\path (-0.3,-0.4) coordinate (t3);
\draw[thick] (e1) -- (m) -- (e2);
\draw[thick] (e3) -- (0,-0.4) -- (e4);
\draw[thick] (m) -- (0,-0.4);
}
+2
\tik{0.5}{%
\path (0,0) coordinate (origin);
\path (135:0.5cm) coordinate (c1);
\path (45:0.5cm) coordinate (c2);
\path (-45: 0.5cm) coordinate (c3);
\path (-135: 0.5cm) coordinate (c4);
\path (135:1cm) coordinate (e1);
\path (45:1cm) coordinate (e2);
\path (-45:1cm) coordinate (e3);
\path (-135:1cm) coordinate (e4);
\path (0,0.4) coordinate (m);
\path (0,0.2) coordinate (t1);
\draw[thick] (e1) to [bend left=40] (e4);
\draw[thick] (e2) to [bend right=40] (e3);
}-
\tik{0.5}{%
\path (0,0) coordinate (origin);
\path (135:0.5cm) coordinate (c1);
\path (45:0.5cm) coordinate (c2);
\path (-45: 0.5cm) coordinate (c3);
\path (-135: 0.5cm) coordinate (c4);
\path (135:1cm) coordinate (e1);
\path (45:1cm) coordinate (e2);
\path (-45:1cm) coordinate (e3);
\path (-135:1cm) coordinate (e4);
\path (0,0.4) coordinate (m);
\path (0,0.2) coordinate (t1);
\draw[thick] (e2) -- (e4);
\draw[cross] (e1) -- (e3);
}
-
\tik{0.5}{%
\path (0,0) coordinate (origin);
\path (135:0.5cm) coordinate (c1);
\path (45:0.5cm) coordinate (c2);
\path (-45: 0.5cm) coordinate (c3);
\path (-135: 0.5cm) coordinate (c4);
\path (135:1cm) coordinate (e1);
\path (45:1cm) coordinate (e2);
\path (-45:1cm) coordinate (e3);
\path (-135:1cm) coordinate (e4);
\path (0,0.4) coordinate (m);
\path (0,0.2) coordinate (t1);
\path (0.3,-0.4) coordinate (t2);
\path (-0.3,-0.4) coordinate (t3);
\draw[thick] (e1) to [bend right=40] (e2);
\draw[thick] (e3) to [bend right=40] (e4);
}\ .
\end{split}.
\]
Thus,  we have
\[
\tik{0.5}{%
\path (0,0) coordinate (origin);
\path (135:0.5cm) coordinate (c1);
\path (45:0.5cm) coordinate (c2);
\path (-45: 0.5cm) coordinate (c3);
\path (-135: 0.5cm) coordinate (c4);
\path (135:1cm) coordinate (e1);
\path (45:1cm) coordinate (e2);
\path (-45:1cm) coordinate (e3);
\path (-135:1cm) coordinate (e4);
\draw[thick] (c1) -- (c2) -- (c3) -- (c4) -- cycle;
\draw[thick] (c1) -- (e1) (c2) -- (e2) (c3) -- (e3) (c4) -- (e4);
}
=
-
\tik{0.5}{%
\path (0,0) coordinate (origin);
\path (135:0.5cm) coordinate (c1);
\path (45:0.5cm) coordinate (c2);
\path (-45: 0.5cm) coordinate (c3);
\path (-135: 0.5cm) coordinate (c4);
\path (135:1cm) coordinate (e1);
\path (45:1cm) coordinate (e2);
\path (-45:1cm) coordinate (e3);
\path (-135:1cm) coordinate (e4);
\path (0,0.4) coordinate (m);
\path (0,0.2) coordinate (t1);
\path (0.3,-0.4) coordinate (t2);
\path (-0.3,-0.4) coordinate (t3);
\draw[thick] (e1) -- (m) -- (e2);
\draw[thick] (e3) -- (0,-0.4) -- (e4);
\draw[thick] (m) -- (0,-0.4);
}
-
\tik{0.5}{%
\path (0,0) coordinate (origin);
\path (135:0.5cm) coordinate (c1);
\path (45:0.5cm) coordinate (c2);
\path (-45: 0.5cm) coordinate (c3);
\path (-135: 0.5cm) coordinate (c4);
\path (135:1cm) coordinate (e1);
\path (45:1cm) coordinate (e2);
\path (-45:1cm) coordinate (e3);
\path (-135:1cm) coordinate (e4);
\path (0,0.4) coordinate (m);
\path (0,0.2) coordinate (t1);
\path (0.3,-0.4) coordinate (t2);
\path (-0.3,-0.4) coordinate (t3);
\draw[thick] (e1) -- (-0.4,0) -- (0.4,0) -- (e2);
\draw[thick] (e4) --  (-0.4,0) (e3) -- (0.4,0);
} 
+4
\tik{0.5}{%
\path (0,0) coordinate (origin);
\path (135:0.5cm) coordinate (c1);
\path (45:0.5cm) coordinate (c2);
\path (-45: 0.5cm) coordinate (c3);
\path (-135: 0.5cm) coordinate (c4);
\path (135:1cm) coordinate (e1);
\path (45:1cm) coordinate (e2);
\path (-45:1cm) coordinate (e3);
\path (-135:1cm) coordinate (e4);
\path (0,0.4) coordinate (m);
\path (0,0.2) coordinate (t1);
\draw[thick] (e1) to [bend left=40] (e4);
\draw[thick] (e2) to [bend right=40] (e3);
}
-2
\tik{0.5}{%
\path (0,0) coordinate (origin);
\path (135:0.5cm) coordinate (c1);
\path (45:0.5cm) coordinate (c2);
\path (-45: 0.5cm) coordinate (c3);
\path (-135: 0.5cm) coordinate (c4);
\path (135:1cm) coordinate (e1);
\path (45:1cm) coordinate (e2);
\path (-45:1cm) coordinate (e3);
\path (-135:1cm) coordinate (e4);
\path (0,0.4) coordinate (m);
\path (0,0.2) coordinate (t1);
\draw[thick] (e2) -- (e4);
\draw[cross] (e1) -- (e3);
}
+4
\tik{0.5}{%
\path (0,0) coordinate (origin);
\path (135:0.5cm) coordinate (c1);
\path (45:0.5cm) coordinate (c2);
\path (-45: 0.5cm) coordinate (c3);
\path (-135: 0.5cm) coordinate (c4);
\path (135:1cm) coordinate (e1);
\path (45:1cm) coordinate (e2);
\path (-45:1cm) coordinate (e3);
\path (-135:1cm) coordinate (e4);
\path (0,0.4) coordinate (m);
\path (0,0.2) coordinate (t1);
\path (0.3,-0.4) coordinate (t2);
\path (-0.3,-0.4) coordinate (t3);
\draw[thick] (e1) to [bend right=40] (e2);
\draw[thick] (e3) to [bend right=40] (e4);
}\ , 
\]
and applying relation $\gamma_2$ again gives
\[
\tik{0.5}{%
\path (0,0) coordinate (origin);
\path (135:0.5cm) coordinate (c1);
\path (45:0.5cm) coordinate (c2);
\path (-45: 0.5cm) coordinate (c3);
\path (-135: 0.5cm) coordinate (c4);
\path (135:1cm) coordinate (e1);
\path (45:1cm) coordinate (e2);
\path (-45:1cm) coordinate (e3);
\path (-135:1cm) coordinate (e4);
\draw[thick] (c1) -- (c2) -- (c3) -- (c4) -- cycle;
\draw[thick] (c1) -- (e1) (c2) -- (e2) (c3) -- (e3) (c4) -- (e4);
}
=
-2\left[\ \vrule width 0pt depth 15pt 
\tik{0.5}{%
\path (0,0) coordinate (origin);
\path (135:0.5cm) coordinate (c1);
\path (45:0.5cm) coordinate (c2);
\path (-45: 0.5cm) coordinate (c3);
\path (-135: 0.5cm) coordinate (c4);
\path (135:1cm) coordinate (e1);
\path (45:1cm) coordinate (e2);
\path (-45:1cm) coordinate (e3);
\path (-135:1cm) coordinate (e4);
\path (0,0.4) coordinate (m);
\path (0,0.2) coordinate (t1);
\path (0.3,-0.4) coordinate (t2);
\path (-0.3,-0.4) coordinate (t3);
\draw[thick] (e1) -- (m) -- (e2);
\draw[thick] (e3) -- (0,-0.4) -- (e4);
\draw[thick] (m) -- (0,-0.4);
}
+
\tik{0.5}{%
\path (0,0) coordinate (origin);
\path (135:0.5cm) coordinate (c1);
\path (45:0.5cm) coordinate (c2);
\path (-45: 0.5cm) coordinate (c3);
\path (-135: 0.5cm) coordinate (c4);
\path (135:1cm) coordinate (e1);
\path (45:1cm) coordinate (e2);
\path (-45:1cm) coordinate (e3);
\path (-135:1cm) coordinate (e4);
\path (0,0.4) coordinate (m);
\path (0,0.2) coordinate (t1);
\path (0.3,-0.4) coordinate (t2);
\path (-0.3,-0.4) coordinate (t3);
\draw[thick] (e1) -- (-0.4,0) -- (0.4,0) -- (e2);
\draw[thick] (e4) --  (-0.4,0) (e3) -- (0.4,0);
} 
\ \right]
+3\left[\ \vrule width 0pt depth 15pt
\tik{0.5}{%
\path (0,0) coordinate (origin);
\path (135:0.5cm) coordinate (c1);
\path (45:0.5cm) coordinate (c2);
\path (-45: 0.5cm) coordinate (c3);
\path (-135: 0.5cm) coordinate (c4);
\path (135:1cm) coordinate (e1);
\path (45:1cm) coordinate (e2);
\path (-45:1cm) coordinate (e3);
\path (-135:1cm) coordinate (e4);
\path (0,0.4) coordinate (m);
\path (0,0.2) coordinate (t1);
\draw[thick] (e1) to [bend left=40] (e4);
\draw[thick] (e2) to [bend right=40] (e3);
}
+
\tik{0.5}{%
\path (0,0) coordinate (origin);
\path (135:0.5cm) coordinate (c1);
\path (45:0.5cm) coordinate (c2);
\path (-45: 0.5cm) coordinate (c3);
\path (-135: 0.5cm) coordinate (c4);
\path (135:1cm) coordinate (e1);
\path (45:1cm) coordinate (e2);
\path (-45:1cm) coordinate (e3);
\path (-135:1cm) coordinate (e4);
\path (0,0.4) coordinate (m);
\path (0,0.2) coordinate (t1);
\path (0.3,-0.4) coordinate (t2);
\path (-0.3,-0.4) coordinate (t3);
\draw[thick] (e1) to [bend right=40] (e2);
\draw[thick] (e3) to [bend right=40] (e4);
}\ 
\right]
\]

\medskip
\item[\textbf{6.}] Finally, relation $\gamma_2$, together with the previous steps,  shows:
\[
\begin{split}
\tik{0.5}{%
\path (0,0) coordinate (origin);
\path (90:0.5cm) coordinate (c1);
\path (18:0.5cm) coordinate (c2);
\path (-54:0.5cm) coordinate (c3);
\path (-126:0.5cm) coordinate (c4);
\path (162:0.5cm) coordinate (c5);
\path (90:1cm) coordinate (e1);
\path (18:1cm) coordinate (e2);
\path (-54:1cm) coordinate (e3);
\path (-126:1cm) coordinate (e4);
\path (162:1cm) coordinate (e5);
\draw[thick] (c1) -- (c2) -- (c3) -- (c4) -- (c5) -- cycle;
\draw[thick] (c1) -- (e1) (c2) -- (e2) (c3) -- (e3) (c4) -- (e4) (c5) -- (e5);
}
&=-
\tik{0.5}{%
\path (0,0) coordinate (origin);
\path (90:0.5cm) coordinate (c1);
\path (18:0.5cm) coordinate (c2);
\path (-54:0.5cm) coordinate (c3);
\path (-126:0.5cm) coordinate (c4);
\path (162:0.5cm) coordinate (c5);
\path (90:1cm) coordinate (e1);
\path (18:1cm) coordinate (e2);
\path (-54:1cm) coordinate (e3);
\path (-126:1cm) coordinate (e4);
\path (162:1cm) coordinate (e5);
\path (126:0.6cm) coordinate (m1);
\path (126:0.2cm) coordinate (m2);
\draw[thick] (c2) -- (c3) -- (c4);
\draw[thick] (c2) -- (e2) (c3) -- (e3) (c4) -- (e4);
\draw[thick] (c2) -- (m2) -- (c4);
\draw[thick] (m2) -- (m1);
\draw[thick] (e1) -- (m1) -- (e5);
}
+2
\tik{0.5}{%
\path (0,0) coordinate (origin);
\path (90:0.5cm) coordinate (c1);
\path (18:0.5cm) coordinate (c2);
\path (-54:0.5cm) coordinate (c3);
\path (-126:0.5cm) coordinate (c4);
\path (162:0.5cm) coordinate (c5);
\path (90:1cm) coordinate (e1);
\path (18:1cm) coordinate (e2);
\path (-54:1cm) coordinate (e3);
\path (-126:1cm) coordinate (e4);
\path (162:1cm) coordinate (e5);
\draw[thick] (e5) .. controls (c1) and (c1) .. (c2);
\draw[cross] (e1) .. controls (c5) and (c5) .. (c4);
\draw[thick]  (c2) -- (c3) -- (c4);
\draw[thick] (c2) -- (e2) (c3) -- (e3) (c4) -- (e4);
}
-
\tik{0.5}{%
\path (0,0) coordinate (origin);
\path (90:0.5cm) coordinate (c1);
\path (18:0.5cm) coordinate (c2);
\path (-54:0.5cm) coordinate (c3);
\path (-126:0.5cm) coordinate (c4);
\path (162:0.5cm) coordinate (c5);
\path (90:1cm) coordinate (e1);
\path (18:1cm) coordinate (e2);
\path (-54:1cm) coordinate (e3);
\path (-126:1cm) coordinate (e4);
\path (162:1cm) coordinate (e5);
\draw[thick] (e1) .. controls (c1) and (c5) .. (e5);
\draw[thick] (c2) -- (c3) -- (c4);
\draw[thick] (c2) -- (e2) (c3) -- (e3) (c4) -- (e4) ;
\draw[thick] (c2) -- (c4);
}
-
\tik{0.5}{%
\path (0,0) coordinate (origin);
\path (90:0.5cm) coordinate (c1);
\path (18:0.5cm) coordinate (c2);
\path (-54:0.5cm) coordinate (c3);
\path (-126:0.5cm) coordinate (c4);
\path (162:0.5cm) coordinate (c5);
\path (90:1cm) coordinate (e1);
\path (18:1cm) coordinate (e2);
\path (-54:1cm) coordinate (e3);
\path (-126:1cm) coordinate (e4);
\path (162:1cm) coordinate (e5);
\draw[thick] (c2) .. controls (c1) and (c1) .. (e1);
\draw[thick] (c4) .. controls (c5) and (c5) .. (e5);
\draw[thick]  (c2) -- (c3) -- (c4) ;
\draw[thick]  (c2) -- (e2) (c3) -- (e3) (c4) -- (e4)  ;
}
\\[10pt]
&= 
2
\tik{0.5}{%
\path (0,0) coordinate (origin);
\path (90:0.5cm) coordinate (c1);
\path (18:0.5cm) coordinate (c2);
\path (-54:0.5cm) coordinate (c3);
\path (-126:0.5cm) coordinate (c4);
\path (162:0.5cm) coordinate (c5);
\path (90:1cm) coordinate (e1);
\path (18:1cm) coordinate (e2);
\path (-54:1cm) coordinate (e3);
\path (-126:1cm) coordinate (e4);
\path (162:1cm) coordinate (e5);
\draw[thick] (e5) .. controls (c5) and (c5) .. (c1);
\draw[thick] (e4) .. controls (c4) and (c4) .. (c3);
\draw[thick] (c1) -- (c2) -- (c3) ;
\draw[thick] (c1) -- (e1) (c2) -- (e2) (c3) -- (e3) ;
}
+2
\tik{0.5}{%
\path (0,0) coordinate (origin);
\path (90:0.5cm) coordinate (c1);
\path (18:0.5cm) coordinate (c2);
\path (-54:0.5cm) coordinate (c3);
\path (-126:0.5cm) coordinate (c4);
\path (162:0.5cm) coordinate (c5);
\path (90:1cm) coordinate (e1);
\path (18:1cm) coordinate (e2);
\path (-54:1cm) coordinate (e3);
\path (-126:1cm) coordinate (e4);
\path (162:1cm) coordinate (e5);
\draw[thick] (e1) .. controls (c1) and (c1) .. (c5);
\draw[thick] (e2) .. controls (c2) and (c2) .. (c3);
\draw[thick] (c5) -- (c4) -- (c3) ;
\draw[thick] (c5) -- (e5) (c4) -- (e4) (c3) -- (e3) ;
}
-
\tik{0.5}{%
\path (0,0) coordinate (origin);
\path (90:0.5cm) coordinate (c1);
\path (18:0.5cm) coordinate (c2);
\path (-54:0.5cm) coordinate (c3);
\path (-126:0.5cm) coordinate (c4);
\path (162:0.5cm) coordinate (c5);
\path (90:1cm) coordinate (e1);
\path (18:1cm) coordinate (e2);
\path (-54:1cm) coordinate (e3);
\path (-126:1cm) coordinate (e4);
\path (162:1cm) coordinate (e5);
\draw[thick] (c2) .. controls (c1) and (c1) .. (e1);
\draw[thick] (c4) .. controls (c5) and (c5) .. (e5);
\draw[thick]  (c2) -- (c3) -- (c4) ;
\draw[thick]  (c2) -- (e2) (c3) -- (e3) (c4) -- (e4) (c5) ;
}
\\
&\qquad -3\left[\ \vrule width 0pt depth 15pt
\tik{0.5}{%
\path (0,0) coordinate (origin);
\path (90:0.5cm) coordinate (c1);
\path (18:0.5cm) coordinate (c2);
\path (-54:0.5cm) coordinate (c3);
\path (-126:0.5cm) coordinate (c4);
\path (162:0.5cm) coordinate (c5);
\path (90:1cm) coordinate (e1);
\path (18:1cm) coordinate (e2);
\path (-54:1cm) coordinate (e3);
\path (-126:1cm) coordinate (e4);
\path (162:1cm) coordinate (e5);
\draw[thick] (e5) .. controls (c5) and (c5) .. (c1);
\draw[thick] (e2) .. controls (c2) and (c2) .. (c1);
\draw[thick]  (c3) -- (c4) ;
\draw[thick]  (c1) -- (e1) (c3) -- (e3) (c4) -- (e4)  ;
}
+
\tik{0.5}{%
\path (0,0) coordinate (origin);
\path (90:0.5cm) coordinate (c1);
\path (18:0.5cm) coordinate (c2);
\path (-54:0.5cm) coordinate (c3);
\path (-126:0.5cm) coordinate (c4);
\path (162:0.5cm) coordinate (c5);
\path (90:1cm) coordinate (e1);
\path (18:1cm) coordinate (e2);
\path (-54:1cm) coordinate (e3);
\path (-126:1cm) coordinate (e4);
\path (162:1cm) coordinate (e5);
\draw[thick] (e4) .. controls (c4) and (c4) .. (c5);
\draw[thick] (e1) .. controls (c1) and (c1) .. (c5);
\draw[thick]  (c2) -- (c3) ;
\draw[thick]  (c5) -- (e5) (c2) -- (e2) (c3) -- (e3)  ;
}
+
\tik{0.5}{%
\path (0,0) coordinate (origin);
\path (90:0.5cm) coordinate (c1);
\path (18:0.5cm) coordinate (c2);
\path (-54:0.5cm) coordinate (c3);
\path (-126:0.5cm) coordinate (c4);
\path (162:0.5cm) coordinate (c5);
\path (90:1cm) coordinate (e1);
\path (18:1cm) coordinate (e2);
\path (-54:1cm) coordinate (e3);
\path (-126:1cm) coordinate (e4);
\path (162:1cm) coordinate (e5);
\draw[thick] (e2) .. controls (c2) and (c2) .. (c3);
\draw[thick] (e4) .. controls (c4) and (c4) .. (c3);
\draw[thick]  (c5) -- (c1) ;
\draw[thick]  (c3) -- (e3) (c5) -- (e5) (c1) -- (e1)  ;
}
\right]
+2
\tik{0.5}{%
\path (0,0) coordinate (origin);
\path (90:0.5cm) coordinate (c1);
\path (18:0.5cm) coordinate (c2);
\path (-54:0.5cm) coordinate (c3);
\path (-126:0.5cm) coordinate (c4);
\path (162:0.5cm) coordinate (c5);
\path (90:1cm) coordinate (e1);
\path (18:1cm) coordinate (e2);
\path (-54:1cm) coordinate (e3);
\path (-126:1cm) coordinate (e4);
\path (162:1cm) coordinate (e5);
\draw[thick] (e5) .. controls (c1) and (c1) .. (c2);
\draw[cross] (e1) .. controls (c5) and (c5) .. (c4);
\draw[thick]  (c2) -- (c3) -- (c4);
\draw[thick] (c2) -- (e2) (c3) -- (e3) (c4) -- (e4);
}
\end{split}
\]
and
\[
\begin{split}
\tik{0.5}{%
\path (0,0) coordinate (origin);
\path (90:0.5cm) coordinate (c1);
\path (18:0.5cm) coordinate (c2);
\path (-54:0.5cm) coordinate (c3);
\path (-126:0.5cm) coordinate (c4);
\path (162:0.5cm) coordinate (c5);
\path (90:1cm) coordinate (e1);
\path (18:1cm) coordinate (e2);
\path (-54:1cm) coordinate (e3);
\path (-126:1cm) coordinate (e4);
\path (162:1cm) coordinate (e5);
\draw[thick] (e5) .. controls (c1) and (c1) .. (c2);
\draw[cross] (e1) .. controls (c5) and (c5) .. (c4);
\draw[thick]  (c2) -- (c3) -- (c4);
\draw[thick] (c2) -- (e2) (c3) -- (e3) (c4) -- (e4);
}
&=-
\tik{0.5}{%
\path (0,0) coordinate (origin);
\path (90:0.5cm) coordinate (c1);
\path (18:0.5cm) coordinate (c2);
\path (-54:0.5cm) coordinate (c3);
\path (-126:0.5cm) coordinate (c4);
\path (162:0.5cm) coordinate (c5);
\path (90:1cm) coordinate (e1);
\path (18:1cm) coordinate (e2);
\path (-54:1cm) coordinate (e3);
\path (-126:1cm) coordinate (e4);
\path (162:1cm) coordinate (e5);
\path (126:0.6cm) coordinate (m1);
\path (126:0.2cm) coordinate (m2);
\path (-18:0.6cm) coordinate (n1);
\draw[thick] (e5) .. controls (c1) and (c1) .. (origin);
\draw[cross] (e1) .. controls (c5) and (c5) .. (c4);
\draw[thick]  (e4) -- (c4) -- (origin) -- (n1) -- (e2);
\draw[thick] (e3) -- (n1);
}
+2
\tik{0.5}{%
\path (0,0) coordinate (origin);
\path (90:0.5cm) coordinate (c1);
\path (18:0.5cm) coordinate (c2);
\path (-54:0.5cm) coordinate (c3);
\path (-126:0.5cm) coordinate (c4);
\path (162:0.5cm) coordinate (c5);
\path (90:1cm) coordinate (e1);
\path (18:1cm) coordinate (e2);
\path (-54:1cm) coordinate (e3);
\path (-126:1cm) coordinate (e4);
\path (162:1cm) coordinate (e5);
\path (126:0.6cm) coordinate (m1);
\path (126:0.2cm) coordinate (m2);
\path (-18:0.6cm) coordinate (n1);
\draw[thick] (e1) .. controls (c5) and (c5) .. (c4);
\draw[thick] (e2) .. controls (c3) and (c3) .. (c4);
\draw[cross] (e5) to [bend left=60] (e3);
\draw[thick] (e4) -- (c4);
}
-
\tik{0.5}{%
\path (0,0) coordinate (origin);
\path (90:0.5cm) coordinate (c1);
\path (18:0.5cm) coordinate (c2);
\path (-54:0.5cm) coordinate (c3);
\path (-126:0.5cm) coordinate (c4);
\path (162:0.5cm) coordinate (c5);
\path (90:1cm) coordinate (e1);
\path (18:1cm) coordinate (e2);
\path (-54:1cm) coordinate (e3);
\path (-126:1cm) coordinate (e4);
\path (162:1cm) coordinate (e5);
\path (126:0.6cm) coordinate (m1);
\path (126:0.2cm) coordinate (m2);
\path (-18:0.6cm) coordinate (n1);
\draw[thick] (e1) .. controls (c5) and (c5) .. (c4);
\draw[cross] (e5) .. controls (c1) and (c3) .. (c4);
\draw[thick] (e2) to [bend right=60] (e3);
\draw[thick] (e4) -- (c4);
}
-
\tik{0.5}{%
\path (0,0) coordinate (origin);
\path (90:0.5cm) coordinate (c1);
\path (18:0.5cm) coordinate (c2);
\path (-54:0.5cm) coordinate (c3);
\path (-126:0.5cm) coordinate (c4);
\path (162:0.5cm) coordinate (c5);
\path (90:1cm) coordinate (e1);
\path (18:1cm) coordinate (e2);
\path (-54:1cm) coordinate (e3);
\path (-126:1cm) coordinate (e4);
\path (162:1cm) coordinate (e5);
\path (126:0.6cm) coordinate (m1);
\path (126:0.2cm) coordinate (m2);
\path (-90:0.6cm) coordinate (n1);
\draw[thick] (e1) -- (n1);
\draw[cross] (e5) to [bend right=30] (e2);
\draw[thick] (e4) -- (n1) -- (e3);
}
\\[10pt]
&=-
\tik{0.5}{%
\path (0,0) coordinate (origin);
\path (90:0.5cm) coordinate (c1);
\path (18:0.5cm) coordinate (c2);
\path (-54:0.5cm) coordinate (c3);
\path (-126:0.5cm) coordinate (c4);
\path (162:0.5cm) coordinate (c5);
\path (90:1cm) coordinate (e1);
\path (18:1cm) coordinate (e2);
\path (-54:1cm) coordinate (e3);
\path (-126:1cm) coordinate (e4);
\path (162:1cm) coordinate (e5);
\path (126:0.6cm) coordinate (m1);
\path (126:0.2cm) coordinate (m2);
\path (-18:0.6cm) coordinate (n1);
\draw[thick] (e5) .. controls (c1) and (c1) .. (origin);
\draw[cross] (e1) .. controls (c5) and (c5) .. (c4);
\draw[thick]  (e4) -- (c4) -- (origin) -- (n1) -- (e2);
\draw[thick] (e3) -- (n1);
}
+2
\tik{0.5}{%
\path (0,0) coordinate (origin);
\path (90:0.5cm) coordinate (c1);
\path (18:0.5cm) coordinate (c2);
\path (-54:0.5cm) coordinate (c3);
\path (-126:0.5cm) coordinate (c4);
\path (162:0.5cm) coordinate (c5);
\path (90:1cm) coordinate (e1);
\path (18:1cm) coordinate (e2);
\path (-54:1cm) coordinate (e3);
\path (-126:1cm) coordinate (e4);
\path (162:1cm) coordinate (e5);
\path (126:0.6cm) coordinate (m1);
\path (126:0.2cm) coordinate (m2);
\path (54:0.6cm) coordinate (n1);
\draw[thick] (e4) -- (n1);
\draw[cross] (e3) to [bend right=30] (e5);
\draw[thick] (e2) -- (n1) -- (e1);
}
+
\tik{0.5}{%
\path (0,0) coordinate (origin);
\path (90:0.5cm) coordinate (c1);
\path (18:0.5cm) coordinate (c2);
\path (-54:0.5cm) coordinate (c3);
\path (-126:0.5cm) coordinate (c4);
\path (162:0.5cm) coordinate (c5);
\path (90:1cm) coordinate (e1);
\path (18:1cm) coordinate (e2);
\path (-54:1cm) coordinate (e3);
\path (-126:1cm) coordinate (e4);
\path (162:1cm) coordinate (e5);
\draw[thick] (e4) .. controls (c4) and (c4) .. (c5);
\draw[thick] (e1) .. controls (c1) and (c1) .. (c5);
\draw[thick]  (c2) -- (c3) ;
\draw[thick]  (c5) -- (e5) (c2) -- (e2) (c3) -- (e3)  ;
}
-
\tik{0.5}{%
\path (0,0) coordinate (origin);
\path (90:0.5cm) coordinate (c1);
\path (18:0.5cm) coordinate (c2);
\path (-54:0.5cm) coordinate (c3);
\path (-126:0.5cm) coordinate (c4);
\path (162:0.5cm) coordinate (c5);
\path (90:1cm) coordinate (e1);
\path (18:1cm) coordinate (e2);
\path (-54:1cm) coordinate (e3);
\path (-126:1cm) coordinate (e4);
\path (162:1cm) coordinate (e5);
\path (126:0.6cm) coordinate (m1);
\path (126:0.2cm) coordinate (m2);
\path (-90:0.6cm) coordinate (n1);
\draw[thick] (e1) -- (n1);
\draw[cross] (e5) to [bend right=30] (e2);
\draw[thick] (e4) -- (n1) -- (e3);
}\ ,
\\[15pt]
\tik{0.5}{%
\path (0,0) coordinate (origin);
\path (90:0.5cm) coordinate (c1);
\path (18:0.5cm) coordinate (c2);
\path (-54:0.5cm) coordinate (c3);
\path (-126:0.5cm) coordinate (c4);
\path (162:0.5cm) coordinate (c5);
\path (90:1cm) coordinate (e1);
\path (18:1cm) coordinate (e2);
\path (-54:1cm) coordinate (e3);
\path (-126:1cm) coordinate (e4);
\path (162:1cm) coordinate (e5);
\path (126:0.6cm) coordinate (m1);
\path (126:0.2cm) coordinate (m2);
\path (-18:0.6cm) coordinate (n1);
\draw[thick] (e5) .. controls (c1) and (c1) .. (origin);
\draw[cross] (e1) .. controls (c5) and (c5) .. (c4);
\draw[thick]  (e4) -- (c4) -- (origin) -- (n1) -- (e2);
\draw[thick] (e3) -- (n1);
}
&= -
\tik{0.5}{%
\path (0,0) coordinate (origin);
\path (90:0.5cm) coordinate (c1);
\path (18:0.5cm) coordinate (c2);
\path (-54:0.5cm) coordinate (c3);
\path (-126:0.5cm) coordinate (c4);
\path (162:0.5cm) coordinate (c5);
\path (90:1cm) coordinate (e1);
\path (18:1cm) coordinate (e2);
\path (-54:1cm) coordinate (e3);
\path (-126:1cm) coordinate (e4);
\path (162:1cm) coordinate (e5);
\path (162:0.4cm) coordinate (m1);
\path (126:0.2cm) coordinate (m2);
\path (-162:0.4cm) coordinate (n1);
\draw[thick] (e1) .. controls (m1) and (e5) .. (n1);
\draw[cross] (e5) .. controls (c1) and (origin) .. (n1);
\draw[thick] (e2) .. controls (c2) and (c2) .. (c3);
\draw[thick]  (e3) -- (c3) -- (c4) -- (e4);
\draw[thick] (c4) -- (n1);
}
+2
\tik{0.5}{%
\path (0,0) coordinate (origin);
\path (90:0.5cm) coordinate (c1);
\path (18:0.5cm) coordinate (c2);
\path (-54:0.5cm) coordinate (c3);
\path (-126:0.5cm) coordinate (c4);
\path (162:0.5cm) coordinate (c5);
\path (90:1cm) coordinate (e1);
\path (18:1cm) coordinate (e2);
\path (-54:1cm) coordinate (e3);
\path (-126:1cm) coordinate (e4);
\path (162:1cm) coordinate (e5);
\path (-162:0.9cm) coordinate (m1);
\path (126:0.2cm) coordinate (m2);
\path (-18:0.6cm) coordinate (n1);
\draw[thick] (e1) .. controls (m1) and (m1) .. (n1);
\draw[cross] (e5) .. controls (origin) and (origin) .. (e4);
\draw[thick] (e2) -- (n1) -- (e3);
}
-
\tik{0.5}{%
\path (0,0) coordinate (origin);
\path (90:0.5cm) coordinate (c1);
\path (18:0.5cm) coordinate (c2);
\path (-54:0.5cm) coordinate (c3);
\path (-126:0.5cm) coordinate (c4);
\path (162:0.5cm) coordinate (c5);
\path (90:1cm) coordinate (e1);
\path (18:1cm) coordinate (e2);
\path (-54:1cm) coordinate (e3);
\path (-126:1cm) coordinate (e4);
\path (162:1cm) coordinate (e5);
\path (126:0.6cm) coordinate (m1);
\path (126:0.2cm) coordinate (m2);
\path (-18:0.6cm) coordinate (n1);
\draw[thick] (e5) -- (n1);
\draw[cross] (e4) to [bend right=30] (e1);
\draw[thick] (e3) -- (n1) -- (e2);
}
-
\tik{0.5}{%
\path (0,0) coordinate (origin);
\path (90:0.5cm) coordinate (c1);
\path (18:0.5cm) coordinate (c2);
\path (-54:0.5cm) coordinate (c3);
\path (-126:0.5cm) coordinate (c4);
\path (162:0.5cm) coordinate (c5);
\path (90:1cm) coordinate (e1);
\path (18:1cm) coordinate (e2);
\path (-54:1cm) coordinate (e3);
\path (-126:1cm) coordinate (e4);
\path (162:1cm) coordinate (e5);
\path (126:0.6cm) coordinate (m1);
\path (126:0.2cm) coordinate (m2);
\path (-90:0.6cm) coordinate (n1);
\draw[thick] (e1) .. controls (-0.5,0) and (-0.2,-0.2) .. (origin);
\draw[cross] (e5) .. controls (0,0.5) and (0.2,0.2) .. (origin);
\draw[thick] (e4) .. controls (c4) and (c4) .. (c3);
\draw[thick] (e2) .. controls (c2) and (c2) .. (c3);
\draw[thick] (e3) -- (c3);
}
\\[10pt]
&=
\tik{0.5}{%
\path (0,0) coordinate (origin);
\path (90:0.5cm) coordinate (c1);
\path (18:0.5cm) coordinate (c2);
\path (-54:0.5cm) coordinate (c3);
\path (-126:0.5cm) coordinate (c4);
\path (162:0.5cm) coordinate (c5);
\path (90:1cm) coordinate (e1);
\path (18:1cm) coordinate (e2);
\path (-54:1cm) coordinate (e3);
\path (-126:1cm) coordinate (e4);
\path (162:1cm) coordinate (e5);
\draw[thick] (e1) .. controls (c1) and (c1) .. (c5);
\draw[thick] (e2) .. controls (c2) and (c2) .. (c3);
\draw[thick] (c5) -- (c4) -- (c3) ;
\draw[thick] (c5) -- (e5) (c4) -- (e4) (c3) -- (e3) ;
}
+2
\tik{0.5}{%
\path (0,0) coordinate (origin);
\path (90:0.5cm) coordinate (c1);
\path (18:0.5cm) coordinate (c2);
\path (-54:0.5cm) coordinate (c3);
\path (-126:0.5cm) coordinate (c4);
\path (162:0.5cm) coordinate (c5);
\path (90:1cm) coordinate (e1);
\path (18:1cm) coordinate (e2);
\path (-54:1cm) coordinate (e3);
\path (-126:1cm) coordinate (e4);
\path (162:1cm) coordinate (e5);
\draw[thick] (e1) .. controls (c1) and (c1) .. (c2);
\draw[thick] (e3) .. controls (c3) and (c3) .. (c2);
\draw[thick]  (c4) -- (c5) ;
\draw[thick]  (c2) -- (e2) (c4) -- (e4) (c5) -- (e5)  ;
}
-
\tik{0.5}{%
\path (0,0) coordinate (origin);
\path (90:0.5cm) coordinate (c1);
\path (18:0.5cm) coordinate (c2);
\path (-54:0.5cm) coordinate (c3);
\path (-126:0.5cm) coordinate (c4);
\path (162:0.5cm) coordinate (c5);
\path (90:1cm) coordinate (e1);
\path (18:1cm) coordinate (e2);
\path (-54:1cm) coordinate (e3);
\path (-126:1cm) coordinate (e4);
\path (162:1cm) coordinate (e5);
\path (126:0.6cm) coordinate (m1);
\path (126:0.2cm) coordinate (m2);
\path (-18:0.6cm) coordinate (n1);
\draw[thick] (e5) -- (n1);
\draw[cross] (e4) to [bend right=30] (e1);
\draw[thick] (e3) -- (n1) -- (e2);
}
-
\tik{0.5}{%
\path (0,0) coordinate (origin);
\path (90:0.5cm) coordinate (c1);
\path (18:0.5cm) coordinate (c2);
\path (-54:0.5cm) coordinate (c3);
\path (-126:0.5cm) coordinate (c4);
\path (162:0.5cm) coordinate (c5);
\path (90:1cm) coordinate (e1);
\path (18:1cm) coordinate (e2);
\path (-54:1cm) coordinate (e3);
\path (-126:1cm) coordinate (e4);
\path (162:1cm) coordinate (e5);
\draw[thick] (e2) .. controls (c2) and (c2) .. (c3);
\draw[thick] (e4) .. controls (c4) and (c4) .. (c3);
\draw[thick]  (c5) -- (c1) ;
\draw[thick]  (c3) -- (e3) (c5) -- (e5) (c1) -- (e1)  ;
}
\end{split}.
\]
Hence,
\[
\begin{split}
\tik{0.5}{%
\path (0,0) coordinate (origin);
\path (90:0.5cm) coordinate (c1);
\path (18:0.5cm) coordinate (c2);
\path (-54:0.5cm) coordinate (c3);
\path (-126:0.5cm) coordinate (c4);
\path (162:0.5cm) coordinate (c5);
\path (90:1cm) coordinate (e1);
\path (18:1cm) coordinate (e2);
\path (-54:1cm) coordinate (e3);
\path (-126:1cm) coordinate (e4);
\path (162:1cm) coordinate (e5);
\draw[thick] (e5) .. controls (c1) and (c1) .. (c2);
\draw[cross] (e1) .. controls (c5) and (c5) .. (c4);
\draw[thick]  (c2) -- (c3) -- (c4);
\draw[thick] (c2) -- (e2) (c3) -- (e3) (c4) -- (e4);
}
&=
\tik{0.5}{%
\path (0,0) coordinate (origin);
\path (90:0.5cm) coordinate (c1);
\path (18:0.5cm) coordinate (c2);
\path (-54:0.5cm) coordinate (c3);
\path (-126:0.5cm) coordinate (c4);
\path (162:0.5cm) coordinate (c5);
\path (90:1cm) coordinate (e1);
\path (18:1cm) coordinate (e2);
\path (-54:1cm) coordinate (e3);
\path (-126:1cm) coordinate (e4);
\path (162:1cm) coordinate (e5);
\draw[thick] (e1) .. controls (c1) and (c1) .. (c5);
\draw[thick] (e2) .. controls (c2) and (c2) .. (c3);
\draw[thick] (c5) -- (c4) -- (c3) ;
\draw[thick] (c5) -- (e5) (c4) -- (e4) (c3) -- (e3) ;
}
+
\tik{0.5}{%
\path (0,0) coordinate (origin);
\path (90:0.5cm) coordinate (c1);
\path (18:0.5cm) coordinate (c2);
\path (-54:0.5cm) coordinate (c3);
\path (-126:0.5cm) coordinate (c4);
\path (162:0.5cm) coordinate (c5);
\path (90:1cm) coordinate (e1);
\path (18:1cm) coordinate (e2);
\path (-54:1cm) coordinate (e3);
\path (-126:1cm) coordinate (e4);
\path (162:1cm) coordinate (e5);
\draw[thick] (e4) .. controls (c4) and (c4) .. (c5);
\draw[thick] (e1) .. controls (c1) and (c1) .. (c5);
\draw[thick]  (c2) -- (c3) ;
\draw[thick]  (c5) -- (e5) (c2) -- (e2) (c3) -- (e3)  ;
}
-2
\tik{0.5}{%
\path (0,0) coordinate (origin);
\path (90:0.5cm) coordinate (c1);
\path (18:0.5cm) coordinate (c2);
\path (-54:0.5cm) coordinate (c3);
\path (-126:0.5cm) coordinate (c4);
\path (162:0.5cm) coordinate (c5);
\path (90:1cm) coordinate (e1);
\path (18:1cm) coordinate (e2);
\path (-54:1cm) coordinate (e3);
\path (-126:1cm) coordinate (e4);
\path (162:1cm) coordinate (e5);
\draw[thick] (e1) .. controls (c1) and (c1) .. (c2);
\draw[thick] (e3) .. controls (c3) and (c3) .. (c2);
\draw[thick]  (c4) -- (c5) ;
\draw[thick]  (c2) -- (e2) (c4) -- (e4) (c5) -- (e5)  ;
}
+
\tik{0.5}{%
\path (0,0) coordinate (origin);
\path (90:0.5cm) coordinate (c1);
\path (18:0.5cm) coordinate (c2);
\path (-54:0.5cm) coordinate (c3);
\path (-126:0.5cm) coordinate (c4);
\path (162:0.5cm) coordinate (c5);
\path (90:1cm) coordinate (e1);
\path (18:1cm) coordinate (e2);
\path (-54:1cm) coordinate (e3);
\path (-126:1cm) coordinate (e4);
\path (162:1cm) coordinate (e5);
\draw[thick] (e2) .. controls (c2) and (c2) .. (c3);
\draw[thick] (e4) .. controls (c4) and (c4) .. (c3);
\draw[thick]  (c5) -- (c1) ;
\draw[thick]  (c3) -- (e3) (c5) -- (e5) (c1) -- (e1)  ;
}
\\
&\hspace*{1in}
+2
\tik{0.5}{%
\path (0,0) coordinate (origin);
\path (90:0.5cm) coordinate (c1);
\path (18:0.5cm) coordinate (c2);
\path (-54:0.5cm) coordinate (c3);
\path (-126:0.5cm) coordinate (c4);
\path (162:0.5cm) coordinate (c5);
\path (90:1cm) coordinate (e1);
\path (18:1cm) coordinate (e2);
\path (-54:1cm) coordinate (e3);
\path (-126:1cm) coordinate (e4);
\path (162:1cm) coordinate (e5);
\path (126:0.6cm) coordinate (m1);
\path (126:0.2cm) coordinate (m2);
\path (56:0.6cm) coordinate (n1);
\draw[thick] (e4) -- (n1);
\draw[cross] (e3) to [bend right=30] (e5);
\draw[thick] (e2) -- (n1) -- (e1);
}
-
\tik{0.5}{%
\path (0,0) coordinate (origin);
\path (90:0.5cm) coordinate (c1);
\path (18:0.5cm) coordinate (c2);
\path (-54:0.5cm) coordinate (c3);
\path (-126:0.5cm) coordinate (c4);
\path (162:0.5cm) coordinate (c5);
\path (90:1cm) coordinate (e1);
\path (18:1cm) coordinate (e2);
\path (-54:1cm) coordinate (e3);
\path (-126:1cm) coordinate (e4);
\path (162:1cm) coordinate (e5);
\path (126:0.6cm) coordinate (m1);
\path (126:0.2cm) coordinate (m2);
\path (-90:0.6cm) coordinate (n1);
\draw[thick] (e1) -- (n1);
\draw[cross] (e5) to [bend right=30] (e2);
\draw[thick] (e4) -- (n1) -- (e3);
}
+
\tik{0.5}{%
\path (0,0) coordinate (origin);
\path (90:0.5cm) coordinate (c1);
\path (18:0.5cm) coordinate (c2);
\path (-54:0.5cm) coordinate (c3);
\path (-126:0.5cm) coordinate (c4);
\path (162:0.5cm) coordinate (c5);
\path (90:1cm) coordinate (e1);
\path (18:1cm) coordinate (e2);
\path (-54:1cm) coordinate (e3);
\path (-126:1cm) coordinate (e4);
\path (162:1cm) coordinate (e5);
\path (126:0.6cm) coordinate (m1);
\path (126:0.2cm) coordinate (m2);
\path (-18:0.6cm) coordinate (n1);
\draw[thick] (e5) -- (n1);
\draw[cross] (e4) to [bend right=30] (e1);
\draw[thick] (e3) -- (n1) -- (e2);
}
\end{split}
\]
and consequently,
\[
\begin{split}
\tik{0.5}{%
\path (0,0) coordinate (origin);
\path (90:0.5cm) coordinate (c1);
\path (18:0.5cm) coordinate (c2);
\path (-54:0.5cm) coordinate (c3);
\path (-126:0.5cm) coordinate (c4);
\path (162:0.5cm) coordinate (c5);
\path (90:1cm) coordinate (e1);
\path (18:1cm) coordinate (e2);
\path (-54:1cm) coordinate (e3);
\path (-126:1cm) coordinate (e4);
\path (162:1cm) coordinate (e5);
\draw[thick] (c1) -- (c2) -- (c3) -- (c4) -- (c5) -- cycle;
\draw[thick] (c1) -- (e1) (c2) -- (e2) (c3) -- (e3) (c4) -- (e4) (c5) -- (e5);
\node at (152:1cm) {\tiny $1$};
\node at (80:1cm) {\tiny $2$};
\node at (28:1cm) {\tiny $3$};
\node at (-42:1cm) {\tiny $4$};
\node at (-136:1cm) {\tiny $5$};
}
&=
2
\tik{0.5}{%
\path (0,0) coordinate (origin);
\path (90:0.5cm) coordinate (c1);
\path (18:0.5cm) coordinate (c2);
\path (-54:0.5cm) coordinate (c3);
\path (-126:0.5cm) coordinate (c4);
\path (162:0.5cm) coordinate (c5);
\path (90:1cm) coordinate (e1);
\path (18:1cm) coordinate (e2);
\path (-54:1cm) coordinate (e3);
\path (-126:1cm) coordinate (e4);
\path (162:1cm) coordinate (e5);
\draw[thick] (e4) .. controls (c4) and (c4) .. (c3);
\draw[thick] (e5) .. controls (c5) and (c5) .. (c1);
\draw[thick] (c3) -- (c2) -- (c1) ;
\draw[thick] (c3) -- (e3) (c2) -- (e2) (c1) -- (e1) ;
}
-
\tik{0.5}{%
\path (0,0) coordinate (origin);
\path (90:0.5cm) coordinate (c1);
\path (18:0.5cm) coordinate (c2);
\path (-54:0.5cm) coordinate (c3);
\path (-126:0.5cm) coordinate (c4);
\path (162:0.5cm) coordinate (c5);
\path (90:1cm) coordinate (e1);
\path (18:1cm) coordinate (e2);
\path (-54:1cm) coordinate (e3);
\path (-126:1cm) coordinate (e4);
\path (162:1cm) coordinate (e5);
\draw[thick] (e5) .. controls (c5) and (c5) .. (c4);
\draw[thick] (e1) .. controls (c1) and (c1) .. (c2);
\draw[thick] (c4) -- (c3) -- (c2) ;
\draw[thick] (c4) -- (e4) (c3) -- (e3) (c2) -- (e2) ;
}
-3
\tik{0.5}{%
\path (0,0) coordinate (origin);
\path (90:0.5cm) coordinate (c1);
\path (18:0.5cm) coordinate (c2);
\path (-54:0.5cm) coordinate (c3);
\path (-126:0.5cm) coordinate (c4);
\path (162:0.5cm) coordinate (c5);
\path (90:1cm) coordinate (e1);
\path (18:1cm) coordinate (e2);
\path (-54:1cm) coordinate (e3);
\path (-126:1cm) coordinate (e4);
\path (162:1cm) coordinate (e5);
\draw[thick] (e5) .. controls (c5) and (c5) .. (c1);
\draw[thick] (e2) .. controls (c2) and (c2) .. (c1);
\draw[thick]  (c3) -- (c4) ;
\draw[thick]  (c1) -- (e1) (c3) -- (e3) (c4) -- (e4)  ;
}
-4
\tik{0.5}{%
\path (0,0) coordinate (origin);
\path (90:0.5cm) coordinate (c1);
\path (18:0.5cm) coordinate (c2);
\path (-54:0.5cm) coordinate (c3);
\path (-126:0.5cm) coordinate (c4);
\path (162:0.5cm) coordinate (c5);
\path (90:1cm) coordinate (e1);
\path (18:1cm) coordinate (e2);
\path (-54:1cm) coordinate (e3);
\path (-126:1cm) coordinate (e4);
\path (162:1cm) coordinate (e5);
\draw[thick] (e1) .. controls (c1) and (c1) .. (c2);
\draw[thick] (e3) .. controls (c3) and (c3) .. (c2);
\draw[thick]  (c4) -- (c5) ;
\draw[thick]  (c2) -- (e2) (c4) -- (e4) (c5) -- (e5)  ;
}
-
\tik{0.5}{%
\path (0,0) coordinate (origin);
\path (90:0.5cm) coordinate (c1);
\path (18:0.5cm) coordinate (c2);
\path (-54:0.5cm) coordinate (c3);
\path (-126:0.5cm) coordinate (c4);
\path (162:0.5cm) coordinate (c5);
\path (90:1cm) coordinate (e1);
\path (18:1cm) coordinate (e2);
\path (-54:1cm) coordinate (e3);
\path (-126:1cm) coordinate (e4);
\path (162:1cm) coordinate (e5);
\draw[thick] (e4) .. controls (c4) and (c4) .. (c5);
\draw[thick] (e1) .. controls (c1) and (c1) .. (c5);
\draw[thick]  (c2) -- (c3) ;
\draw[thick]  (c5) -- (e5) (c2) -- (e2) (c3) -- (e3)  ;
}
-
\tik{0.5}{%
\path (0,0) coordinate (origin);
\path (90:0.5cm) coordinate (c1);
\path (18:0.5cm) coordinate (c2);
\path (-54:0.5cm) coordinate (c3);
\path (-126:0.5cm) coordinate (c4);
\path (162:0.5cm) coordinate (c5);
\path (90:1cm) coordinate (e1);
\path (18:1cm) coordinate (e2);
\path (-54:1cm) coordinate (e3);
\path (-126:1cm) coordinate (e4);
\path (162:1cm) coordinate (e5);
\draw[thick] (e2) .. controls (c2) and (c2) .. (c3);
\draw[thick] (e4) .. controls (c4) and (c4) .. (c3);
\draw[thick]  (c5) -- (c1) ;
\draw[thick]  (c3) -- (e3) (c5) -- (e5) (c1) -- (e1)  ;
}
\\
&\hspace*{1in}
+2
\tik{0.5}{%
\path (0,0) coordinate (origin);
\path (90:0.5cm) coordinate (c1);
\path (18:0.5cm) coordinate (c2);
\path (-54:0.5cm) coordinate (c3);
\path (-126:0.5cm) coordinate (c4);
\path (162:0.5cm) coordinate (c5);
\path (90:1cm) coordinate (e1);
\path (18:1cm) coordinate (e2);
\path (-54:1cm) coordinate (e3);
\path (-126:1cm) coordinate (e4);
\path (162:1cm) coordinate (e5);
\path (126:0.6cm) coordinate (m1);
\path (126:0.2cm) coordinate (m2);
\path (-18:0.6cm) coordinate (n1);
\draw[thick] (e5) -- (n1);
\draw[cross] (e4) to [bend right=30] (e1);
\draw[thick] (e3) -- (n1) -- (e2);
}
-2
\tik{0.5}{%
\path (0,0) coordinate (origin);
\path (90:0.5cm) coordinate (c1);
\path (18:0.5cm) coordinate (c2);
\path (-54:0.5cm) coordinate (c3);
\path (-126:0.5cm) coordinate (c4);
\path (162:0.5cm) coordinate (c5);
\path (90:1cm) coordinate (e1);
\path (18:1cm) coordinate (e2);
\path (-54:1cm) coordinate (e3);
\path (-126:1cm) coordinate (e4);
\path (162:1cm) coordinate (e5);
\path (126:0.6cm) coordinate (m1);
\path (126:0.2cm) coordinate (m2);
\path (-90:0.6cm) coordinate (n1);
\draw[thick] (e1) -- (n1);
\draw[cross] (e5) to [bend right=30] (e2);
\draw[thick] (e4) -- (n1) -- (e3);
}
+4
\tik{0.5}{%
\path (0,0) coordinate (origin);
\path (90:0.5cm) coordinate (c1);
\path (18:0.5cm) coordinate (c2);
\path (-54:0.5cm) coordinate (c3);
\path (-126:0.5cm) coordinate (c4);
\path (162:0.5cm) coordinate (c5);
\path (90:1cm) coordinate (e1);
\path (18:1cm) coordinate (e2);
\path (-54:1cm) coordinate (e3);
\path (-126:1cm) coordinate (e4);
\path (162:1cm) coordinate (e5);
\path (126:0.6cm) coordinate (m1);
\path (126:0.2cm) coordinate (m2);
\path (56:0.6cm) coordinate (n1);
\draw[thick] (e4) -- (n1);
\draw[cross] (e3) to [bend right=30] (e5);
\draw[thick] (e2) -- (n1) -- (e1);
}
\\
&= 2a_3-a_4-3b_2-4b_3-b_1-b_4+2c_1-2c_2+4c_5,
\end{split}
\]
where
\[
a_1=
\tik{0.5}{%
\path (0,0) coordinate (origin);
\path (90:0.5cm) coordinate (c1);
\path (18:0.5cm) coordinate (c2);
\path (-54:0.5cm) coordinate (c3);
\path (-126:0.5cm) coordinate (c4);
\path (162:0.5cm) coordinate (c5);
\path (90:1cm) coordinate (e1);
\path (18:1cm) coordinate (e2);
\path (-54:1cm) coordinate (e3);
\path (-126:1cm) coordinate (e4);
\path (162:1cm) coordinate (e5);
\draw[thick] (e2) .. controls (c2) and (c2) .. (c1);
\draw[thick] (e3) .. controls (c3) and (c3) .. (c4);
\draw[thick] (c1) -- (c5) -- (c4) ;
\draw[thick] (c1) -- (e1) (c5) -- (e5) (c4) -- (e4) ;
\node at (150:1cm) {\tiny $1$};
}
,
\qquad
b_1=
\tik{0.5}{%
\path (0,0) coordinate (origin);
\path (90:0.5cm) coordinate (c1);
\path (18:0.5cm) coordinate (c2);
\path (-54:0.5cm) coordinate (c3);
\path (-126:0.5cm) coordinate (c4);
\path (162:0.5cm) coordinate (c5);
\path (90:1cm) coordinate (e1);
\path (18:1cm) coordinate (e2);
\path (-54:1cm) coordinate (e3);
\path (-126:1cm) coordinate (e4);
\path (162:1cm) coordinate (e5);
\draw[thick] (e4) .. controls (c4) and (c4) .. (c5);
\draw[thick] (e1) .. controls (c1) and (c1) .. (c5);
\draw[thick]  (c2) -- (c3) ;
\draw[thick]  (c5) -- (e5) (c2) -- (e2) (c3) -- (e3)  ;
\node at (150:1cm) {\tiny $1$};
}
,
\qquad
c_1=
\tik{0.5}{%
\path (0,0) coordinate (origin);
\path (90:0.5cm) coordinate (c1);
\path (18:0.5cm) coordinate (c2);
\path (-54:0.5cm) coordinate (c3);
\path (-126:0.5cm) coordinate (c4);
\path (162:0.5cm) coordinate (c5);
\path (90:1cm) coordinate (e1);
\path (18:1cm) coordinate (e2);
\path (-54:1cm) coordinate (e3);
\path (-126:1cm) coordinate (e4);
\path (162:1cm) coordinate (e5);
\path (126:0.6cm) coordinate (m1);
\path (126:0.2cm) coordinate (m2);
\path (-18:0.6cm) coordinate (n1);
\draw[thick] (e5) -- (n1);
\draw[cross] (e4) to [bend right=30] (e1);
\draw[thick] (e3) -- (n1) -- (e2);
\node at (150:1cm) {\tiny $1$};
}
,
\]
and the rest are cyclic permutations of them. But from relation $\gamma_2$,  we obtain
\[
2
\tik{0.5}{%
\path (0,0) coordinate (origin);
\path (90:0.5cm) coordinate (c1);
\path (18:0.5cm) coordinate (c2);
\path (-54:0.5cm) coordinate (c3);
\path (-126:0.5cm) coordinate (c4);
\path (162:0.5cm) coordinate (c5);
\path (90:1cm) coordinate (e1);
\path (18:1cm) coordinate (e2);
\path (-54:1cm) coordinate (e3);
\path (-126:1cm) coordinate (e4);
\path (162:1cm) coordinate (e5);
\path (126:0.6cm) coordinate (m1);
\path (126:0.2cm) coordinate (m2);
\path (-18:0.6cm) coordinate (n1);
\draw[thick] (e5) -- (n1);
\draw[cross] (e4) to [bend right=30] (e1);
\draw[thick] (e3) -- (n1) -- (e2);
%
%\node at (152:1cm) {\tiny $1$};
}
=
\tik{0.5}{%
\path (0,0) coordinate (origin);
\path (90:0.5cm) coordinate (c1);
\path (18:0.5cm) coordinate (c2);
\path (-54:0.5cm) coordinate (c3);
\path (-126:0.5cm) coordinate (c4);
\path (162:0.5cm) coordinate (c5);
\path (90:1cm) coordinate (e1);
\path (18:1cm) coordinate (e2);
\path (-54:1cm) coordinate (e3);
\path (-126:1cm) coordinate (e4);
\path (162:1cm) coordinate (e5);
\draw[thick] (e1) .. controls (c1) and (c1) .. (c5);
\draw[thick] (e2) .. controls (c2) and (c2) .. (c3);
\draw[thick] (c5) -- (c4) -- (c3) ;
\draw[thick] (c5) -- (e5) (c4) -- (e4) (c3) -- (e3) ;
%
%\node at (152:1cm) {\tiny $1$};
}
+
\tik{0.5}{%
\path (0,0) coordinate (origin);
\path (90:0.5cm) coordinate (c1);
\path (18:0.5cm) coordinate (c2);
\path (-54:0.5cm) coordinate (c3);
\path (-126:0.5cm) coordinate (c4);
\path (162:0.5cm) coordinate (c5);
\path (90:1cm) coordinate (e1);
\path (18:1cm) coordinate (e2);
\path (-54:1cm) coordinate (e3);
\path (-126:1cm) coordinate (e4);
\path (162:1cm) coordinate (e5);
\draw[thick] (e3) .. controls (c3) and (c3) .. (c2);
\draw[thick] (e4) .. controls (c4) and (c4) .. (c5);
\draw[thick] (c2) -- (c1) -- (c5) ;
\draw[thick] (c2) -- (e2) (c1) -- (e1) (c5) -- (e5) ;
%
%\node at (152:1cm) {\tiny $1$};
}
+
\tik{0.5}{%
\path (0,0) coordinate (origin);
\path (90:0.5cm) coordinate (c1);
\path (18:0.5cm) coordinate (c2);
\path (-54:0.5cm) coordinate (c3);
\path (-126:0.5cm) coordinate (c4);
\path (162:0.5cm) coordinate (c5);
\path (90:1cm) coordinate (e1);
\path (18:1cm) coordinate (e2);
\path (-54:1cm) coordinate (e3);
\path (-126:1cm) coordinate (e4);
\path (162:1cm) coordinate (e5);
\draw[thick] (e2) .. controls (c2) and (c2) .. (c3);
\draw[thick] (e4) .. controls (c4) and (c4) .. (c3);
\draw[thick]  (c5) -- (c1) ;
\draw[thick]  (c3) -- (e3) (c5) -- (e5) (c1) -- (e1)  ;
%
%\node at (152:1cm) {\tiny $1$};
}
+
\tik{0.5}{%
\path (0,0) coordinate (origin);
\path (90:0.5cm) coordinate (c1);
\path (18:0.5cm) coordinate (c2);
\path (-54:0.5cm) coordinate (c3);
\path (-126:0.5cm) coordinate (c4);
\path (162:0.5cm) coordinate (c5);
\path (90:1cm) coordinate (e1);
\path (18:1cm) coordinate (e2);
\path (-54:1cm) coordinate (e3);
\path (-126:1cm) coordinate (e4);
\path (162:1cm) coordinate (e5);
\draw[thick] (e1) .. controls (c1) and (c1) .. (c2);
\draw[thick] (e3) .. controls (c3) and (c3) .. (c2);
\draw[thick]  (c4) -- (c5) ;
\draw[thick]  (c2) -- (e2) (c4) -- (e4) (c5) -- (e5)  ;
%
%\node at (152:1cm) {\tiny $1$};
}
\]
that is:
\[
2c_1=a_5+a_2+b_4+b_3.
\]
Permuting cyclically,  we get:
\[
\begin{split}
2c_1&=a_2+a_5+b_3+b_4,\\
-2c_2&=-a_3-a_1-b_4-b_5,\\
4c_5&=2a_1+2a_4+2b_2+2b_3,
\end{split}
\]
and, therefore,
\[
\begin{split}
\tik{0.5}{%
\path (0,0) coordinate (origin);
\path (90:0.5cm) coordinate (c1);
\path (18:0.5cm) coordinate (c2);
\path (-54:0.5cm) coordinate (c3);
\path (-126:0.5cm) coordinate (c4);
\path (162:0.5cm) coordinate (c5);
\path (90:1cm) coordinate (e1);
\path (18:1cm) coordinate (e2);
\path (-54:1cm) coordinate (e3);
\path (-126:1cm) coordinate (e4);
\path (162:1cm) coordinate (e5);
\draw[thick] (c1) -- (c2) -- (c3) -- (c4) -- (c5) -- cycle;
\draw[thick] (c1) -- (e1) (c2) -- (e2) (c3) -- (e3) (c4) -- (e4) (c5) -- (e5);
}
&= 2a_3-a_4-3b_2-4b_3-b_1-b_4+2c_1-2c_2+4c_5\\
&=\left[ a_1+a_2+a_3+a_4+a_5\right] -\left[ b_1+b_2+b_3+b_4+b_5\right],
\end{split}
\]
that is,
\[
\begin{split}
\tik{0.5}{%
\path (0,0) coordinate (origin);
\path (90:0.5cm) coordinate (c1);
\path (18:0.5cm) coordinate (c2);
\path (-54:0.5cm) coordinate (c3);
\path (-126:0.5cm) coordinate (c4);
\path (162:0.5cm) coordinate (c5);
\path (90:1cm) coordinate (e1);
\path (18:1cm) coordinate (e2);
\path (-54:1cm) coordinate (e3);
\path (-126:1cm) coordinate (e4);
\path (162:1cm) coordinate (e5);
\draw[thick] (c1) -- (c2) -- (c3) -- (c4) -- (c5) -- cycle;
\draw[thick] (c1) -- (e1) (c2) -- (e2) (c3) -- (e3) (c4) -- (e4) (c5) -- (e5);
}
&=
\left[\ \vrule width 0pt depth 15pt 
\tik{0.5}{%
\path (0,0) coordinate (origin);
\path (90:0.5cm) coordinate (c1);
\path (18:0.5cm) coordinate (c2);
\path (-54:0.5cm) coordinate (c3);
\path (-126:0.5cm) coordinate (c4);
\path (162:0.5cm) coordinate (c5);
\path (90:1cm) coordinate (e1);
\path (18:1cm) coordinate (e2);
\path (-54:1cm) coordinate (e3);
\path (-126:1cm) coordinate (e4);
\path (162:1cm) coordinate (e5);
\draw[thick] (e2) .. controls (c2) and (c2) .. (c1);
\draw[thick] (e3) .. controls (c3) and (c3) .. (c4);
\draw[thick] (c1) -- (c5) -- (c4) ;
\draw[thick] (c1) -- (e1) (c5) -- (e5) (c4) -- (e4) ;
}
+
\tik{0.5}{%
\path (0,0) coordinate (origin);
\path (90:0.5cm) coordinate (c1);
\path (18:0.5cm) coordinate (c2);
\path (-54:0.5cm) coordinate (c3);
\path (-126:0.5cm) coordinate (c4);
\path (162:0.5cm) coordinate (c5);
\path (90:1cm) coordinate (e1);
\path (18:1cm) coordinate (e2);
\path (-54:1cm) coordinate (e3);
\path (-126:1cm) coordinate (e4);
\path (162:1cm) coordinate (e5);
\draw[thick] (e3) .. controls (c3) and (c3) .. (c2);
\draw[thick] (e4) .. controls (c4) and (c4) .. (c5);
\draw[thick] (c2) -- (c1) -- (c5) ;
\draw[thick] (c2) -- (e2) (c1) -- (e1) (c5) -- (e5) ;
}
+
\tik{0.5}{%
\path (0,0) coordinate (origin);
\path (90:0.5cm) coordinate (c1);
\path (18:0.5cm) coordinate (c2);
\path (-54:0.5cm) coordinate (c3);
\path (-126:0.5cm) coordinate (c4);
\path (162:0.5cm) coordinate (c5);
\path (90:1cm) coordinate (e1);
\path (18:1cm) coordinate (e2);
\path (-54:1cm) coordinate (e3);
\path (-126:1cm) coordinate (e4);
\path (162:1cm) coordinate (e5);
\draw[thick] (e4) .. controls (c4) and (c4) .. (c3);
\draw[thick] (e5) .. controls (c5) and (c5) .. (c1);
\draw[thick] (c3) -- (c2) -- (c1) ;
\draw[thick] (c3) -- (e3) (c2) -- (e2) (c1) -- (e1) ;
}
+
\tik{0.5}{%
\path (0,0) coordinate (origin);
\path (90:0.5cm) coordinate (c1);
\path (18:0.5cm) coordinate (c2);
\path (-54:0.5cm) coordinate (c3);
\path (-126:0.5cm) coordinate (c4);
\path (162:0.5cm) coordinate (c5);
\path (90:1cm) coordinate (e1);
\path (18:1cm) coordinate (e2);
\path (-54:1cm) coordinate (e3);
\path (-126:1cm) coordinate (e4);
\path (162:1cm) coordinate (e5);
\draw[thick] (e5) .. controls (c5) and (c5) .. (c4);
\draw[thick] (e1) .. controls (c1) and (c1) .. (c2);
\draw[thick] (c4) -- (c3) -- (c2) ;
\draw[thick] (c4) -- (e4) (c3) -- (e3) (c2) -- (e2) ;
}
+\tik{0.5}{%
\path (0,0) coordinate (origin);
\path (90:0.5cm) coordinate (c1);
\path (18:0.5cm) coordinate (c2);
\path (-54:0.5cm) coordinate (c3);
\path (-126:0.5cm) coordinate (c4);
\path (162:0.5cm) coordinate (c5);
\path (90:1cm) coordinate (e1);
\path (18:1cm) coordinate (e2);
\path (-54:1cm) coordinate (e3);
\path (-126:1cm) coordinate (e4);
\path (162:1cm) coordinate (e5);
\draw[thick] (e1) .. controls (c1) and (c1) .. (c5);
\draw[thick] (e2) .. controls (c2) and (c2) .. (c3);
\draw[thick] (c5) -- (c4) -- (c3) ;
\draw[thick] (c5) -- (e5) (c4) -- (e4) (c3) -- (e3) ;
}
\ \right]\\
&\quad 
-\left[\ \vrule width 0pt depth 15pt 
\tik{0.5}{%
\path (0,0) coordinate (origin);
\path (90:0.5cm) coordinate (c1);
\path (18:0.5cm) coordinate (c2);
\path (-54:0.5cm) coordinate (c3);
\path (-126:0.5cm) coordinate (c4);
\path (162:0.5cm) coordinate (c5);
\path (90:1cm) coordinate (e1);
\path (18:1cm) coordinate (e2);
\path (-54:1cm) coordinate (e3);
\path (-126:1cm) coordinate (e4);
\path (162:1cm) coordinate (e5);
\draw[thick] (e4) .. controls (c4) and (c4) .. (c5);
\draw[thick] (e1) .. controls (c1) and (c1) .. (c5);
\draw[thick]  (c2) -- (c3) ;
\draw[thick]  (c5) -- (e5) (c2) -- (e2) (c3) -- (e3)  ;
}
+
\tik{0.5}{%
\path (0,0) coordinate (origin);
\path (90:0.5cm) coordinate (c1);
\path (18:0.5cm) coordinate (c2);
\path (-54:0.5cm) coordinate (c3);
\path (-126:0.5cm) coordinate (c4);
\path (162:0.5cm) coordinate (c5);
\path (90:1cm) coordinate (e1);
\path (18:1cm) coordinate (e2);
\path (-54:1cm) coordinate (e3);
\path (-126:1cm) coordinate (e4);
\path (162:1cm) coordinate (e5);
\draw[thick] (e5) .. controls (c5) and (c5) .. (c1);
\draw[thick] (e2) .. controls (c2) and (c2) .. (c1);
\draw[thick]  (c3) -- (c4) ;
\draw[thick]  (c1) -- (e1) (c3) -- (e3) (c4) -- (e4)  ;
}
+
\tik{0.5}{%
\path (0,0) coordinate (origin);
\path (90:0.5cm) coordinate (c1);
\path (18:0.5cm) coordinate (c2);
\path (-54:0.5cm) coordinate (c3);
\path (-126:0.5cm) coordinate (c4);
\path (162:0.5cm) coordinate (c5);
\path (90:1cm) coordinate (e1);
\path (18:1cm) coordinate (e2);
\path (-54:1cm) coordinate (e3);
\path (-126:1cm) coordinate (e4);
\path (162:1cm) coordinate (e5);
\draw[thick] (e1) .. controls (c1) and (c1) .. (c2);
\draw[thick] (e3) .. controls (c3) and (c3) .. (c2);
\draw[thick]  (c4) -- (c5) ;
\draw[thick]  (c2) -- (e2) (c4) -- (e4) (c5) -- (e5)  ;
}
+
\tik{0.5}{%
\path (0,0) coordinate (origin);
\path (90:0.5cm) coordinate (c1);
\path (18:0.5cm) coordinate (c2);
\path (-54:0.5cm) coordinate (c3);
\path (-126:0.5cm) coordinate (c4);
\path (162:0.5cm) coordinate (c5);
\path (90:1cm) coordinate (e1);
\path (18:1cm) coordinate (e2);
\path (-54:1cm) coordinate (e3);
\path (-126:1cm) coordinate (e4);
\path (162:1cm) coordinate (e5);
\draw[thick] (e2) .. controls (c2) and (c2) .. (c3);
\draw[thick] (e4) .. controls (c4) and (c4) .. (c3);
\draw[thick]  (c5) -- (c1) ;
\draw[thick]  (c3) -- (e3) (c5) -- (e5) (c1) -- (e1)  ;
}
+
\tik{0.5}{%
\path (0,0) coordinate (origin);
\path (90:0.5cm) coordinate (c1);
\path (18:0.5cm) coordinate (c2);
\path (-54:0.5cm) coordinate (c3);
\path (-126:0.5cm) coordinate (c4);
\path (162:0.5cm) coordinate (c5);
\path (90:1cm) coordinate (e1);
\path (18:1cm) coordinate (e2);
\path (-54:1cm) coordinate (e3);
\path (-126:1cm) coordinate (e4);
\path (162:1cm) coordinate (e5);
\draw[thick] (e3) .. controls (c3) and (c3) .. (c4);
\draw[thick] (e5) .. controls (c5) and (c5) .. (c4);
\draw[thick]  (c1) -- (c2) ;
\draw[thick]  (c4) -- (e4) (c1) -- (e1) (c2) -- (e2)  ;
}
\ \right]
\end{split}
\]
\end{itemize}
The reader may compare the relations obtained so far with the results in  \cite[Theorem 1.1]{Kup2} and  \cite{Kup1} for $q=1$.

\begin{remark} 
The $3$-tangles $[n]\rightarrow [0]$ without subgraphs of the form
\begin{equation}\label{eq:avoided}
\tik{0.5}{%
\draw[thick] (0,0) circle (0.5cm);
}\ ,\quad
\tik{0.5}{%
\draw[thick] (0,0) -- (0,-0.5);
\draw[thick] (0,0.5) circle (0.5cm);
}\ ,\quad
\tik{0.5}{%
\draw[thick] (0,-0.5) -- (0,-1);
\draw[thick] (0,0) circle (0.5cm);
\draw[thick] (0,0.5) -- (0,1);
}\ ,\quad 
\tik{0.5}{%
\path (0,0) coordinate (origin);
\path (150:0.5cm) coordinate (t1);
\path (30:0.5cm) coordinate (t2);
\path (-90: 0.5cm) coordinate (t3);
\path (150:1cm) coordinate (e1);
\path (30:1cm) coordinate (e2);
\path (-90:1cm) coordinate (e3);
\draw[thick] (t1) -- (t2) -- (t3) -- cycle;
\draw[thick] (e1) -- (t1) (e2) -- (t2) (e3) -- (t3);
}\ ,\quad
\tik{0.5}{%
\path (0,0) coordinate (origin);
\path (135:0.5cm) coordinate (c1);
\path (45:0.5cm) coordinate (c2);
\path (-45: 0.5cm) coordinate (c3);
\path (-135: 0.5cm) coordinate (c4);
\path (135:1cm) coordinate (e1);
\path (45:1cm) coordinate (e2);
\path (-45:1cm) coordinate (e3);
\path (-135:1cm) coordinate (e4);
\draw[thick] (c1) -- (c2) -- (c3) -- (c4) -- cycle;
\draw[thick] (c1) -- (e1) (c2) -- (e2) (c3) -- (e3) (c4) -- (e4);
}\ ,\quad
\tik{0.5}{%
\path (0,0) coordinate (origin);
\path (90:0.5cm) coordinate (c1);
\path (18:0.5cm) coordinate (c2);
\path (-54:0.5cm) coordinate (c3);
\path (-126:0.5cm) coordinate (c4);
\path (162:0.5cm) coordinate (c5);
\path (90:1cm) coordinate (e1);
\path (18:1cm) coordinate (e2);
\path (-54:1cm) coordinate (e3);
\path (-126:1cm) coordinate (e4);
\path (162:1cm) coordinate (e5);
\draw[thick] (c1) -- (c2) -- (c3) -- (c4) -- (c5) -- cycle;
\draw[thick] (c1) -- (e1) (c2) -- (e2) (c3) -- (e3) (c4) -- (e4) (c5) -- (e5);
}\ ,
\end{equation}
correspond bijectively with the \emph{nonpositive diagrams} of \cite[Definition 4.2]{Westbury2007}.  Steps \textbf{1--6} show that such $3$-tangles form a spanning set of $\Mor_{\cT_\Gamma}([n],[0])$.
\end{remark}

\begin{theorem}\label{th:G2bis}
Let $n,m\in\NN$,  and assume that the characteristic of $\FF$ is $0$.   Let $\Gamma = \{\gamma_0,\gamma_1, \gamma_2\}$, 
where $\gamma_0$ is as in \eqref{eq:gamma0}, $\gamma_1$ as in \eqref{eq:gamma1}, and $\gamma_2$ as in \eqref{eq:gamma2}. 
\begin{itemize}
\item[{\rm (a)}]  
The classes modulo $\Gamma$ of the $3$-tangles $[n]\rightarrow [m]$ without crossings and without any of the subgraphs in \eqref{eq:avoided} form a basis of $\Mor_{\cT_\Gamma}([n],[m])$.
\item[{\rm (b)}]  
The functor $\cR_\Gamma$ from Corollary \ref{co:RGamma} gives a linear isomorphism 
\[
\Mor_{\cT_\Gamma}([n],[m])\rightarrow \Hom_{\Aut(\VV,\times)}\bigl(\VV^{\otimes n},\VV^{\otimes m}\bigr),
\]
where $\frV = (\VV,\mathsf{b}, \times)$  is  a 7-dimensional vector space $\VV$ endowed with a nonzero cross product $x\times y$ relative to the nondegenerate symmetric bilinear form $\mathsf{b}$,  and $\Aut(\VV,\times)$ is a simple algebraic group of type $\GG_2$. 
\item [{\rm (c)}]   The $3$-tangles $[n]\rightarrow [n]$ as in part (a)  give a basis of the centralizer algebra $$\End_{\Aut(\VV,\times)}\bigl(\VV^{\otimes n}\bigr)\simeq \Mor_{\cT_\Gamma}([n],[n]).$$ 
%Multiplication in this basis corresponds to composing by bordism and using Steps 1--6 of
%\eqref{eq:steps}  to eliminate circles, triangles, squares and pentagons.
\end{itemize}
\end{theorem}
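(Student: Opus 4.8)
The plan is to reduce all three assertions to the case $m=0$, establish there a numerical coincidence between a combinatorial count and an invariant-theoretic dimension, and then transport the conclusions back. First I would observe that the bijections $\Phi_{n,m}$ and $\Psi_{n,m}$ of \eqref{eq:Phinm_bis}--\eqref{eq:Psinm_bis} descend to mutually inverse bijections $\Mor_{\cT_\Gamma}([n],[m])\leftrightarrow\Mor_{\cT_\Gamma}([n+m],[0])$, since they are built by composing and forming disjoint unions with fixed $3$-tangles, an operation preserving the subspace $\R_\Gamma$. Bending the bottom $m$ strands around is a reversible planar move leaving the internal combinatorial structure of a diagram untouched, so it restricts to a bijection between the crossingless, \eqref{eq:avoided}-free $3$-tangles $[n]\to[m]$ and those $[n+m]\to[0]$. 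Moreover, since $\cR_\Gamma$ is a functor with $\cR_\Gamma(\beta_m)=\mathsf{b}_m$ and $\cR_\Gamma(\II_n)=\id_n$, it intertwines \eqref{eq:Phinm_bis} with the linear isomorphism $\Phi_{n,m}$ of Proposition \ref{pr:tPhiPsi}; in particular $\Hom_{\Aut(\VV,\times)}(\VV^{\otimes n},\VV^{\otimes m})\cong\Hom_{\Aut(\VV,\times)}(\VV^{\otimes(n+m)},\FF)$. Hence it suffices to prove (a) and (b) for $m=0$, and (c) will fall out as the case $m=n$.

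So fix $m=0$, put $D=\dim_\FF\Hom_{\Aut(\VV,\times)}(\VV^{\otimes n},\FF)$, and let $N$ be the number of crossingless $3$-tangles $[n]\to[0]$ avoiding every subgraph in \eqref{eq:avoided}. The reduction Steps \textbf{1--6} (recorded in the Remark above) show these $N$ $3$-tangles span $\Mor_{\cT_\Gamma}([n],[0])$, so $\dim_\FF\Mor_{\cT_\Gamma}([n],[0])\le N$; and Theorem \ref{th:G2} says $\cR_\Gamma\colon\Mor_{\cT_\Gamma}([n],[0])\to\Hom_{\Aut(\VV,\times)}(\VV^{\otimes n},\FF)$ is surjective, so $\dim_\FF\Mor_{\cT_\Gamma}([n],[0])\ge D$. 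Therefore, once we know $N=D$, all of these become equalities: the $N$ spanning $3$-tangles are linearly independent, hence a basis of $\Mor_{\cT_\Gamma}([n],[0])$, which is (a) for $m=0$; and $\cR_\Gamma$ becomes a surjection between $\FF$-spaces of equal finite dimension, hence an isomorphism, which is (b) for $m=0$.

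The heart of the matter is thus the identity $N=D$, and this is where I expect the real work to lie. By the Remark, the $N$ $3$-tangles in question are in bijection with the nonpositive diagrams on $n$ points of \cite{Westbury2007}, equivalently the nonelliptic $\GG_2$-webs appearing in Kuperberg's $\GG_2$ spider \cite{Kup2} (see also \cite{Kup1}), which form a basis of the corresponding $\QQ(q)$-morphism space; hence $N$ equals the dimension of the space of $U_q(\GG_2)$-invariants in $\VV_q^{\otimes n}$ for generic $q$. That dimension is independent of $q$ away from roots of unity and, because $\charac\FF=0$, specializes at $q=1$ to $\dim_\FF\Hom_{\Aut(\VV,\times)}(\VV^{\otimes n},\FF)=D$, the representation theory of the simple algebraic group of type $\GG_2$ over $\FF$ agreeing with that of $U_q(\GG_2)$ for generic $q$. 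Checking carefully that no dimension is lost under this specialization — precisely where the characteristic $0$ hypothesis is used — is the main obstacle; granting it, the two inequalities of the previous paragraph collapse and (a), (b) follow for $m=0$, hence, via the first paragraph, for all $m$.

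Finally, for (c) take $n=m$. By (b) the functor $\cR_\Gamma$ gives a linear isomorphism $\Mor_{\cT_\Gamma}([n],[n])\to\Hom_{\Aut(\VV,\times)}(\VV^{\otimes n},\VV^{\otimes n})=\End_{\Aut(\VV,\times)}(\VV^{\otimes n})$, and since a functor carries composition of $3$-tangles to composition of linear maps, this is an isomorphism of associative algebras; by (a) it carries the crossingless, \eqref{eq:avoided}-free $3$-tangles $[n]\to[n]$ to a basis of $\End_{\Aut(\VV,\times)}(\VV^{\otimes n})$, which is the assertion.
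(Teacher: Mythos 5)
Your argument is correct and is essentially the proof the paper gives: both rest on the sandwich ``spanning set of size $N$ from Steps \textbf{1--6}, surjectivity of $\cR_\Gamma$ from Theorem \ref{th:G2}, and the external identity $N=D$ from \cite[Theorems 5.1 and 6.10]{Kup1} or \cite{Westbury2007}'', followed by transport along $\Phi_{n,m}$. The only cosmetic difference is that you unpack the citation via the generic-$q$ spider and specialization at $q=1$, whereas the paper simply invokes those results as valid in characteristic $0$.
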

\begin{proof}  
The results in 
\cite[Theorems 5.1 and 6.10]{Kup1} or \cite{Westbury2007} (which are valid in characteristic $0$) show that for any $n\in\NN$,  the dimension of $\Hom_{\Aut(\VV,\times)}\bigl(\VV^{\otimes (n+m)},\FF\bigr)$ coincides with the number of $3$-tangles $[n+m]\rightarrow [0]$ without crossings and subgraphs in \eqref{eq:avoided}. Since $\cR_\Gamma$ gives a surjective linear isomorphism $\Mor_{\cT_\Gamma}([n+m],[0])\rightarrow \Hom_{\Aut(\VV,\times)}\bigl(\VV^{\otimes (n+m)},\FF\bigr)$, this proves that this map is a bijection.

The theorem now follows using the linear isomorphisms 
\[
\Phi_{n,m}:\Hom_\FF\bigl(\VV^{\otimes n},\VV^{\otimes m}\bigr)\rightarrow \Hom_\FF\bigl(\VV^{\otimes (n+m)},\FF\bigr)
\]
in Proposition \ref{pr:tPhiPsi} and the bijection 
\[
\Phi_{n,m}:\Mor_{\cT_\Gamma}([n],[m])\rightarrow \Mor_{\cT_\Gamma}([n+m],[0])
\] 
induced by the one in \eqref{eq:Phinm}, noting that this bijection sends $3$-tangles $[n]\rightarrow [m]$ without crossings and any 
of the subgraphs in \eqref{eq:avoided} to $3$-tangles $[n+m]\rightarrow [0]$ with the same properties.
\end{proof}

\begin{remark}The sequence $c(n)=\dimm_\FF\Mor_{\cT_\Gamma}([n],[0])$ is the sequence $\#$A059710 in \cite{OEIS}. The dimension of $\Mor_{\cT_\Gamma}([n],[m])\simeq \Mor_{\cT_\Gamma}([n+m],[0])$ is then $c(n+m)$.  \end{remark}  

\begin{remark}\label{rem:mult}  Multiplication in the basis of part (c) of this theorem corresponds to composing by bordism and applying
Steps \textbf{1-6} of \eqref{eq:steps}.    One has to suppress circles and cycles of arbitrary length and
normalize the resulting $3$-tangles  using the arguments prior  to Theorem \ref{th:G2}. 
\end{remark}

\begin{remark} Relation $\gamma_2$ gives that in $\cT_\Gamma$, 
\[
\tik{0.5}{%
\path (0,0) coordinate (origin);
\path (134:1cm) coordinate (e1);
\path (54:1cm) coordinate (e2);
\path (-18:1cm) coordinate (e3);
\path (-90:1cm) coordinate (e4);
\path (-162:1cm) coordinate (e5);
\path (90:0.5cm) coordinate (m);
\path (-54:0.5cm) coordinate (n);
\draw[thick] (e1) -- (m) -- (e2); 
\draw[thick] (e4) -- (n) -- (e3); 
\draw[thick] (m) -- (origin) (n) -- (origin) (e5) -- (origin);
\draw[thin,dotted,rotate=-54] (0.3,0) ellipse (0.6cm and 0.4cm);
}
\ =\ 
-
\tik{0.5}{%
\path (0,0) coordinate (origin);
\path (134:1cm) coordinate (e1);
\path (54:1cm) coordinate (e2);
\path (-18:1cm) coordinate (e3);
\path (-90:1cm) coordinate (e4);
\path (-162:1cm) coordinate (e5);
\path (90:0.5cm) coordinate (m);
\path (-126:0.5cm) coordinate (n);
\draw[thick] (e1) -- (m) -- (e2); 
\draw[thick] (e4) -- (n) -- (e5); 
\draw[thick] (m) -- (origin) (n) -- (origin) (e3) -- (origin);
}
\ +\ 
\begin{matrix}
\text{\small linear combination of $3$-tangles}\\[-2pt]
\text{\small with a lower number of trivalent nodes}\\[-2pt]
\text{\small (although with crossings),}
\end{matrix}
\]
but the graph on the right is obtained by rotating counterclockwise the graph on the left an angle of $\frac{4\pi}{3}$.
If we then repeat the argument five times,  we obtain
\[
\tik{0.5}{%
\path (0,0) coordinate (origin);
\path (134:1cm) coordinate (e1);
\path (54:1cm) coordinate (e2);
\path (-18:1cm) coordinate (e3);
\path (-90:1cm) coordinate (e4);
\path (-162:1cm) coordinate (e5);
\path (90:0.5cm) coordinate (m);
\path (-54:0.5cm) coordinate (n);
\draw[thick] (e1) -- (m) -- (e2); 
\draw[thick] (e4) -- (n) -- (e3); 
\draw[thick] (m) -- (origin) (n) -- (origin) (e5) -- (origin);
}
\ =\ 
-
\tik{0.5}{%
\path (0,0) coordinate (origin);
\path (134:1cm) coordinate (e1);
\path (54:1cm) coordinate (e2);
\path (-18:1cm) coordinate (e3);
\path (-90:1cm) coordinate (e4);
\path (-162:1cm) coordinate (e5);
\path (90:0.5cm) coordinate (m);
\path (-54:0.5cm) coordinate (n);
\draw[thick] (e1) -- (m) -- (e2); 
\draw[thick] (e4) -- (n) -- (e3); 
\draw[thick] (m) -- (origin) (n) -- (origin) (e5) -- (origin);
}
\ +\ 
\begin{matrix}
\text{\small linear combination of $3$-tangles}\\[-2pt]
\text{\small with a lower number of trivalent nodes,}
\end{matrix}
\]
so we get
\[
\tik{0.5}{%
\path (0,0) coordinate (origin);
\path (134:1cm) coordinate (e1);
\path (54:1cm) coordinate (e2);
\path (-18:1cm) coordinate (e3);
\path (-90:1cm) coordinate (e4);
\path (-162:1cm) coordinate (e5);
\path (90:0.5cm) coordinate (m);
\path (-54:0.5cm) coordinate (n);
\draw[thick] (e1) -- (m) -- (e2); 
\draw[thick] (e4) -- (n) -- (e3); 
\draw[thick] (m) -- (origin) (n) -- (origin) (e5) -- (origin);
}
\ =\ 
\begin{matrix}
\text{\small linear combination of $3$-tangles}\\[-2pt]
\text{\small with a lower number of trivalent nodes.}
\end{matrix}
\]

This proves that $\Mor_{\cT_\Gamma}([n],[0])$ is spanned by $3$-tangles where crossings are allowed, but where each connected component has at most two trivalent nodes, so each connected component is of the form:
\begin{itemize}
\item
$
\tik{0.5}{%
\node[punto] (a1) at (1,0) [label=above: {\tiny $i$}] {};
\node[punto] (a2) at (2,0) [label=above: {\tiny $j$}] {};
\path (1.5,-0.5) coordinate (b1);
\draw[thick] (a1) .. controls (1,-0.3) and (1.2,-0.5) .. (b1);
\draw[thick] (a2) .. controls (2,-0.3) and (1.8,-0.5) .. (b1);
\draw[very thin] (0.5,0) -- (2.5,0);
}
$, which corresponds under $\cR_\Gamma$ to $\mathsf{b}( v_i,v_j)$,

\item
$
\tik{0.5}{%
\node[punto] (a1) at (1,0) [label=above: {\tiny $i$}] {};
\node[punto] (a2) at (2,0) [label=above: {\tiny $j$}] {};
\node[punto] (a3) at (3,0) [label=above: {\tiny $k$}] {};
\path (1.5,-0.5) coordinate (b1);
\draw[thick] (a1) .. controls (1,-0.3) and (1.2,-0.5) .. (b1);
\draw[thick] (a2) .. controls (2,-0.3) and (1.8,-0.5) .. (b1);
\draw[thick] (b1) .. controls (1.5,-0.8) and (3,-1) .. (a3);
\draw[very thin] (0.5,0)-- (3.5,0);
}
$, which corresponds under $\cR_\Gamma$ to $\mathsf{b}( v_i\times v_j,v_k)$, and

\item
$
\tik{0.5}{%
\node[punto] (a1) at (1,0) [label=above: {\tiny $i$}] {};
\node[punto] (a2) at (2,0) [label=above: {\tiny $j$}] {};
\node[punto] (a3) at (3,0) [label=above: {\tiny $k$}] {};
\node[punto] (a4) at (4,0) [label=above: {\tiny $l$}] {};
\path (1.5,-0.5) coordinate (b1);
\path (3.5,-0.5) coordinate (b2);
\path (2.5,-1.5) coordinate (c);
\draw[thick] (a1) .. controls (1,-0.3) and (1.2,-0.5) .. (b1);
\draw[thick] (a2) .. controls (2,-0.3) and (1.8,-0.5) .. (b1);
\draw[thick] (a3) .. controls (3,-0.3) and (3.2,-0.5) .. (b2);
\draw[thick] (a4) .. controls (4,-0.3) and (3.8,-0.5) .. (b2);
\draw[thick] (b1) .. controls (1.5,-1) and (3.5,-1) .. (b2);
\draw[very thin] (0.5,0)-- (4.5,0);
}
$, which corresponds under $\cR_\Gamma$ to $\mathsf{b}( v_i\times v_j,v_k\times v_l)$.
\end{itemize}

Hence, from the fact that $\cR_\Gamma$ gives a linear surjection $\Mor_{\cT_\Gamma}([n],[0])\rightarrow \Hom_{\Aut(\VV,\times)}\bigl(\VV^{\otimes n},\FF\bigr)$, which follows from the arguments in the proof of \cite[Theorem 5.1]{Kup1}, we recover the First Fundamental Theorem of Invariant Theory for $\GG_2$ in characteristic $0$ \cite{Schwarz}, used at the beginning of  Theorem \ref{th:G2}.

The $3$-tangles with crossings and connected components with at most two trivalent nodes do not form a basis of the spaces $\Mor_{\cT_\Gamma}([n],[m])$.
\end{remark}

%-------------------------------
%
%   dimension 3
%
%-------------------------------     

\section{Three-dimensional cross products}\label{se:3_dim}

In this section, $\frV=(\VV,\mathsf{b},\times)$ will consist of a $3$-dimensional vector space over a field $\FF$ of characteristic $\ne 2$, endowed with a nonzero cross product $x\times y$, relative to a nondegenerate symmetric bilinear form $\mathsf{b}$.

Then the following conditions hold:
\begin{subequations}\label{eq:CPbis}
\begin{align}
& \mathsf{b}( x\times y,z)=\mathsf{b}( x,y\times z),\label{eq:CPbis1}\\
& x\times y =-y\times x,\label{eq:CPbis2}\\
& (x\times y)\times z=\mathsf{b}( x,z)y-\mathsf{b}( y,z)x,\label{eq:CPbis3}
\end{align}
\end{subequations}
for all $x,y,z\in \VV$. Conditions \eqref{eq:CPbis2} and \eqref{eq:CPbis3} can be expressed as \\  $\wh{\mathsf{c}}_1(x_1\otimes x_2)=0=
\wh{\mathsf{c}}_2(x_1\otimes x_2\otimes x_3)$ for any $x_1,x_2,x_3\in \VV$, where
\begin{equation}\label{eq:c3}
\begin{split}
&\wh{\mathsf{c}}_1(x_1\otimes x_2)=x_1\times x_2+x_2\times x_1,\\
&\wh{\mathsf{c}}_2(x_1\otimes x_2\otimes x_3)=(x_1\times x_2)\times x_3-\mathsf{b}( x_1,x_3)x_2+\mathsf{b}( x_2,x_3)x_1,
\end{split}
\end{equation}  
so $\wh{\mathsf{c}}_1\in\Hom_\FF(\VV^{\otimes 2},\VV)$ and $\wh{\mathsf{c}}_2\in\Hom_\FF(\VV^{\otimes 3},\VV)$,
and under $\cR_{\cV}$, these are the images respectively  of 
\begin{equation}\label{eq:whgamma1} \wh\gamma_1=\mu+\mu\circ\tau\in\Mor_{\cT_2}([2],[1])\end{equation}
and of

\[
\wh\gamma_2'=\ 
\tik{0.5}{%
\draw[very thin] (0,0) -- (4,0);
\draw[very thin] (0,-2) -- (4,-2);
\path (1,0) coordinate (a1);
\path (2,0) coordinate (a2);
\path (3,0) coordinate (a3);
\path (2,-2) coordinate (b1);
\path (1.5,-0.5) coordinate (m1);
\path (2,-1.2) coordinate (m2);
\draw[thick] (a1) -- (m1) (a2) -- (m1) (a3)-- (m2) (m1) -- (m2) (m2) -- (b1);
}
-
\tik{0.5}{%
\path (1,0) coordinate (a1);
\path (2,0) coordinate (a2);
\path (3,0) coordinate (a3);
\path (2,-2) coordinate (b1);
\draw[thick] (a2) -- (b1);
\draw[cross] (a1) .. controls (1,-0.8) and (3,-0.8) .. (a3);
\draw[very thin] (0,0) -- (4,0);
\draw[very thin] (0,-2) -- (4,-2);
}
+
\tik{0.5}{%
\path (1,0) coordinate (a1);
\path (2,0) coordinate (a2);
\path (3,0) coordinate (a3);
\path (2,-2) coordinate (b1);
\draw[thick] (a1) -- (b1);
\draw[thick] (a2) to [bend right=60] (a3);
\draw[very thin] (0,0) -- (4,0);
\draw[very thin] (0,-2) -- (4,-2);
}
\ ,\qquad \wh\gamma_2'\in\Mor_{\cT_2}([3],[1]).
\]
Let us consider, instead of $\wh\gamma_2'$, the $3$-tangle
\begin{equation}\label{eq:whgamma2}
\wh\gamma_2=(\wh\gamma_2'\sqcup  \II_1)\circ(\II_2\sqcup \beta_2^t):\ 
\tik{0.5}{%
\path (1,0) coordinate (a1);
\path (3,0) coordinate (a2);
\path (1,-2) coordinate (b1);
\path (3,-2) coordinate (b2);
\path (2,-0.6) coordinate (m1);
\path (2,-1.4) coordinate (m2);
\draw[thick] (a1) -- (m1) -- (a2);
\draw[thick] (b1) -- (m2) -- (b2);
\draw[thick] (m1) -- (m2);
\draw[very thin] (0,0) -- (4,0);
\draw[very thin] (0,-2) -- (4,-2);
}
-
\tik{0.5}{%
\path (1,0) coordinate (a1);
\path (3,0) coordinate (a2);
\path (1,-2) coordinate (b1);
\path (3,-2) coordinate (b2);
\draw[thick] (a2)to (b1);
\draw[cross] (a1) to (b2);
\draw[very thin] (0,0) -- (4,0);
\draw[very thin] (0,-2) -- (4,-2);
}
+
\tik{0.5}{%
\path (1,0) coordinate (a1);
\path (3,0) coordinate (a2);
\path (1,-2) coordinate (b1);
\path (3,-2) coordinate (b2);
\draw[thick] (a1)to [bend left=60] (b1);
\draw[thick] (a2) to [bend right=60] (b2);
\draw[very thin] (0,0) -- (4,0);
\draw[very thin] (0,-2) -- (4,-2);
}\ ,
\end{equation}
where  $\wh\gamma_2\in\Mor_{\cT_2}([2],[2])$.
(Conversely, we can recover $\wh\gamma_2'$ from $\wh\gamma_2$ by composition and disjoint union.)

Also, 
\begin{equation}\label{eq:c4}
\wh{\mathsf{c}}_0=(\mathsf{b}\circ \mathsf{b}^t)(1)- \dimm_\FF\VV =(\mathsf{b}\circ \mathsf{b}^t)(1)-3\in\FF\simeq\Hom_\FF(\FF,\FF)\end{equation}
 is the image of
\begin{equation}\label{eq:whgamma0}
\wh \gamma_0:=\beta\circ\beta^t -3:\  \ 
\tik{0.5}{%
\draw[thick] (0,0) circle (0.5cm);
%\path (0,-1) coordinate (origin); %in order to center the circle
}
\ - 3\cdot 1,\qquad\qquad \wh\gamma_0\in\Mor_{\cT_2}([0],[0]).
\end{equation}

Therefore, for $\frV=(\VV,\mathsf{b},\times)$ as above, $\frV$ is of tensor type $\{\wh{\mathsf{c}}_0,
\wh{\mathsf{c}}_1,\wh{\mathsf{c}}_2\}$, and hence Corollary \ref{co:RGamma} gives a functor $\cR_{\wh \Gamma}:\cT_{\wh \Gamma}\rightarrow \cV$, with $\wh \Gamma=\{\wh \gamma_0,\wh\gamma_1,\wh \gamma_2\}$. As in the previous section, the image $\cR_{\wh \Gamma}\bigl(\Mor_{\cT_{\wh \Gamma}}([n],[m])\bigr)$ is contained in $\Hom_{\Aut(\VV,\times)}(\VV^{\otimes n},\VV^{\otimes m})$, but here $\Aut(\VV,\times)$ is the special orthogonal group $\SO(\VV,\mathsf{b})$.

\begin{theorem}
Let $\FF$ be an infinite field of characteristic $\ne 2$, and let $\wh \Gamma = \{\wh \gamma_0, \wh \gamma_1, \wh \gamma_2\}$,
for $\wh \gamma_0$ as in \eqref{eq:whgamma0}, $\wh\gamma_1$ as in \eqref{eq:whgamma1}, and $\wh \gamma_2$
as in \eqref{eq:whgamma2}.   Then for any $n,m\in\NN$,
\[
\cR_{\wh \Gamma}\bigl(\Mor_{\cT_{\wh \Gamma}}([n],[m])\bigr)=\Hom_{\SO(\VV,\mathsf{b})}(\VV^{\otimes n},\VV^{\otimes m}).
\]
\end{theorem}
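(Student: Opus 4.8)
The plan is to imitate the proof of Theorem~\ref{th:G2}, replacing the First Fundamental Theorem for $\GG_2$ by the (simpler) First Fundamental Theorem for the special orthogonal group. Since all of $1_\VV$, $\mathsf{b}$, $\mathsf{b}^t$, $\tau$ and $\times$ are $\SO(\VV,\mathsf{b})$-invariant, the inclusion $\cR_{\wh\Gamma}\bigl(\Mor_{\cT_{\wh\Gamma}}([n],[m])\bigr)\subseteq\Hom_{\SO(\VV,\mathsf{b})}(\VV^{\otimes n},\VV^{\otimes m})$ already holds, so only the reverse inclusion has to be proved. Using the linear isomorphism $\Phi_{n,m}$ of Proposition~\ref{pr:tPhiPsi} (which carries $\SO(\VV,\mathsf{b})$-invariant maps to $\SO(\VV,\mathsf{b})$-invariant maps and is compatible, via \eqref{eq:Phinm_bis}--\eqref{eq:Psinm_bis}, with the corresponding bijection on morphism spaces of $\cT_{\wh\Gamma}$), it suffices to show that every $\SO(\VV,\mathsf{b})$-invariant multilinear map $\VV^{N}\to\FF$, with $N=n+m$, lies in $\cR_{\wh\Gamma}\bigl(\Mor_{\cT_{\wh\Gamma}}([N],[0])\bigr)$, i.e.\ in the linear span of the maps built from $1_\VV$, $\mathsf{b}$, $\mathsf{b}^t$, $\tau$ and $\times$ by composition and tensor product.

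Because $\FF$ is infinite, it is enough to describe the algebra $\FF[N\VV]^{\SO(\VV,\mathsf{b})}$ of polynomial invariants and then to extract its multihomogeneous component of multidegree $(1,\dots,1)$. By the First Fundamental Theorem of Invariant Theory for the special orthogonal group of a nondegenerate symmetric bilinear form on a $3$-dimensional space, valid over any infinite field of characteristic $\ne 2$ (see \cite{Weyl} and its characteristic-free extensions), this algebra is generated by the quadratic invariants $(v_1,\dots,v_N)\mapsto\mathsf{b}(v_i,v_j)$ and the cubic bracket invariants $(v_1,\dots,v_N)\mapsto[v_i,v_j,v_k]$, where $[\,\cdot\,,\,\cdot\,,\,\cdot\,]$ is the fixed $\SO(\VV,\mathsf{b})$-invariant alternating trilinear volume form on $\VV$.

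Next I identify the volume form with the cross product. Using \eqref{eq:CPbis1}, \eqref{eq:CPbis2} and \eqref{eq:CP1}, the trilinear map $(x,y,z)\mapsto\mathsf{b}(x\times y,z)$ is skew in $x$ and $y$, vanishes for $z=x$, and vanishes for $z=y$ since $\mathsf{b}(x\times y,y)=\mathsf{b}(x,y\times y)=0$; hence it is alternating. As $\dimm_\FF\VV=3$, the space of alternating trilinear forms on $\VV$ is one-dimensional, and $\mathsf{b}(x\times y,z)$ is not identically zero because $\times\ne0$ and $\mathsf{b}$ is nondegenerate; therefore $\mathsf{b}(x\times y,z)=c\,[x,y,z]$ for some $c\in\FF^{\times}$. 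Thus $\FF[N\VV]^{\SO(\VV,\mathsf{b})}$ is generated by the maps $(v_1,\dots,v_N)\mapsto\mathsf{b}(v_i,v_j)$ and $(v_1,\dots,v_N)\mapsto\mathsf{b}(v_i\times v_j,v_k)$. Extracting the multidegree $(1,\dots,1)$ part, an $\SO(\VV,\mathsf{b})$-invariant multilinear map $\VV^{N}\to\FF$ is a linear combination of products indexed by set partitions of $\{1,\dots,N\}$ into blocks of size $2$ or $3$, a block $\{i,j\}$ contributing the factor $\mathsf{b}(v_i,v_j)$ and a block $\{i,j,k\}$ contributing the factor $\mathsf{b}(v_i\times v_j,v_k)$. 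Each such product is a composition of tensor products of $\mathsf{b}$, $\times$, $\tau$ and $1_\VV$ (the $\tau$'s serving to arrange the arguments), hence lies in $\cR_{\wh\Gamma}\bigl(\Mor_{\cT_{\wh\Gamma}}([N],[0])\bigr)$. This gives $\Hom_{\SO(\VV,\mathsf{b})}(\VV^{\otimes N},\FF)=\cR_{\wh\Gamma}\bigl(\Mor_{\cT_{\wh\Gamma}}([N],[0])\bigr)$, and applying $\Psi_{n,m}=\Phi_{n,m}^{-1}$ finishes the proof.

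The main obstacle is purely a matter of quoting the right invariant-theoretic input: one needs the First Fundamental Theorem for $\SO_3$ over an arbitrary infinite field of characteristic $\ne 2$ (not only in characteristic $0$) together with the standard polarization and restitution reduction from polynomial invariants to multilinear ones, which is where infiniteness of $\FF$ is used. Once that is in hand, the identification of the $\SO(\VV,\mathsf{b})$-bracket with $\mathsf{b}(x\times y,z)$ and the dictionary between the resulting multilinear invariants and the morphisms of $\cT_{\wh\Gamma}$ are routine.
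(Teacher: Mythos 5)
Your proposal is correct and follows essentially the same route as the paper: reduce via $\Phi_{n,m}$ to multilinear invariants $\VV^{n+m}\to\FF$, invoke the First Fundamental Theorem for the special orthogonal group (the paper cites \cite{Weyl} and the characteristic-free treatment in \cite{dCP}), and observe that the determinant/volume generator is, up to a nonzero scalar, the map $(x,y,z)\mapsto\mathsf{b}(x\times y,z)$. The paper compresses all of this into one sentence; your write-up simply supplies the details (the alternating-form identification and the multidegree $(1,\dots,1)$ extraction) that the paper leaves implicit.
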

\begin{proof}
It is enough to note that the classical invariant theory for $\SO(\VV,\mathsf{b})$ (see \cite{Weyl} or \cite[Theorem 5.6]{dCP}) shows that 
$\Hom_{\SO(\VV,\mathsf{b})}(\VV^{\otimes n},\VV^{\otimes m})$ is generated by composition and tensor products by $1_\VV$, $\mathsf{b},\mathsf{b}^t,\tau$ and $\times$, because $\mathsf{b}( x\times y,z)$ is a determinant map on $\VV$.
\end{proof}

As in Section \ref{se:7dim},  we can get rid of circles and crossings in $\cT_{\wh \Gamma}$,  and
\[
\tik{0.5}{%
\draw[thick] (0,0) -- (0,-1);
\draw[thick] (0,0.5) circle (0.5cm);
\node at (2,0) {$=\ 0$};
},\qquad
\tik{0.5}{%
\draw[thick] (0,-0.5) -- (0,-1);
\draw[thick] (0,0) circle (0.5cm);
\draw[thick] (0,0.5) -- (0,1);
\node at (2,0) {$=\ -2$};
\draw[thick] (3.5,1) -- (3.5,-1);
}
\]
Also, from $\wh \gamma_2$ and the $3$-tangle obtained `rotating it' we obtain:
\begin{equation}\label{eq:assoc1}
\tik{0.5}{%
\path (1,0) coordinate (a1);
\path (3,0) coordinate (a2);
\path (1,-2) coordinate (b1);
\path (3,-2) coordinate (b2);
\path (2,-0.6) coordinate (m1);
\path (2,-1.4) coordinate (m2);
\draw[thick] (a1) -- (m1) -- (a2);
\draw[thick] (b1) -- (m2) -- (b2);
\draw[thick] (m1) -- (m2);
\draw[very thin] (0,0) -- (4,0);
\draw[very thin] (0,-2) -- (4,-2);
}
-
\tik{0.5}{%
\path (1,0) coordinate (a1);
\path (3,0) coordinate (a2);
\path (1,-2) coordinate (b1);
\path (3,-2) coordinate (b2);
\path (1.6,-1) coordinate (m1);
\path (2.4,-1) coordinate (m2);
\draw[thick] (a1) -- (m1) -- (b1);
\draw[thick] (a2) -- (m2) -- (b2);
\draw[thick] (m1) -- (m2);
\draw[very thin] (0,0) -- (4,0);
\draw[very thin] (0,-2) -- (4,-2);
}
\ =\ 
\tik{0.5}{%
\path (1,0) coordinate (a1);
\path (3,0) coordinate (a2);
\path (1,-2) coordinate (b1);
\path (3,-2) coordinate (b2);
\draw[thick] (b1)to [bend left=60] (b2);
\draw[thick] (a1) to [bend right=60] (a2);
\draw[very thin] (0,0) -- (4,0);
\draw[very thin] (0,-2) -- (4,-2);
}
-
\tik{0.5}{%
\path (1,0) coordinate (a1);
\path (3,0) coordinate (a2);
\path (1,-2) coordinate (b1);
\path (3,-2) coordinate (b2);
\draw[thick] (a1)to [bend left=60] (b1);
\draw[thick] (a2) to [bend right=60] (b2);
\draw[very thin] (0,0) -- (4,0);
\draw[very thin] (0,-2) -- (4,-2);
}
\end{equation}
This allows us to replace cycles by linear combinations of $3$-tangles without cycles. For instance:
\[
\begin{split}
\tik{0.5}{%
\path (0,0) coordinate (origin);
\path (150:0.5cm) coordinate (t1);
\path (30:0.5cm) coordinate (t2);
\path (-90: 0.5cm) coordinate (t3);
\path (150:1cm) coordinate (e1);
\path (30:1cm) coordinate (e2);
\path (-90:1cm) coordinate (e3);
\draw[thick] (t1) -- (t2) -- (t3) -- cycle;
\draw[thick] (e1) -- (t1) (e2) -- (t2) (e3) -- (t3);
\draw[thin,dotted] (0,0.2) ellipse (0.6cm and 0.4cm);
}
&=
\tik{0.5}{%
\path (0,0) coordinate (origin);
\path (150:0.5cm) coordinate (t1);
\path (30:0.5cm) coordinate (t2);
\path (-90: 0.5cm) coordinate (t3);
\path (150:1cm) coordinate (e1);
\path (30:1cm) coordinate (e2);
\path (-90:1cm) coordinate (e3);
\path (0,0.2) coordinate (m);
\draw[thick] (e1) -- (m) -- (e2);
\draw[thick] (m) --++(0,-0.3);
\draw[thick] (e3)-- ++(0,0.3);
\draw[thick] (0,-0.4) circle (0.3cm);
}
+
\tik{0.5}{%
\path (0,0) coordinate (origin);
\path (150:0.5cm) coordinate (t1);
\path (30:0.5cm) coordinate (t2);
\path (-90: 0.5cm) coordinate (t3);
\path (150:1cm) coordinate (e1);
\path (30:1cm) coordinate (e2);
\path (-90:1cm) coordinate (e3);
\path (0,0.2) coordinate (m);
\draw[thick] (e1) -- (origin) -- (e2);
\draw[thick] (origin)-- (e3);
}
-
\tik{0.5}{%
\path (0,0) coordinate (origin);
\path (150:0.5cm) coordinate (t1);
\path (30:0.5cm) coordinate (t2);
\path (-90: 0.5cm) coordinate (t3);
\path (150:1cm) coordinate (e1);
\path (30:1cm) coordinate (e2);
\path (-90:1cm) coordinate (e3);
\path (0,0.2) coordinate (m);
\draw[thick] (e1) to [bend right=40] (e2);
\draw[thick] (e3)-- ++(0,0.3);
\draw[thick] (0,-0.4) circle (0.3cm);
}
\ =\  - 
\tik{0.5}{%
\path (0,0) coordinate (origin);
\path (150:0.5cm) coordinate (t1);
\path (30:0.5cm) coordinate (t2);
\path (-90: 0.5cm) coordinate (t3);
\path (150:1cm) coordinate (e1);
\path (30:1cm) coordinate (e2);
\path (-90:1cm) coordinate (e3);
\path (0,0.2) coordinate (m);
\draw[thick] (e1) -- (origin) -- (e2);
\draw[thick] (origin) -- (e3);
}
\\[10pt]
\tik{0.5}{%
\path (0,0) coordinate (origin);
\path (135:0.5cm) coordinate (c1);
\path (45:0.5cm) coordinate (c2);
\path (-45: 0.5cm) coordinate (c3);
\path (-135: 0.5cm) coordinate (c4);
\path (135:1cm) coordinate (e1);
\path (45:1cm) coordinate (e2);
\path (-45:1cm) coordinate (e3);
\path (-135:1cm) coordinate (e4);
\draw[thick] (c1) -- (c2) -- (c3) -- (c4) -- cycle;
\draw[thick] (c1) -- (e1) (c2) -- (e2) (c3) -- (e3) (c4) -- (e4);
\draw[thin,dotted] (-0.3,0) ellipse (0.4cm and 0.6cm);
}
&=
\tik{0.5}{%
\path (0,0) coordinate (origin);
\path (135:0.5cm) coordinate (c1);
\path (45:0.5cm) coordinate (c2);
\path (-45: 0.5cm) coordinate (c3);
\path (-135: 0.5cm) coordinate (c4);
\path (135:1cm) coordinate (e1);
\path (45:1cm) coordinate (e2);
\path (-45:1cm) coordinate (e3);
\path (-135:1cm) coordinate (e4);
\path (-0.4,0) coordinate (m);
\path (origin) coordinate (t1);
\draw[thick] (t1) -- (c2) -- (c3) -- cycle;
\draw[thick] (e1) -- (m) -- (e4);
\draw[thick] (m) -- (t1) (e2) -- (c2) (e3) -- (c3);
}
+
\tik{0.5}{%
\path (0,0) coordinate (origin);
\path (135:0.5cm) coordinate (c1);
\path (45:0.5cm) coordinate (c2);
\path (-45: 0.5cm) coordinate (c3);
\path (-135: 0.5cm) coordinate (c4);
\path (135:1cm) coordinate (e1);
\path (45:1cm) coordinate (e2);
\path (-45:1cm) coordinate (e3);
\path (-135:1cm) coordinate (e4);
\draw[thick] (e1) .. controls (c1) and (c1) .. (c2);
\draw[cross] (e4) .. controls (c4) and (c4) .. (c3);
\draw[thick] (e2) -- (c2) -- (c3) -- (e3);
}
-
\tik{0.5}{%
\path (0,0) coordinate (origin);
\path (135:0.5cm) coordinate (c1);
\path (45:0.5cm) coordinate (c2);
\path (-45: 0.5cm) coordinate (c3);
\path (-135: 0.5cm) coordinate (c4);
\path (135:1cm) coordinate (e1);
\path (45:1cm) coordinate (e2);
\path (-45:1cm) coordinate (e3);
\path (-135:1cm) coordinate (e4);
\path (0,0.4) coordinate (m);
\path (0,0.2) coordinate (t1);
\path (0.3,-0.4) coordinate (t2);
\path (-0.3,-0.4) coordinate (t3);
\draw[thick] (e4) to [bend right=40] (e1);
\draw[thick] (c2) .. controls (-0.2,0.5) and (-0.2,-0.5) .. (c3);
\draw[thick] (e2) -- (c2) -- (c3) -- (e3);
}
\ =\ 
\tik{0.5}{%
\path (0,0) coordinate (origin);
\path (135:0.5cm) coordinate (c1);
\path (45:0.5cm) coordinate (c2);
\path (-45: 0.5cm) coordinate (c3);
\path (-135: 0.5cm) coordinate (c4);
\path (135:1cm) coordinate (e1);
\path (45:1cm) coordinate (e2);
\path (-45:1cm) coordinate (e3);
\path (-135:1cm) coordinate (e4);
\path (-0.4,0) coordinate (m1);
\path (0.4,0) coordinate (m2);
\draw[thick] (e1) -- (m1) -- (e4);
\draw[thick] (e2) -- (m2) -- (e3);
\draw[thick] (m1) -- (m2);
} 
+
\tik{0.5}{%
\path (0,0) coordinate (origin);
\path (135:0.5cm) coordinate (c1);
\path (45:0.5cm) coordinate (c2);
\path (-45: 0.5cm) coordinate (c3);
\path (-135: 0.5cm) coordinate (c4);
\path (135:1cm) coordinate (e1);
\path (45:1cm) coordinate (e2);
\path (-45:1cm) coordinate (e3);
\path (-135:1cm) coordinate (e4);
\path (0,0.4) coordinate (m1);
\path (0,-0.4) coordinate (m2);
\draw[thick] (e1) -- (m1) -- (e2);
\draw[thick] (e3) -- (m2) -- (e4);
\draw[thick] (m1) -- (m2);
} 
+2
\tik{0.5}{%
\path (0,0) coordinate (origin);
\path (135:0.5cm) coordinate (c1);
\path (45:0.5cm) coordinate (c2);
\path (-45: 0.5cm) coordinate (c3);
\path (-135: 0.5cm) coordinate (c4);
\path (135:1cm) coordinate (e1);
\path (45:1cm) coordinate (e2);
\path (-45:1cm) coordinate (e3);
\path (-135:1cm) coordinate (e4);
\path (-0.4,0) coordinate (m1);
\path (0.4,0) coordinate (m2);
\draw[thick] (e4) .. controls (c4) and (c1) .. (e1);
\draw[thick] (e2) .. controls (c2) and (c3) .. (e3);
} 
\\
&=
2
\tik{0.5}{%
\path (0,0) coordinate (origin);
\path (135:0.5cm) coordinate (c1);
\path (45:0.5cm) coordinate (c2);
\path (-45: 0.5cm) coordinate (c3);
\path (-135: 0.5cm) coordinate (c4);
\path (135:1cm) coordinate (e1);
\path (45:1cm) coordinate (e2);
\path (-45:1cm) coordinate (e3);
\path (-135:1cm) coordinate (e4);
\path (0,0.4) coordinate (m1);
\path (0,-0.4) coordinate (m2);
\draw[thick] (e1) -- (m1) -- (e2);
\draw[thick] (e3) -- (m2) -- (e4);
\draw[thick] (m1) -- (m2);
} 
+3
\tik{0.5}{%
\path (0,0) coordinate (origin);
\path (135:0.5cm) coordinate (c1);
\path (45:0.5cm) coordinate (c2);
\path (-45: 0.5cm) coordinate (c3);
\path (-135: 0.5cm) coordinate (c4);
\path (135:1cm) coordinate (e1);
\path (45:1cm) coordinate (e2);
\path (-45:1cm) coordinate (e3);
\path (-135:1cm) coordinate (e4);
\path (-0.4,0) coordinate (m1);
\path (0.4,0) coordinate (m2);
\draw[thick] (e4) .. controls (c4) and (c1) .. (e1);
\draw[thick] (e2) .. controls (c2) and (c3) .. (e3);
} 
-
\tik{0.5}{%
\path (0,0) coordinate (origin);
\path (135:0.5cm) coordinate (c1);
\path (45:0.5cm) coordinate (c2);
\path (-45: 0.5cm) coordinate (c3);
\path (-135: 0.5cm) coordinate (c4);
\path (135:1cm) coordinate (e1);
\path (45:1cm) coordinate (e2);
\path (-45:1cm) coordinate (e3);
\path (-135:1cm) coordinate (e4);
\path (-0.4,0) coordinate (m1);
\path (0.4,0) coordinate (m2);
\draw[thick] (e1) .. controls (c1) and (c2) .. (e2);
\draw[thick] (e4) .. controls (c4) and (c3) .. (e3);
} 
\\[10pt]
\tik{0.5}{%
\path (0,0) coordinate (origin);
\path (90:0.5cm) coordinate (c1);
\path (18:0.5cm) coordinate (c2);
\path (-54:0.5cm) coordinate (c3);
\path (-126:0.5cm) coordinate (c4);
\path (162:0.5cm) coordinate (c5);
\path (90:1cm) coordinate (e1);
\path (18:1cm) coordinate (e2);
\path (-54:1cm) coordinate (e3);
\path (-126:1cm) coordinate (e4);
\path (162:1cm) coordinate (e5);
\draw[thick] (c1) -- (c2) -- (c3) -- (c4) -- (c5) -- cycle;
\draw[thick] (c1) -- (e1) (c2) -- (e2) (c3) -- (e3) (c4) -- (e4) (c5) -- (e5);
\draw[thin,dotted,rotate=36] (0,0.3) ellipse (0.6cm and 0.4cm);
}
&=
\tik{0.5}{%
\path (0,0) coordinate (origin);
\path (90:0.5cm) coordinate (c1);
\path (18:0.5cm) coordinate (c2);
\path (-54:0.5cm) coordinate (c3);
\path (-126:0.5cm) coordinate (c4);
\path (162:0.5cm) coordinate (c5);
\path (90:1cm) coordinate (e1);
\path (18:1cm) coordinate (e2);
\path (-54:1cm) coordinate (e3);
\path (-126:1cm) coordinate (e4);
\path (162:1cm) coordinate (e5);
\path (126:0.6cm) coordinate (m1);
\path (126:0.2cm) coordinate (m2);
\draw[thick] (c2) -- (c3) -- (c4);
\draw[thick] (c2) -- (e2) (c3) -- (e3) (c4) -- (e4);
\draw[thick] (c2) -- (m2) -- (c4);
\draw[thick] (m2) -- (m1);
\draw[thick] (e1) -- (m1) -- (e5);
}
+
\tik{0.5}{%
\path (0,0) coordinate (origin);
\path (90:0.5cm) coordinate (c1);
\path (18:0.5cm) coordinate (c2);
\path (-54:0.5cm) coordinate (c3);
\path (-126:0.5cm) coordinate (c4);
\path (162:0.5cm) coordinate (c5);
\path (90:1cm) coordinate (e1);
\path (18:1cm) coordinate (e2);
\path (-54:1cm) coordinate (e3);
\path (-126:1cm) coordinate (e4);
\path (162:1cm) coordinate (e5);
\draw[thick] (c2) .. controls (c1) and (c1) .. (e1);
\draw[thick] (c4) .. controls (c5) and (c5) .. (e5);
\draw[thick]  (c2) -- (c3) -- (c4) ;
\draw[thick]  (c2) -- (e2) (c3) -- (e3) (c4) -- (e4)  ;
}
-
\tik{0.5}{%
\path (0,0) coordinate (origin);
\path (90:0.5cm) coordinate (c1);
\path (18:0.5cm) coordinate (c2);
\path (-54:0.5cm) coordinate (c3);
\path (-126:0.5cm) coordinate (c4);
\path (162:0.5cm) coordinate (c5);
\path (90:1cm) coordinate (e1);
\path (18:1cm) coordinate (e2);
\path (-54:1cm) coordinate (e3);
\path (-126:1cm) coordinate (e4);
\path (162:1cm) coordinate (e5);
\draw[thick] (e1) .. controls (c1) and (c5) .. (e5);
\draw[thick] (c2) -- (c3) -- (c4);
\draw[thick] (c2) -- (e2) (c3) -- (e3) (c4) -- (e4) ;
\draw[thick] (c2) -- (c4);
}
\\
&= 
2
\tik{0.5}{%
\path (0,0) coordinate (origin);
\path (90:0.5cm) coordinate (c1);
\path (18:0.5cm) coordinate (c2);
\path (-54:0.5cm) coordinate (c3);
\path (-126:0.5cm) coordinate (c4);
\path (162:0.5cm) coordinate (c5);
\path (90:1cm) coordinate (e1);
\path (18:1cm) coordinate (e2);
\path (-54:1cm) coordinate (e3);
\path (-126:1cm) coordinate (e4);
\path (162:1cm) coordinate (e5);
\draw[thick] (e5) .. controls (c5) and (c5) .. (c1);
\draw[thick] (e4) .. controls (c4) and (c4) .. (c3);
\draw[thick] (c1) -- (c2) -- (c3) ;
\draw[thick] (c1) -- (e1) (c2) -- (e2) (c3) -- (e3) ;
}
+
\tik{0.5}{%
\path (0,0) coordinate (origin);
\path (90:0.5cm) coordinate (c1);
\path (18:0.5cm) coordinate (c2);
\path (-54:0.5cm) coordinate (c3);
\path (-126:0.5cm) coordinate (c4);
\path (162:0.5cm) coordinate (c5);
\path (90:1cm) coordinate (e1);
\path (18:1cm) coordinate (e2);
\path (-54:1cm) coordinate (e3);
\path (-126:1cm) coordinate (e4);
\path (162:1cm) coordinate (e5);
\draw[thick] (c2) .. controls (c1) and (c1) .. (e1);
\draw[thick] (c4) .. controls (c5) and (c5) .. (e5);
\draw[thick]  (c2) -- (c3) -- (c4) ;
\draw[thick]  (c2) -- (e2) (c3) -- (e3) (c4) -- (e4) (c5) ;
}
+3
\tik{0.5}{%
\path (0,0) coordinate (origin);
\path (90:0.5cm) coordinate (c1);
\path (18:0.5cm) coordinate (c2);
\path (-54:0.5cm) coordinate (c3);
\path (-126:0.5cm) coordinate (c4);
\path (162:0.5cm) coordinate (c5);
\path (90:1cm) coordinate (e1);
\path (18:1cm) coordinate (e2);
\path (-54:1cm) coordinate (e3);
\path (-126:1cm) coordinate (e4);
\path (162:1cm) coordinate (e5);
\draw[thick] (e4) .. controls (c4) and (c4) .. (c5);
\draw[thick] (e1) .. controls (c1) and (c1) .. (c5);
\draw[thick]  (c2) -- (c3) ;
\draw[thick]  (c5) -- (e5) (c2) -- (e2) (c3) -- (e3)  ;
}
-
\tik{0.5}{%
\path (0,0) coordinate (origin);
\path (90:0.5cm) coordinate (c1);
\path (18:0.5cm) coordinate (c2);
\path (-54:0.5cm) coordinate (c3);
\path (-126:0.5cm) coordinate (c4);
\path (162:0.5cm) coordinate (c5);
\path (90:1cm) coordinate (e1);
\path (18:1cm) coordinate (e2);
\path (-54:1cm) coordinate (e3);
\path (-126:1cm) coordinate (e4);
\path (162:1cm) coordinate (e5);
\draw[thick] (e5) .. controls (c5) and (c5) .. (c1);
\draw[thick] (e2) .. controls (c2) and (c2) .. (c1);
\draw[thick]  (c3) -- (c4) ;
\draw[thick]  (c1) -- (e1) (c3) -- (e3) (c4) -- (e4)  ;
}
+
\tik{0.5}{%
\path (0,0) coordinate (origin);
\path (90:0.5cm) coordinate (c1);
\path (18:0.5cm) coordinate (c2);
\path (-54:0.5cm) coordinate (c3);
\path (-126:0.5cm) coordinate (c4);
\path (162:0.5cm) coordinate (c5);
\path (90:1cm) coordinate (e1);
\path (18:1cm) coordinate (e2);
\path (-54:1cm) coordinate (e3);
\path (-126:1cm) coordinate (e4);
\path (162:1cm) coordinate (e5);
\draw[thick] (e2) .. controls (c2) and (c2) .. (c3);
\draw[thick] (e4) .. controls (c4) and (c4) .. (c3);
\draw[thick]  (c5) -- (c1) ;
\draw[thick]  (c3) -- (e3) (c5) -- (e5) (c1) -- (e1)  ;
}
\\[10pt]
\tik{0.5}{%
\path (0,0) coordinate (origin);
\path (150:0.5cm) coordinate (c1);
\path (90:0.5cm) coordinate (c2);
\path (30:0.5cm) coordinate (c3);
\path (-30:0.5cm) coordinate (c4);
\path (-90:0.5cm) coordinate (c5);
\path (-150:0.5cm) coordinate (c6);
\path (150:1cm) coordinate (e1);
\path (90:1cm) coordinate (e2);
\path (30:1cm) coordinate (e3);
\path (-30:1cm) coordinate (e4);
\path (-90:1cm) coordinate (e5);
\path (-150:1cm) coordinate (e6);
\draw[thick] (c1) -- (c2) -- (c3) -- (c4) -- (c5) -- (c6) -- cycle;
\draw[thick] (c1) -- (e1) (c2) -- (e2) (c3) -- (e3) (c4) -- (e4) (c5) -- (e5) (c6) -- (e6);
\draw[thin,dotted] (-0.3,0) ellipse (0.4cm and 0.6cm);
}
&=
\tik{0.5}{%
\path (0,0) coordinate (origin);
\path (150:0.5cm) coordinate (c1);
\path (90:0.5cm) coordinate (c2);
\path (30:0.5cm) coordinate (c3);
\path (-30:0.5cm) coordinate (c4);
\path (-90:0.5cm) coordinate (c5);
\path (-150:0.5cm) coordinate (c6);
\path (150:1cm) coordinate (e1);
\path (90:1cm) coordinate (e2);
\path (30:1cm) coordinate (e3);
\path (-30:1cm) coordinate (e4);
\path (-90:1cm) coordinate (e5);
\path (-150:1cm) coordinate (e6);
\path (-180:0.7cm) coordinate (m1);
\path (-180:0.3cm) coordinate (m2);
\draw[thick] (m2) -- (c2) -- (c3) -- (c4) -- (c5) -- cycle;
\draw[thick] (m1) -- (m2) (c2) -- (e2) (c3) -- (e3) (c4) -- (e4) (c5) -- (e5);
\draw[thick] (e1) -- (m1) -- (e6);
}
+
\tik{0.5}{%
\path (0,0) coordinate (origin);
\path (150:0.5cm) coordinate (c1);
\path (90:0.5cm) coordinate (c2);
\path (30:0.5cm) coordinate (c3);
\path (-30:0.5cm) coordinate (c4);
\path (-90:0.5cm) coordinate (c5);
\path (-150:0.5cm) coordinate (c6);
\path (150:1cm) coordinate (e1);
\path (90:1cm) coordinate (e2);
\path (30:1cm) coordinate (e3);
\path (-30:1cm) coordinate (e4);
\path (-90:1cm) coordinate (e5);
\path (-150:1cm) coordinate (e6);
\path (-180:0.7cm) coordinate (m1);
\path (-180:0.3cm) coordinate (m2);
\draw[thick] (e2) -- (c2) -- (c3) -- (c4) -- (c5) -- (e5);
\draw[thick] (e3) -- (c3) (e4) -- (c4);
\draw[thick] (c2) .. controls (c1) and (c1) .. (e1);
\draw[thick] (c5) .. controls (c6) and (c6) .. (e6);
}
-
\tik{0.5}{%
\path (0,0) coordinate (origin);
\path (150:0.5cm) coordinate (c1);
\path (90:0.5cm) coordinate (c2);
\path (30:0.5cm) coordinate (c3);
\path (-30:0.5cm) coordinate (c4);
\path (-90:0.5cm) coordinate (c5);
\path (-150:0.5cm) coordinate (c6);
\path (150:1cm) coordinate (e1);
\path (90:1cm) coordinate (e2);
\path (30:1cm) coordinate (e3);
\path (-30:1cm) coordinate (e4);
\path (-90:1cm) coordinate (e5);
\path (-150:1cm) coordinate (e6);
\path (-180:0.7cm) coordinate (m1);
\path (-180:0.3cm) coordinate (m2);
\draw[thick] (e2) -- (c2) -- (c3) -- (c4) -- (c5) -- (e5);
\draw[thick] (e3) -- (c3) (e4) -- (c4);
\draw[thick] (e1) to [bend left=60] (e6);
\draw[thick] (c5) to [bend left=60] (c2);
}
\ =\ \cdots
\end{split}
\]

Therefore, any $3$-tangle is equivalent in $\cT_{\wh\Gamma}$ to a linear combination of $3$-tangles without crossings, circles and cycles (i.e., a linear combination of planar trees). Let us call these \emph{tree $3$-tangles}.  
 
Moreover, equation \eqref{eq:assoc1} can be transformed easily to
\begin{equation}\label{eq:assoc2}
\tik{0.5}{%
\path (1,0) coordinate (a1);
\path (2,0) coordinate (a2);
\path (3,0) coordinate (a3);
\path (2,-2) coordinate (b1);
\path (1.5,-0.5) coordinate (m1);
\path (2,-1.2) coordinate (m2);
\draw[thick] (a1) -- (m1) (a2) -- (m1) (a3)-- (m2) (m1) -- (m2) (m2) -- (b1);
}
\ -\ 
\tik{0.5}{%
\path (1,0) coordinate (a1);
\path (2,0) coordinate (a2);
\path (3,0) coordinate (a3);
\path (2,-2) coordinate (b1);
\path (2.5,-0.5) coordinate (m1);
\path (2,-1.2) coordinate (m2);
\draw[thick] (a1) -- (m2) (a2) -- (m1) (a3)-- (m1) (m1) -- (m2) (m2) -- (b1);
}
\ =\ 
\tik{0.5}{%
\path (1,0) coordinate (a1);
\path (2,0) coordinate (a2);
\path (3,0) coordinate (a3);
\path (2,-2) coordinate (b1);
\draw[thick] (a3) -- (b1);
\draw[thick] (a1) to [bend right=60] (a2);
}
\ -\ 
\tik{0.5}{%
\path (1,0) coordinate (a1);
\path (2,0) coordinate (a2);
\path (3,0) coordinate (a3);
\path (2,-2) coordinate (b1);
\draw[thick] (a1) -- (b1);
\draw[thick] (a2) to [bend right=60] (a3);
}
\end{equation}
that is, 
\[
\tik{0.5}{%
\path (1,0) coordinate (a1);
\path (2,0) coordinate (a2);
\path (3,0) coordinate (a3);
\path (2,-2) coordinate (b1);
\path (1.5,-0.5) coordinate (m1);
\path (2,-1.2) coordinate (m2);
\draw[thick] (a1) -- (m1) (a2) -- (m1) (a3)-- (m2) (m1) -- (m2) (m2) -- (b1);
}
\ -\ 
\tik{0.5}{%
\path (1,0) coordinate (a1);
\path (2,0) coordinate (a2);
\path (3,0) coordinate (a3);
\path (2,-2) coordinate (b1);
\path (2.5,-0.5) coordinate (m1);
\path (2,-1.2) coordinate (m2);
\draw[thick] (a1) -- (m2) (a2) -- (m1) (a3)-- (m1) (m1) -- (m2) (m2) -- (b1);
}
\ =\ 
\begin{matrix}
\text{\small linear combination of tree $3$-tangles}\\[-2pt]
\text{\small with a lower number of trivalent nodes.}
\end{matrix}
\]
and this gives a variation of associativity. For instance:
\[
\tik{0.5}{%
\path (1,0) coordinate (a1);
\path (2,0) coordinate (a2);
\path (3,0) coordinate (a3);
\path (4,0) coordinate (a4);
\path (5,0) coordinate (a5);
\path (3,-3) coordinate (b1);
\path (1.5,-0.8) coordinate (m1);
\path (3.5,-0.5) coordinate (m2);
\path (4,-1.3) coordinate (m3);
\path (3,-2) coordinate (m4);
\draw[thick] (a1) -- (m1) (a2) -- (m1) (a3)-- (m2) (a4) -- (m2);
\draw[thick] (m2) -- (m3) (a5) -- (m3) (m1) -- (m4)  (m3) -- (m4) (m4) -- (b1);
}
-
\tik{0.5}{%
\path (1,0) coordinate (a1);
\path (2,0) coordinate (a2);
\path (3,0) coordinate (a3);
\path (4,0) coordinate (a4);
\path (5,0) coordinate (a5);
\path (3,-3) coordinate (b1);
\path (1.5,-0.5) coordinate (m1);
\path (2,-1) coordinate (m2);
\path (2.5,-1.5) coordinate (m3);
\path (3,-2) coordinate (m4);
\draw[thick] (a1) -- (m1) (a2) -- (m1) (m1) -- (m2) (a3)-- (m2) (m2) -- (m3);
\draw[thick] (a4) -- (m3) (m3) -- (m4) (a5) -- (m4)  (m4) -- (b1);
}
\ =\ 
\begin{matrix}
\text{\small linear combination of tree $3$-tangles}\\[-2pt]
\text{\small with a lower number of trivalent nodes.}
\end{matrix}
\]

For any $n,m\in\NN$, $n+m\geq 2$, the $3$-tangle $[n]\rightarrow [m]$ given by:
\begin{equation} \begin{split} 
\begin{aligned}
\tik{0.5}{%
\node[punto] (a1) at (1,0)  [label=above: {\tiny $1$}] {};
\node[punto] (a2) at (2,0)  [label=above: {\tiny $2$}] {};
\node[punto] (a3) at (3,0)  [label=above: {\tiny $3$}] {};
\node[punto] (a4) at (5,0)  [label=above: {\tiny $n$}] {};
\node at (4,0.3) {$\cdots$};
\node[punto] (b1) at (1.5,-2)  [label=below: {\tiny $1'$}] {};
\node[punto] (b2) at (2.5,-2)  [label=below: {\tiny $2'$}] {};
\node[punto] (b3) at (4.5,-2)  [label=below: {\tiny $m'$}] {};
\node at (3.5,-2.3) {$\cdots$};
\path (1.25,-0.2) coordinate (m1);
\path (1.5,-0.4) coordinate (m2);
\path (2,-0.8) coordinate (m3);
\path (2.2,-1.3) coordinate (n1);
\path (1.8,-1.7) coordinate (n2);
\draw[thick] (a1) -- (m1) -- (m2) -- (m3) -- (n1) -- (n2) -- (b1);
\draw[thick] (a2) -- (m1) (a3) -- (m2) (a4) -- (m3) (b2) -- (n2) (b3) -- (n1);
\draw[very thin] (0,0) -- (6,0);
\draw[very thin] (0.5,-2) -- (5.5,-2);
\node at (3,-0.3) {$\cdots$};
\node at (3,-1.9) {$\cdots$};
}
& 
\quad \text{if $n\ne 0\ne m$,}
\\[10pt]
\tik{0.5}{%
\node[punto] (a1) at (1,0)  [label=above: {\tiny $1$}] {};
\node[punto] (a2) at (2,0)  [label=above: {\tiny $2$}] {};
\node[punto] (a3) at (3,0)  [label=above: {\tiny $3$}] {};
\node[punto] (a4) at (5,0)  [label=above: {\tiny $n$}] {};
\node at (4,0.3) {$\cdots$};
\node at (4,-0.3) {$\cdots$};
\draw[thick] (a1) -- (2,-1) -- (3,-1.2) -- (4,-1) -- (a4);
\draw[thick] (a2) -- (2,-1) (a3) -- (3,-1.2);
\draw[very thin] (0,0) -- (6,0);
}
&
\quad \text{if $m=0$,}
\\[10pt]
\tik{0.5}{%
\node[punto] (a1) at (1,0)  [label=below: {\tiny $1'$}] {};
\node[punto] (a2) at (2,0)  [label=below: {\tiny $2'$}] {};
\node[punto] (a3) at (3,0)  [label=below: {\tiny $3'$}] {};
\node[punto] (a4) at (5,0)  [label=below: {\tiny $m'$}] {};
\node at (4,0.3) {$\cdots$};
\node at (4,-0.3) {$\cdots$};
\draw[thick] (a1) -- (2,1) -- (3,1.2) -- (4,1) -- (a4);
\draw[thick] (a2) -- (2,1) (a3) -- (3,1.2);
\draw[very thin] (0,0) -- (6,0);
}&\quad\text{if $n=0$,}
\end{aligned}
\end{split}
\end{equation} 
will be called the \emph{normalized connected $3$-tangle} in $\Mor_{\cT_{\wh\Gamma}}([n],[m])$. The associativity above shows that any $3$-tangle in $\Mor_{\cT_{\wh\Gamma}}([n],[m])$ is a linear combination of $3$-tangles which are a disjoint union of normalized connected $3$-tangles.  Disjoint unions of normalized connected $3$-tangles will be called \emph{normalized} $3$-tangles. They are determined by the $1$-valent vertices in each connected component. For instance, if $T:[6]\rightarrow [3]$ is a normalized $3$-tangle, $\partial T=\{1,2,3,4,5,6,1',2',3'\}$, and the connected components split $\partial T$ into $\{1,2,3,6,3'\}$, $\{4,5\}$, and $\{1',2'\}$, then we have:
\[
T=\ 
\tik{0.5}{%
\node[punto] (a1) at (1,0)  [label=above: {\tiny $1$}] {};
\node[punto] (a2) at (2,0)  [label=above: {\tiny $2$}] {};
\node[punto] (a3) at (3,0)  [label=above: {\tiny $3$}] {};
\node[punto] (a4) at (4,0)  [label=above: {\tiny $4$}] {};
\node[punto] (a5) at (5,0)  [label=above: {\tiny $5$}] {};
\node[punto] (a6) at (6,0)  [label=above: {\tiny $6$}] {};
\node[punto] (b1) at (2.5,-2)  [label=below: {\tiny $1'$}] {};
\node[punto] (b2) at (3.5,-2)  [label=below: {\tiny $2'$}] {};
\node[punto] (b3) at (4.5,-2)  [label=below: {\tiny $3'$}] {};
\draw[thick] (a1) -- (1.8,-0.3) -- (2.6,-0.6) -- (4.2,-1.2) -- (b3);
\draw[thick] (a2) -- (1.8,-0.3) (a3) -- (2.6,-0.6) (a6) -- (4.2,-1.2);
\draw[thick] (b1) to [bend left=60] (b2);
\draw[thick] (a4) to [bend right=60] (a5);
\draw[very thin] (0,0) -- (7,0) (1.5,-2) -- (5.5,-2);
}
\]

The number of normalized $3$-tangles $[n]\rightarrow [m]$ is the number $a(n+m)$ of \emph{Catalan partitions} of $n+m$, i.e., partitions of $n+m$ points around a circle into a disjoint union of subsets whose convex hulls are disjoint and such that all subsets have at least two elements.   The sequence $a(n)$ is defined recursively by 
\begin{equation}\label{eq:Catalan}
a(0)=1,\ a(1)=0,\quad a(n)=\frac{(n-1)\bigl(2a(n-1)+3a(n-2)\bigr)}{n+1},
\end{equation} 
and the terms $a(2n)$ are the sequence $\#$A099251  in \cite{OEIS}.  
The Catalan partition corresponding to the normalized $3$-tangle above is pictured as follows:
\[
\tik{0.8}{%
\draw[thin, dotted] (0,0) circle (1.5cm);
\path (150:1.5cm) coordinate (a1);
\path (126:1.5cm) coordinate (a2);
\path (102:1.5cm) coordinate (a3);
\path (78:1.5cm) coordinate (a4);
\path (54:1.5cm) coordinate (a5);
\path (30:1.5cm) coordinate (a6);
\node at (150:1.7cm) {\tiny $1$};
\node at (126:1.7cm) {\tiny $2$};
\node at (102:1.7cm) {\tiny $3$};
\node at (78:1.7cm) {\tiny $4$};
\node at (54:1.7cm) {\tiny $5$};
\node at (30:1.7cm) {\tiny $6$};
\path (-145:1.5cm) coordinate (b1);
\path (-90:1.5cm) coordinate (b2);
\path (-45:1.5cm) coordinate (b3);
\node at (-145:1.8cm) {\tiny $1'$};
\node at (-90:1.8cm) {\tiny $2'$};
\node at (-45:1.8cm) {\tiny $3'$};
\fill[color=gray] (a1) -- (a2) -- (a3) -- (a6) -- (b3) -- cycle;
\draw[thick] (a1) -- (a2) -- (a3) -- (a6) -- (b3) -- cycle;
\draw[thick] (a4) -- (a5) (b1) -- (b2);
}
\]

\begin{theorem}\label{th:sl2}
Let $n,m\in\NN$,  and assume that the characteristic of $\FF$ is $0$.   Let $\frV=(\VV,\mathsf{b},\times)$ be a $3$-dimensional vector space $\VV$ endowed
with a nonzero cross product $x\times y$ relative to the nondegenerate symmetric bilinear form $\mathsf{b}$, and let $\wh\Gamma = 
\{\wh \gamma_0, \wh \gamma_1, \wh \gamma_2\}$, where $\wh\gamma_0$ is as in \eqref{eq:whgamma0}, $\wh\gamma_1$ as
in \eqref{eq:whgamma1}, and $\wh\gamma_2$ as in \eqref{eq:whgamma2}.   
\begin{itemize}
\item[{\rm (a)}]  The classes modulo $\wh\Gamma$ of normalized $3$-tangles $[n]\rightarrow [m]$ form a basis of $\Mor_{\cT_{\wh\Gamma}}([n],[m])$.

\item[{\rm (b)}]     $\cR_{\wh\Gamma}$  gives a linear isomorphism 
\[
\Mor_{\cT_{\wh\Gamma}}([n],[m])\rightarrow \Hom_{\SO(\VV,\mathsf{b})}(\VV^{\otimes n},\VV^{\otimes m}).  
\]
\item[{\rm (c)}]  The normalized $3$-tangles $[n]\rightarrow [n]$ give a basis of the centralizer algebra $$\End_{\SO(\VV,\mathsf{b})}\bigl(\VV^{\otimes n}\bigr)\simeq \Mor_{\cT_{\wh\Gamma}}([n],[n]),$$ and $\dimm  \End_{\SO(\VV,\mathsf{b})}\bigl(\VV^{\otimes n}\bigr)$ equals the number $a(2n)$ of Catalan partitions. 
%Multiplication in this basis corresponds to composing by bordism and applying Steps 1--6 of \eqref{eq:steps}.

\end{itemize}
\end{theorem}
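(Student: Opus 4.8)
The plan is to run the argument of Theorem~\ref{th:G2bis} in the three-dimensional case. Two of the three ingredients are already in place: the discussion preceding the statement (removal of circles, cycles and crossings, followed by the reassociation \eqref{eq:assoc2}) shows that the normalized $3$-tangles $[n]\to[m]$ span $\Mor_{\cT_{\wh\Gamma}}([n],[m])$, and the theorem immediately preceding this one shows that $\cR_{\wh\Gamma}$ maps $\Mor_{\cT_{\wh\Gamma}}([n],[m])$ \emph{onto} $\Hom_{\SO(\VV,\mathsf{b})}(\VV^{\otimes n},\VV^{\otimes m})$. Hence it suffices to prove that these two spaces have the same dimension: a surjective linear map whose source is spanned by exactly that many vectors and whose target has that dimension must be an isomorphism carrying the spanning set to a basis, which gives (a) and (b). Part (c) is then the case $m=n$: (a) gives a basis of $\Mor_{\cT_{\wh\Gamma}}([n],[n])$, (b) identifies this space with $\End_{\SO(\VV,\mathsf{b})}(\VV^{\otimes n})$, and the common dimension is the number of normalized $3$-tangles $[n]\to[n]$, which will have been identified with $a(2n)$.

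I would first count normalized $3$-tangles. Such a tangle is a disjoint union of normalized connected ones, each determined by the set of at least two boundary points it joins; reading the boundary $\{1,\dots,n,1',\dots,m'\}$ cyclically, the resulting blocks are precisely the blocks of a Catalan partition of $n+m$ points, and conversely. So their number equals the number of Catalan partitions of $n+m$ points, and removing the block through a fixed boundary point (which splits the remaining points into arcs carrying independent Catalan partitions, the empty and singleton cases being handled by $a(0)=1$, $a(1)=0$) shows this number satisfies the recursion \eqref{eq:Catalan} (via the resulting algebraic generating-function equation), hence equals $a(n+m)$, the values $a(2n)$ being sequence A099251 in \cite{OEIS}. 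One may also note that the bijection $\Phi_{n,m}$ of \eqref{eq:Phinm_bis}, being a reorientation of the boundary circle, carries normalized $3$-tangles $[n]\to[m]$ bijectively to normalized $3$-tangles $[n+m]\to[0]$.

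Next I would compute $\dimm\Hom_{\SO(\VV,\mathsf{b})}(\VV^{\otimes n},\VV^{\otimes m})$. Because $\VV\cong\VV^{*}$ as $\SO(\VV,\mathsf{b})$-modules via $\mathsf{b}$ (concretely, by the isomorphism $\Phi_{n,m}$ of Proposition~\ref{pr:tPhiPsi}), this dimension equals $\dimm(\VV^{\otimes(n+m)})^{\SO(\VV,\mathsf{b})}$, and since $\charac\FF=0$ it is unchanged upon passing to $\overline{\FF}$, over which $\SO(\VV,\mathsf{b})$ is the split group $\SO_3$ with $\VV$ its adjoint (``spin $1$'') module. The Clebsch--Gordan rules then identify $\dimm(\VV^{\otimes N})^{\SO_3}$ with the number of closed walks of length $N$ at the vertex $0$ in the graph on $\{0,1,2,\dots\}$ having an edge between $j$ and $j+1$ for every $j\geq 0$ and a loop at each $j\geq 1$ (the Bratteli diagram for iterated tensoring by the adjoint module). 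A standard generating-function computation — a first-return decomposition expresses the series $\sum_N\dimm(\VV^{\otimes N})^{\SO_3}\,x^N$ as $\bigl(1-x^2M(x)\bigr)^{-1}$, with $M$ the Motzkin series — or, over $\CC$, Weyl integration giving $\tfrac1\pi\int_0^\pi(1+2\cos\phi)^N(1-\cos\phi)\,d\phi$, shows that this sequence starts $1,0$ and satisfies \eqref{eq:Catalan}, so it equals $a(N)$. Comparing with the previous paragraph finishes (a), (b) and (c), the last with $\dimm\End_{\SO(\VV,\mathsf{b})}(\VV^{\otimes n})=a(2n)$.

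The crux is this last equality: a priori the number of Catalan partitions of $N$ points and $\dimm(\VV^{\otimes N})^{\SO_3}$ are unrelated, and the clean bridge is to show each satisfies \eqref{eq:Catalan} (equivalently, each has generating series $\bigl(1-x^2M(x)\bigr)^{-1}$). For the invariant dimension the one subtlety in the walk count is the exceptional vertex $0$ — no loop, no downward step — which is exactly what makes these Riordan rather than Motzkin numbers; for the Catalan partitions it is the block-through-a-point decomposition. Everything else (the spanning property of normalized $3$-tangles, the surjectivity of $\cR_{\wh\Gamma}$, and the transport along $\Phi_{n,m}$ at the levels of tangles and of $\Hom$-spaces) is already available, so once the dimension count is established the theorem follows formally.
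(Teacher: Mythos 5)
Your proposal is correct and follows essentially the same route as the paper: the paper likewise reduces everything to showing that $\dimm\Hom_{\SO(\VV,\mathsf{b})}(\VV^{\otimes (n+m)},\FF)$ equals the number $a(n+m)$ of Catalan partitions (after extending scalars and identifying $(\VV,\mathsf{b})$ with the adjoint module of $\frsl_2$), and then combines the surjectivity of $\cR_{\wh\Gamma}$ with the spanning property of the normalized $3$-tangles exactly as in Theorem \ref{th:G2bis}. The only difference is that you supply the Clebsch--Gordan/walk-counting verification that this invariant dimension satisfies the recursion \eqref{eq:Catalan}, a fact the paper simply cites as known.
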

\begin{proof}
Extending scalars, we may assume that $\FF$ is algebraically closed.  
Then $(\VV,\mathsf{b})$ is isomorphic to the adjoint module $\frsl_2$ for the Lie algebra  $\frsl_2$ of
$2 \times 2$ matrices of trace 0 over $\FF$, because $\frsl_2$ is the orthogonal Lie algebra relative to its Killing form, which is
a nondegenerate symmetric bilinear form. 
Thus, $\Hom_{\SO(\VV,\mathsf{b})}(\VV^{\otimes n},\FF)\cong\Hom_{\frsl_2}(\frsl_2^{\otimes n},\FF)$, whose dimension also equals  the number $a(n)$ of Catalan partitions. The same argument as in Theorem \ref{th:G2} now gives the result.
\end{proof}

\begin{remark}
In particular, for $n=m$, Theorem \ref{th:sl2} also gives a basis of the centralizer algebra   $\End_{\frsl_2}(\frsl_2^{\otimes n})$ of
$\frsl_2$ over $\FF$ acting on tensor powers of its adjoint module.  Multiplication in this basis can be achieved as in Remark
\ref{rem:mult}.   
\end{remark}

\begin{remark}\label{re:det}
Relation $\wh\gamma_2$ implies that $\Mor_{\cT_{\wh\Gamma}}([n],[0])$ is spanned by $3$-tangles where crossings are allowed, and where each connected component has at most one trivalent node. If there are two connected components with one trivalent node, we can proceed as follows, using $\wh\gamma_2$ repeatedly:
\[
\begin{split}
&
\tik{0.3}{%
\path (1,0) coordinate (a1);
\path (2,0) coordinate (a2);
\path (3,0) coordinate (a3);
\path (4,0) coordinate (a4);
\path (5,0) coordinate (a5);
\path (6,0) coordinate (a6);
\draw[thick] (a1) .. controls (1,-0.5) and (1.5,-1) .. (2,-1);
\draw[thick] (a3) .. controls (3,-0.5) and (2.5,-1) .. (2,-1);
\draw[thick] (a2) -- (2,-1);
\draw[thick] (a4) .. controls (4,-0.5) and (4.5,-1) .. (5,-1);
\draw[thick] (a6) .. controls (6,-0.5) and (5.5,-1) .. (5,-1);
\draw[thick] (a5) -- (5,-1);
\draw[very thin] (0,0) -- (7,0);
}
\ = \ 
\tik{0.3}{%
\path (1,0) coordinate (a1);
\path (2,0) coordinate (a2);
\path (3,0) coordinate (a3);
\path (4,0) coordinate (a4);
\path (5,0) coordinate (a5);
\path (6,0) coordinate (a6);
\draw[thick] (a1) to (1,-3);
\draw[thick] (1,-3) .. controls (1,-3.5) and (1,-3.5) .. (1.8,-3.5);
\draw[thick] (2.5,-3) .. controls (2.5,-3.5) and (2.5,-3.5) .. (1.8,-3.5);
\draw[thick] (2.5,-3) -- (2.5,-0.7) (2.5,-0.7) -- (a2) (2.5,-0.7) -- (a3);
\draw[thick] (a4) to (4,-3);
\draw[thick] (4,-3) .. controls (4,-3.5) and (4,-3.5) .. (4.8,-3.5);
\draw[thick] (5.5,-3) .. controls (5.5,-3.5) and (5.5,-3.5) .. (4.8,-3.5);
\draw[thick] (5.5,-3) -- (5.5,-0.7) (5.5,-0.7) -- (a5) (5.5,-0.7) -- (a6);
\draw[very thin] (0,0) -- (7,0);
\draw[very thin,dotted] (0,-1.5) -- (7,-1.5);
\draw[very thin,dotted] (0,-3) -- (7,-3);
}
\ =\ 
\tik{0.3}{%
\path (1,0) coordinate (a1);
\path (2,0) coordinate (a2);
\path (3,0) coordinate (a3);
\path (4,0) coordinate (a4);
\path (5,0) coordinate (a5);
\path (6,0) coordinate (a6);
\draw[thick] (a1) -- (1,-3);
\draw[thick] (1,-3) .. controls (1,-3.5) and (1,-3.5) .. (1.8,-3.5);
\draw[thick] (2.5,-3) .. controls (2.5,-3.5) and (2.5,-3.5) .. (1.8,-3.5);
\draw[cross] (2.5,-0.7) -- (2.5,-1.5) -- (4,-3);
\draw[thick] (a2) -- (2.5,-0.7) -- (a3);
\draw[cross] (2.5,-3) -- (4,-1.5);
\draw[thick] (a4) -- (4,-1.5);
\draw[thick] (4,-3) .. controls (4,-3.5) and (4,-3.5) .. (4.8,-3.5);
\draw[thick] (5.5,-3) .. controls (5.5,-3.5) and (5.5,-3.5) .. (4.8,-3.5);
\draw[thick] (5.5,-3) -- (5.5,-0.7) (5.5,-0.7) -- (a5) (5.5,-0.7) -- (a6);
\draw[very thin] (0,0) -- (7,0);
}
\ -\ 
\tik{0.3}{%
\path (1,0) coordinate (a1);
\path (2,0) coordinate (a2);
\path (3,0) coordinate (a3);
\path (4,0) coordinate (a4);
\path (5,0) coordinate (a5);
\path (6,0) coordinate (a6);
\draw[thick] (a1) -- (1,-3);
\draw[thick] (1,-3) .. controls (1,-3.5) and (1,-3.5) .. (1.8,-3.5);
\draw[thick] (2.5,-3) .. controls (2.5,-3.5) and (2.5,-3.5) .. (1.8,-3.5);
\draw[thick] (a2) -- (2.5,-0.7) -- (a3);
\draw[thick] (2.5,-0.7) -- (2.5,-1.5) -- (3.3,-1.9) -- (3.3,-2.6) -- (2.5,-3);
\draw[thick] (a4) -- (4,-1.5) -- (3.3,-1.9);
\draw[thick] (3.3,-2.6) -- (4,-3);
\draw[thick] (5.5,-3) -- (5.5,-0.7) (5.5,-0.7) -- (a5) (5.5,-0.7) -- (a6);
\draw[thick] (4,-3) .. controls (4,-3.5) and (4,-3.5) .. (4.8,-3.5);
\draw[thick] (5.5,-3) .. controls (5.5,-3.5) and (5.5,-3.5) .. (4.8,-3.5);
\draw[very thin] (0,0) -- (7,0);
}
\\[10pt]
&\qquad =\
\tik{0.3}{%
\path (1,0) coordinate (a1);
\path (2,0) coordinate (a2);
\path (3,0) coordinate (a3);
\path (4,0) coordinate (a4);
\path (5,0) coordinate (a5);
\path (6,0) coordinate (a6);
\draw[thick] (a3) .. controls (3,-1) and (6,-1) .. (a6);
\draw[cross] (a2) .. controls (2,-1.8) and (5,-1.8) .. (a5);
\draw[cross] (a1) .. controls (1,-2) and (4,-2) .. (a4);
\draw[very thin] (0,0) -- (7,0);
}
- 
\tik{0.3}{%
\path (1,0) coordinate (a1);
\path (2,0) coordinate (a2);
\path (3,0) coordinate (a3);
\path (4,0) coordinate (a4);
\path (5,0) coordinate (a5);
\path (6,0) coordinate (a6);
\draw[thick] (a3) .. controls (3,-1) and (5,-1) .. (a5);
\draw[cross] (a2) .. controls (2,-1.8) and (6,-1.8) .. (a6);
\draw[cross] (a1) .. controls (1,-2) and (4,-2) .. (a4);
\draw[very thin] (0,0) -- (7,0);
}
- 
\tik{0.3}{%
\path (1,0) coordinate (a1);
\path (2,0) coordinate (a2);
\path (3,0) coordinate (a3);
\path (4,0) coordinate (a4);
\path (5,0) coordinate (a5);
\path (6,0) coordinate (a6);
\draw[thick] (a5) -- (5.5,-0.5) -- (a6);
\draw[thick] (a1) -- (2,-1) -- (a3);
\draw[thick] (2,-1) .. controls (2,-2) and (5.5,-2) .. (5.5,-0.5);
\draw[cross] (a2) .. controls (2,-1) and (4,-1) .. (a4);
\draw[very thin] (0,0) -- (7,0);
}
+
\tik{0.3}{%
\path (1,0) coordinate (a1);
\path (2,0) coordinate (a2);
\path (3,0) coordinate (a3);
\path (4,0) coordinate (a4);
\path (5,0) coordinate (a5);
\path (6,0) coordinate (a6);
\draw[thick] (a5) -- (5.5,-0.5) -- (a6);
\draw[thick] (a1) -- (1.5,-0.5) -- (a2);
\draw[thick] (a3) .. controls (3,-1) and (4,-1) .. (a4);
\draw[cross] (1.5,-0.5) .. controls (1.5,-2) and (5.5,-2) .. (5.5,-0.5);
\draw[very thin] (0,0) -- (7,0);
}
\\[10pt]
&\qquad =\ 
\tik{0.3}{%
\path (1,0) coordinate (a1);
\path (2,0) coordinate (a2);
\path (3,0) coordinate (a3);
\path (4,0) coordinate (a4);
\path (5,0) coordinate (a5);
\path (6,0) coordinate (a6);
\draw[thick] (a3) .. controls (3,-1) and (6,-1) .. (a6);
\draw[cross] (a2) .. controls (2,-1.8) and (5,-1.8) .. (a5);
\draw[cross] (a1) .. controls (1,-2) and (4,-2) .. (a4);
\draw[very thin] (0,0) -- (7,0);
}
-
\tik{0.3}{%
\path (1,0) coordinate (a1);
\path (2,0) coordinate (a2);
\path (3,0) coordinate (a3);
\path (4,0) coordinate (a4);
\path (5,0) coordinate (a5);
\path (6,0) coordinate (a6);
\draw[thick] (a3) .. controls (3,-1) and (5,-1) .. (a5);
\draw[cross] (a2) .. controls (2,-1.8) and (6,-1.8) .. (a6);
\draw[cross] (a1) .. controls (1,-2) and (4,-2) .. (a4);
\draw[very thin] (0,0) -- (7,0);
}
-
\tik{0.3}{%
\path (1,0) coordinate (a1);
\path (2,0) coordinate (a2);
\path (3,0) coordinate (a3);
\path (4,0) coordinate (a4);
\path (5,0) coordinate (a5);
\path (6,0) coordinate (a6);
\draw[thick] (a3) .. controls (3,-1) and (6,-1) .. (a6);
\draw[cross] (a2) .. controls (2,-1.2) and (4,-1.2) .. (a4);
\draw[cross] (a1) .. controls (1,-2) and (5,-2) .. (a5);
\draw[very thin] (0,0) -- (7,0);
}
\\
&\qquad\qquad +
\tik{0.3}{%
\path (1,0) coordinate (a1);
\path (2,0) coordinate (a2);
\path (3,0) coordinate (a3);
\path (4,0) coordinate (a4);
\path (5,0) coordinate (a5);
\path (6,0) coordinate (a6);
\draw[thick] (a3) .. controls (3,-1) and (5,-1) .. (a5);
\draw[cross] (a2) .. controls (2,-1.2) and (4,-1.2) .. (a4);
\draw[cross] (a1) .. controls (1,-2) and (6,-2) .. (a6);
\draw[very thin] (0,0) -- (7,0);
}
+
\tik{0.3}{%
\path (1,0) coordinate (a1);
\path (2,0) coordinate (a2);
\path (3,0) coordinate (a3);
\path (4,0) coordinate (a4);
\path (5,0) coordinate (a5);
\path (6,0) coordinate (a6);
\draw[thick] (a3) .. controls (3,-0.5) and (4,-0.5) .. (a4);
\draw[thick] (a2) .. controls (2,-1) and (6,-1) .. (a6);
\draw[cross] (a1) .. controls (1,-2) and (5,-2) .. (a5);
\draw[very thin] (0,0) -- (7,0);
}
-
\tik{0.3}{%
\path (1,0) coordinate (a1);
\path (2,0) coordinate (a2);
\path (3,0) coordinate (a3);
\path (4,0) coordinate (a4);
\path (5,0) coordinate (a5);
\path (6,0) coordinate (a6);
\draw[thick] (a3) .. controls (3,-0.5) and (4,-0.5) .. (a4);
\draw[thick] (a2) .. controls (2,-1) and (5,-1) .. (a5);
\draw[thick] (a1) .. controls (1,-2) and (6,-2) .. (a6);
\draw[very thin] (0,0) -- (7,0);
}
\end{split}
\]
This corresponds, under $\cR_{\cT_{\wh\Gamma}}$ to the identity
\[
\mathsf{b}( x_1\times x_2,x_3)\,\mathsf{b}( y_1\times y_2,y_3)=\det\begin{pmatrix} \mathsf{b}( x_i,y_j)\end{pmatrix}.
\]

Hence, $\Mor_{\cT_{\wh\Gamma}}([n],[0])$ is spanned by $3$-tangles, where there is at most one connected component with a trivalent node (and in this case $n$ must be odd). This reflects the fact that the algebra of invariants $\FF[n\VV]^{\SO(\VV,\mathsf{b})}$ for a $3$-dimensional cross product algebra $\frV =(\VV,\mathsf{b},\times)$  is generated by the polynomial maps $\mathsf{b}( x_i,x_j)$ and $\det(x_i,x_j,x_k)=\mathsf{b}( x_i\times x_j,x_k)$,  and, if $n$ is even, $\FF[n\VV]^{\SO(\VV,\mathsf{b})}=\FF[n\VV]^{\Ort(\VV,\mathsf{b})}$ is generated simply by the maps $\mathsf{b}( x_i,x_j)$, where
$i,j,k \in \{1,2,3\}$.  In particular,  the isomorphism $\Mor_{\cT_{\wh\Gamma}}([n],[n])\cong \Mor_{\cT_{\wh\Gamma}}([2n],[0])$
shows that the $3$-tangles can be folded to give 2-row Brauer algebra diagrams and 
reflects the fact that for $n =0,1,\dots$, the centralizer algebra $\End_{\SO(\VV,\mathsf{b})}(\VV^{\otimes n})=\End_{\Ort(\VV,\mathsf{b})}(\VV^{\otimes n})$ is a homomorphic image of the Brauer algebra $\mathsf{B}_n(3)$. (Further details on Brauer algebras can be found in \cite{Bra37}.)     
\end{remark}

%-------------------------
%
%   superdimension 1,2
%
%-------------------------   

\section{A $(1\,|\, 2)$-dimensional cross product}\label{se:dim1_2}

The $3$-dimensional \emph{Kaplansky superalgebra}  
over a field $\FF$ of characteristic $\ne 2$ is a Jordan superalgebra $\VV$ with one-dimensional even part $\VV\subo=\FF e$ and two-dimensional odd part $\VV\subuno =\FF p\oplus\FF q$, and with supercommutative multiplication given by:
\begin{equation}\label{eq:K3}
e\times e=e,\ e\times x=x\times e=\frac{1}{2}x\ \forall x\in \VV\subuno,\
p\times p=q\times q=0,\ p\times q=-q\times p=e.
\end{equation}
For any two homogeneous elements $x,y\in \VV$, the supercommutativity of the product gives $x\times y=(-1)^{xy}y\times x$, where $(-1)^{xy}$ is $-1$ if $x$ and $y$ are both odd, and it is $1$ otherwise.

Consider the even nondegenerate supersymmetric bilinear form \\ $\mathsf{b}:\VV\times \VV\rightarrow \FF$ such that
\[
\mathsf{b}( e,e)=\frac{1}{2},\quad \mathsf{b}( p,q)=1.
\]
The evenness of $\mathsf{b}$ means $\mathsf{b}( \VV\subo,\VV\subuno)=0=\mathsf{b}( \VV\subuno,\VV\subo)$, and the supersymmetry means that $\mathsf{b}$ is symmetric on $\VV\subo$ and skew symmetric on $\VV\subuno$. Hence, equation \eqref{eq:K3} may be rewritten as
\begin{equation}\label{eq:K3bis}
e\times e=e,\quad e\times x=x\times e=\frac{1}{2}x,\quad x\times y=\mathsf{b}( x,y)e,
\end{equation}
for all $x,y\in \VV\subuno$.

\begin{lemma}\label{le:K3} Let $\VV$ be the $3$-dimensional Kaplansky superalgebra.  Assume $\{x_1,x_2,x_3\}$ and $\{y_1,y_2,y_3\}$ are dual bases of $\VV$ relative to $\mathsf{b}$ (i.e. $\mathsf{b}( x_i,y_j)=\delta_{i,j}$)  
consisting of homogeneous elements.
\begin{romanenumerate}
\item The form $\mathsf{b}$ is associative.

\item $\sum_{i=1}^3 \mathsf{b}( y_i,x_i)=-1$ and $\sum_{i=1}^3 y_i\times x_i=0$.

\item For any homogeneous elements $z_1,z_2,z_3,z_4\in \VV$,
\begin{multline*}
\mathsf{b}( z_1\times z_2,z_3\times z_4)=\mathsf{b}( z_1,z_2)\mathsf{b}( z_3,z_4)\\
+(-1)^{z_2z_3}\frac{1}{2}\mathsf{b}( z_1,z_3)\mathsf{b}( z_2,z_4)+\frac{1}{2}\mathsf{b}( z_1,z_4)\mathsf{b}( z_2,z_3).
\end{multline*}
(Note that $\mathsf{b}( z_1,z_4)\mathsf{b}( z_2,z_3)=(-1)^{z_4z_2}(-1)^{z_4z_3}\mathsf{b}( z_1,z_4)\mathsf{b}( z_2,z_3)$, because $\mathsf{b}( z_2,z_3)=0$ unless $z_2$ and $z_3$ have the same parity.)
\end{romanenumerate}
\end{lemma}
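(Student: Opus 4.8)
The plan is to prove all three items by reducing, via multilinearity, to homogeneous elements and then using the explicit multiplication table \eqref{eq:K3bis} together with the smallness of $\VV$ (one even and two odd dimensions).

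For (i), associativity of $\mathsf{b}$ means $\mathsf{b}(x\times y,z)=\mathsf{b}(x,y\times z)$ for homogeneous $x,y,z$, and since both sides are trilinear it suffices to check this on the basis $\{e,p,q\}$, organized by parities. If all three factors are even both sides equal $\frac12$. If exactly one is odd, both sides vanish, since the product of the two even factors is a multiple of $e$, which is $\mathsf{b}$-orthogonal to $\VV\subuno$, and $e\times(\text{odd})$ is odd. If exactly two are odd, then in each of the three sub-patterns one checks directly, using $x\times y=\mathsf{b}(x,y)e$ for odd $x,y$ and $e\times x=\frac12 x$, that the two sides coincide. Finally, if all three are odd, both sides are $0$ because $x\times y\in\VV\subo$ is orthogonal to $\VV\subuno$. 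This is a short finite verification.

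For (ii), I would first note that the element $\sum_i y_i\otimes x_i$ is independent of the choice of homogeneous dual bases: if $x_i'=\sum_j A_{ij}x_j$, the duality $\mathsf{b}(x_i',y_k')=\delta_{ik}$ forces $y_i'=\sum_j B_{ij}y_j$ with $B=(A^t)^{-1}$, whence a direct computation (the same one as in Proposition \ref{pr:tPhiPsi}(i), which uses no super conventions since the change-of-basis matrices have scalar entries) gives $\sum_i y_i'\otimes x_i'=\sum_i y_i\otimes x_i$. Consequently both $\sum_i\mathsf{b}(y_i,x_i)$ and $\sum_i y_i\times x_i$ (the image of $\sum_i y_i\otimes x_i$ under the multiplication) are independent of the choice, so it suffices to evaluate them for the convenient choice $(x_1,x_2,x_3)=(e,p,q)$, whose dual basis is forced by $\mathsf{b}(e,e)=\frac12$ and $\mathsf{b}(p,q)=1$ to be $(y_1,y_2,y_3)=(2e,q,-p)$. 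Then $\sum_i\mathsf{b}(y_i,x_i)=\mathsf{b}(2e,e)+\mathsf{b}(q,p)+\mathsf{b}(-p,q)=1-1-1=-1$, and $\sum_i y_i\times x_i=2(e\times e)+q\times p-p\times q=2e-e-e=0$.

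For (iii), by multilinearity I reduce to homogeneous $z_1,\dots,z_4$. Evenness of $\mathsf{b}$ makes both sides vanish unless an even number of the $z_i$ are odd, leaving three cases. If all $z_i$ are even the identity is the numerical check $\frac14+\frac18+\frac18=\frac12$. If exactly two are even (hence scalar multiples of $e$), then in each of the six patterns the vanishing $\mathsf{b}(e,\VV\subuno)=0$ kills all but one term on the right, which is matched against $z_1\times z_2$ and $z_3\times z_4$ computed from \eqref{eq:K3bis} — a handful of one-line checks. The only substantive case is all $z_i$ odd: there $z_1\times z_2=\mathsf{b}(z_1,z_2)e$ and $z_3\times z_4=\mathsf{b}(z_3,z_4)e$, so the left side is $\frac12\mathsf{b}(z_1,z_2)\mathsf{b}(z_3,z_4)$, while the right side, with $(-1)^{z_2z_3}=-1$, is $\mathsf{b}(z_1,z_2)\mathsf{b}(z_3,z_4)-\frac12\mathsf{b}(z_1,z_3)\mathsf{b}(z_2,z_4)+\frac12\mathsf{b}(z_1,z_4)\mathsf{b}(z_2,z_3)$; the two agree precisely by the Grassmann--Pl\"ucker relation $\mathsf{b}(z_1,z_2)\mathsf{b}(z_3,z_4)-\mathsf{b}(z_1,z_3)\mathsf{b}(z_2,z_4)+\mathsf{b}(z_1,z_4)\mathsf{b}(z_2,z_3)=0$, which holds for every alternating bilinear form on the $2$-dimensional space $\VV\subuno$. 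The step requiring the most care is the sign bookkeeping in the super setting — tracking the factor $(-1)^{z_2z_3}$ and deciding when supersymmetry of $\mathsf{b}$ forces a term to vanish — but no genuinely difficult argument is involved.
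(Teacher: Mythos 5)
Your proof is correct and follows essentially the same route as the paper: the paper's own proof consists of observing that the expressions in (ii) are independent of the choice of homogeneous dual bases, taking the explicit pair $(e,p,q)$, $(2e,q,-p)$, and leaving the remaining verifications as "straightforward computations," which you have simply written out (including the pleasant observation that the all-odd case of (iii) is the three-term Pl\"ucker identity for an alternating form on the $2$-dimensional odd part). No gaps.
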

\begin{proof}
The expressions $\sum_{i=1}^3\mathsf{b}( y_i,x_i)$ and $\sum_{i=1}^3y_i\times x_i$ do not depend on the dual bases chosen, so we may take $x_1=e=\frac{1}{2}y_1$, $x_2=p=-y_3$, $x_3=q=y_2$. The result now follows by straightforward computations.
\end{proof}

Consider the triple $\frV=(\VV,\mathsf{b},\times)$, but now denote by $\tau$ the super-switch map: $\tau: \VV^{\otimes 2}\rightarrow \VV^{\otimes 2}$, $x\otimes y\mapsto (-1)^{xy}y\otimes x$, for any homogeneous elements $x,y\in \VV$. As in Theorem \ref{th:RV}, there is a unique functor $\cR_{\frV}:\cT\rightarrow \cV$ with the same properties expressed there (but taking into account that now $\tau$ is the super-switch map).

Consider also the following linear maps (where $\mm(x\otimes y)=x\times y$):  
\[
\begin{split}
&\mathsf{c_0^s}\in\Hom_\FF(\VV^{\otimes 0},\VV^{\otimes 0}):\ 1\mapsto (\mathsf{b}\circ \mathsf{b}^t)(1)+1,\\
&\mathsf{c_1^s}\in\Hom_\FF(\VV^{\otimes 2},\VV):\ x\otimes y\mapsto (1-\tau)\circ \mm(x\otimes y)=x\times y-(-1)^{xy}y\times x,\\
&\mathsf{c_2^s}\in\Hom_\FF(\VV^{\otimes 3},\VV):\ x\otimes y\otimes z\mapsto (x\times y)\times z 
     - \mathsf{b}( x,y)z\\[-2pt]
     &\hspace*{2.7in}+(-1)^{yz}\frac{1}{2}\mathsf{b}( x,z)y+\frac{1}{2}\mathsf{b}( y,z)x,\\
&\mathsf{c_3^s}\in\Hom_\FF(\FF,\VV):\ 1\mapsto (\mm \circ \mathsf{b}^t)(1),
\end{split}
\]
for homogeneous elements $x,y,z\in \VV$. The supercommutativity of $\VV$ shows that $\mathsf{c_1^s}=0$, and Lemma \ref{le:K3} and the nondegeneracy of $\mathsf{b}$ prove that $\mathsf{c_0^s}=0$, $\mathsf{c_2^s}=0$, and $\mathsf{c_3^s}=0$.

The linear maps $\mathsf{c_0^s}$, $\mathsf{c_1^s}$, $\mathsf{c_2^s}$, and $\mathsf{c_3^s}$, are the images under $\cR_\frV$ of:
\begin{equation}\label{eq:gammas}
\begin{split}
\gamma_0^{\mathsf s}&=\ 
\tik{0.5}{%
\draw[thick] (0,0) circle (0.5cm);
}
+1
\\[10pt]
\gamma_1^{\mathsf s}&=\ 
\tik{0.5}{%
\path (0,0) coordinate (origin);
\path (150:0.5cm) coordinate (t1);
\path (30:0.5cm) coordinate (t2);
\path (-90: 0.5cm) coordinate (t3);
\path (150:1cm) coordinate (e1);
\path (30:1cm) coordinate (e2);
\path (-90:1cm) coordinate (e3);
\path (0,0.2) coordinate (m);
\draw[thick] (e1) -- (origin) -- (e2);
\draw[thick] (origin)-- (e3);
}
\ -\ 
\tik{0.5}{%
\path (0,0) coordinate (origin);
\path (150:0.5cm) coordinate (t1);
\path (30:0.5cm) coordinate (t2);
\path (-90: 0.5cm) coordinate (t3);
\path (150:1cm) coordinate (e1);
\path (30:1cm) coordinate (e2);
\path (-90:1cm) coordinate (e3);
\path (0,0.2) coordinate (m);
\path (0,-0.5) coordinate (p);
\draw[thick] (e2) .. controls (t1) and (-1,0) .. (p);
\draw[cross] (e1) .. controls (t2) and (1,0) .. (p);
\draw[thick] (e3) -- (p);
}
\\[10pt]
\tilde\gamma_2^{\mathsf s}&=\ 
\tik{0.5}{%
\path (1,0) coordinate (a1);
\path (2,0) coordinate (a2);
\path (3,0) coordinate (a3);
\path (2,-2) coordinate (b1);
\path (1.5,-0.5) coordinate (m1);
\path (2,-1.2) coordinate (m2);
\draw[thick] (a1) -- (m1) (a2) -- (m1) (a3)-- (m2) (m1) -- (m2) (m2) -- (b1);
}
\ -\ 
\tik{0.5}{%
\path (1,0) coordinate (a1);
\path (2,0) coordinate (a2);
\path (3,0) coordinate (a3);
\path (2,-2) coordinate (b1);
\draw[thick] (a3) -- (b1);
\draw[thick] (a1) to [bend right=60] (a2);
}
\ -\frac{1}{2}
\tik{0.5}{%
\path (1,0) coordinate (a1);
\path (2,0) coordinate (a2);
\path (3,0) coordinate (a3);
\path (2,-2) coordinate (b1);
\draw[thick] (a2) -- (b1);
\draw[cross] (a1) .. controls (1,-1) and (3,-1) .. (a3);
}
\ -\frac{1}{2}\ 
\tik{0.5}{%
\path (1,0) coordinate (a1);
\path (2,0) coordinate (a2);
\path (3,0) coordinate (a3);
\path (2,-2) coordinate (b1);
\draw[thick] (a1) -- (b1);
\draw[thick] (a2) to [bend right=60] (a3);
}
\\[10pt]
\gamma_3^{\mathsf s}&=\ 
\tik{0.5}{%
\draw[thick] (0,0) -- (0,-1);
\draw[thick] (0,0.5) circle (0.5cm);
}.
\end{split}
\end{equation}  

Instead of $\tilde\gamma_2^{\mathsf s}$, let us consider the element $\gamma_2^{\mathsf s}\in\Mor_{\cT_2}([2],[2])$ given by $\gamma_2^{\mathsf s}=(\tilde\gamma_2^{\mathsf s}\sqcup  \II_1)\circ (\II_2\sqcup \beta^t)$:
\begin{equation}\label{eq:whgamma2s}
\gamma_2^{\mathsf s}=\ 
\tik{0.5}{%
\path (1,0) coordinate (a1);
\path (3,0) coordinate (a2);
\path (1,-2) coordinate (b1);
\path (3,-2) coordinate (b2);
\draw[thick] (a1) -- (2,-0.6) -- (a2);
\draw[thick] (b1) -- (2,-1.4) -- (b2);
\draw[thick] (2,-0.6) -- (2,-1.4);
}
\ -\ 
\tik{0.5}{%
\path (1,0) coordinate (a1);
\path (3,0) coordinate (a2);
\path (1,-2) coordinate (b1);
\path (3,-2) coordinate (b2);
\draw[thick] (a1) to [bend right=60]  (a2);
\draw[thick] (b1) to [bend left=60] (b2);
}
\ -\frac{1}{2}\ 
\tik{0.5}{%
\path (1,0) coordinate (a1);
\path (3,0) coordinate (a2);
\path (1,-2) coordinate (b1);
\path (3,-2) coordinate (b2);
\draw[thick] (a2) -- (b1);
\draw[cross] (a1) -- (b2);
}
\ -\frac{1}{2}\ 
\tik{0.5}{%
\path (1,0) coordinate (a1);
\path (3,0) coordinate (a2);
\path (1,-2) coordinate (b1);
\path (3,-2) coordinate (b2);
\draw[thick] (a1) to [bend left=60] (b1);
\draw[thick] (a2) to [bend right=60] (b2);
}
\end{equation}

As in Corollary \ref{co:RGamma}, if 
 $\Gamma^{\mathsf s}=\{\gamma_0^{\mathsf s},\gamma_1^{\mathsf s},\gamma_2^{\mathsf s},\gamma_3^{\mathsf s}\}$, there is a unique functor $\cR_{\Gamma^{\mathsf s}}:\cT_{\Gamma^{\mathsf s}}\rightarrow \cV$ such that $\cR_\cV=\cR_{\Gamma^{\mathsf s}}\circ\cP$, with $\cP$ the natural projection $\cT \rightarrow \cT_{\Gamma^{\mathsf s}}$.

The Lie superalgebra of derivations of $(\VV,\times)$ is the orthosymplectic superalgebra $\frosp(\VV,\mathsf{b})\simeq \frosp_{1|2}$ over
$\FF$, and hence,  $\cR_{\Gamma^{\mathsf s}}\bigl(\Mor_{\cT_{\Gamma^{\mathsf s}}}([n],[m])\bigr)$ is contained in $\Hom_{\frosp(\VV,\mathsf{b})}(\VV^{\otimes n},\VV^{\otimes m})$.

By the arguments of Section \ref{se:3_dim},  $\gamma_0^{\mathsf s}$ allows us to get rid of circles and $\gamma_2^{\mathsf s}$ of crossings. Also, $\gamma_2^{\mathsf s}$ gives:
\[
\begin{split}
\tik{0.5}{%
\path (1,0) coordinate (a1);
\path (2,0) coordinate (a2);
\path (3,0) coordinate (a3);
\path (1,-2) coordinate (b1);
\path (2,-2) coordinate (b2);
\path (3,-2) coordinate (b3);
\draw[thick] (a2) -- (2,-0.5) (2,-1.5) -- (b2);
\draw[thick] (2,-1) circle (0.5cm);
}
\ &=\ 
\tik{0.5}{%
\path (1,0) coordinate (a1);
\path (2,0) coordinate (a2);
\path (3,0) coordinate (a3);
\path (1,-2) coordinate (b1);
\path (2,-2) coordinate (b2);
\path (3,-2) coordinate (b3);
\draw[thick] (a3) -- (2,-0.5) -- (2,-1.5) -- (b3);
\draw[thick] (2,-0.5) .. controls (a1) and (b1) .. (2,-1.5);
}
\ =\ 
\tik{0.5}{%
\path (1,0) coordinate (a1);
\path (2,0) coordinate (a2);
\path (3,0) coordinate (a3);
\path (1,-2) coordinate (b1);
\path (2,-2) coordinate (b2);
\path (3,-2) coordinate (b3);
\draw[thick] (a3) -- (2,-0.5) (2,-1.5) -- (b3);
\draw[thick] (2,-0.5) .. controls (a1) and (b1) .. (2,-1.5);
}
\ +\ \frac{1}{2}
\tik{0.5}{%
\path (1,0) coordinate (a1);
\path (2,0) coordinate (a2);
\path (3,0) coordinate (a3);
\path (1,-2) coordinate (b1);
\path (2,-2) coordinate (b2);
\path (3,-2) coordinate (b3);
\draw[thick] (1.5,-0.3) .. controls (a1) and (b1) .. (1.5,-1.7);
\draw[thick] (a3) -- (1.5,-1.7);
\draw[cross] (1.5,-0.3) -- (b3);
}
\ +\frac{1}{2}\ 
\tik{0.5}{%
\draw[thick] (0,0) circle (0.5cm);
\draw[thick] (0.8,1) -- (0.8,-1);
}
\\[10pt]
&=\ \left(1+\frac{1}{2}-\frac{1}{2}\right)\ 
\tik{0.5}{%
\draw[thick] (0,1) -- (0,-1);
}
\ =\ 
\tik{0.5}{%
\draw[thick] (0,1) -- (0,-1);
}
\end{split}
\]

Now from $\gamma_2^{\mathsf s}$ and the $3$-tangle obtained rotating it,  we get the following relation in $\cT_{\Gamma^{\mathsf s}}$:
\begin{equation}\label{eq:assoc3}
\tik{0.5}{%
\path (1,0) coordinate (a1);
\path (3,0) coordinate (a2);
\path (1,-2) coordinate (b1);
\path (3,-2) coordinate (b2);
\draw[thick] (a1) -- (2,-0.6) -- (a2);
\draw[thick] (b1) -- (2,-1.4) -- (b2);
\draw[thick] (2,-0.6) -- (2,-1.4);
}
\ -\ 
\tik{0.5}{%
\path (1,0) coordinate (a1);
\path (3,0) coordinate (a2);
\path (1,-2) coordinate (b1);
\path (3,-2) coordinate (b2);
\draw[thick] (a1) -- (1.6,-1) -- (b1);
\draw[thick] (a2) -- (2.4,-1) -- (b2);
\draw[thick] (1.6,-1) -- (2.4,-1);
}\ =\ \frac{1}{2}\left[\ 
\tik{0.5}{%
\path (1,0) coordinate (a1);
\path (3,0) coordinate (a2);
\path (1,-2) coordinate (b1);
\path (3,-2) coordinate (b2);
\draw[thick] (a1) to [bend right=60]  (a2);
\draw[thick] (b1) to [bend left=60] (b2);
}
\ -\ \vrule width 0pt depth 20pt
\tik{0.5}{%
\path (1,0) coordinate (a1);
\path (3,0) coordinate (a2);
\path (1,-2) coordinate (b1);
\path (3,-2) coordinate (b2);
\draw[thick] (a1) to [bend left=60] (b1);
\draw[thick] (a2) to [bend right=60] (b2);
}
\,\right]
\end{equation}
(compare with \eqref{eq:assoc1}), and this allows us, as in Section \ref{se:3_dim}, to replace cycles by linear combinations of $3$-tangles without crossings, circles and cycles, and eventually to express any $3$-tangle in $\cT_{\Gamma^{\mathsf s}}$ as a linear combination of $3$-tangles which are disjoint unions of normalized connected $3$-tangles.

\begin{theorem}\label{th:Kap}
Let $\VV$ be the $3$-dimensional Kaplansky superalgebra  over a field $\FF$ of characteristic $0$, and assume $\mathsf{b}$ is the 
nondegenerate supersymmetric bilinear form on $\VV$. Let $n,m\in\NN$. Assume $\Gamma^{\mathsf s} =
\{\gamma_0^{\mathsf s},\gamma_1^{\mathsf s},\gamma_2^{\mathsf s},\gamma_3^{\mathsf s}\}$, where these $3$-tangles are as in \eqref{eq:gammas} and \eqref{eq:whgamma2s}.
\begin{itemize}
\item[{\rm (a)}]  The classes modulo $\Gamma^{\mathsf s}$ of the normalized $3$-tangles $[n]\rightarrow [m]$ form a basis of $\Mor_{\cT_{\Gamma^{\mathsf s}}}([n],[m])$.
\item[{\rm (b)}]  The functor $\cR_{\Gamma^{\mathsf s}}$ gives a linear isomorphism 
\[
\Mor_{\cT_{\Gamma^{\mathsf s}}}([n],[m])\rightarrow \Hom_{\frosp(\VV,\mathsf{b})}(\VV^{\otimes n},\VV^{\otimes m}).
\]
\item[{\rm (c)}] The normalized $3$-tangles  
$[n]\rightarrow [n]$ \ give a basis of the centralizer algebra $$\End_{\frosp(\VV,\mathsf{b})}\bigl(\VV^{\otimes n}\bigr)\simeq \Mor_{\cT_{\Gamma^{\mathsf s}}}([n],[n]),$$ and $\dimm  \End_{\frosp(\VV,\mathsf{b})}\bigl(\VV^{\otimes n}\bigr)$ equals the number $a(2n)$ of Catalan partitions.  \end{itemize}
\end{theorem}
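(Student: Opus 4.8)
The plan is to follow the pattern of the proofs of Theorems \ref{th:G2bis} and \ref{th:sl2}. The arguments given just before the statement already carry out the ``span'' half of the work: $\gamma_0^{\mathsf s}$ lets us remove circles, $\gamma_2^{\mathsf s}$ lets us remove crossings, the relation \eqref{eq:assoc3} obtained from $\gamma_2^{\mathsf s}$ and its rotation lets us replace cycles by $3$-tangles without crossings, circles, or cycles, and the ensuing variation of associativity rewrites each resulting tree component as a normalized connected $3$-tangle. Hence the classes modulo $\Gamma^{\mathsf s}$ of normalized $3$-tangles $[n]\rightarrow[m]$ span $\Mor_{\cT_{\Gamma^{\mathsf s}}}([n],[m])$, and there are exactly $a(n+m)$ of them, one for each Catalan partition of $n+m$. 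It therefore suffices to show that $\cR_{\Gamma^{\mathsf s}}$ restricts to a \emph{surjection} $\Mor_{\cT_{\Gamma^{\mathsf s}}}([n],[m])\rightarrow\Hom_{\frosp(\VV,\mathsf{b})}(\VV^{\otimes n},\VV^{\otimes m})$ onto a space of dimension $a(n+m)$: then the $\le a(n+m)$-element spanning set is forced to be a basis, which is (a); the surjection is then forced to be injective, which is (b); and (c) is the case $m=n$, using $\End_{\frosp(\VV,\mathsf{b})}(\VV^{\otimes n})\cong\Hom_{\frosp(\VV,\mathsf{b})}(\VV^{\otimes 2n},\FF)$ from Proposition \ref{pr:tPhiPsi} together with the count $a(2n)$ of normalized $3$-tangles $[n]\rightarrow[n]$.

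For the surjectivity, the inclusion $\cR_{\Gamma^{\mathsf s}}\bigl(\Mor_{\cT_{\Gamma^{\mathsf s}}}([n],[m])\bigr)\subseteq\Hom_{\frosp(\VV,\mathsf{b})}(\VV^{\otimes n},\VV^{\otimes m})$ is already recorded above the statement. For the reverse inclusion I would invoke the First Fundamental Theorem of Invariant Theory for the orthosymplectic Lie superalgebra (\cite{Ser92}, \cite{LZ15}): since $\frosp(\VV,\mathsf{b})=\Der(\VV,\times)$ is the \emph{full} orthosymplectic superalgebra of $\mathsf{b}$ and $\VV$ is its natural $(1\,|\,2)$-dimensional module, the space $\Hom_{\frosp(\VV,\mathsf{b})}(\VV^{\otimes n},\VV^{\otimes m})\cong\Hom_{\frosp(\VV,\mathsf{b})}(\VV^{\otimes(n+m)},\FF)$ is spanned by the multilinear maps obtained by composing and tensoring $1_\VV$, $\mathsf{b}$, $\mathsf{b}^t$, $\tau$, and the cross product $\times$; here $(x,y,z)\mapsto\mathsf{b}(x\times y,z)$ is the super-Pfaffian/Berezinian-type invariant, playing exactly the role that $\mathsf{b}(x\times y,z)=\det(x,y,z)$ played in the $3$-dimensional $\SO$-case preceding Theorem \ref{th:sl2}. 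All such maps lie in $\cR_{\Gamma^{\mathsf s}}\bigl(\Mor_{\cT_{\Gamma^{\mathsf s}}}([n],[m])\bigr)$, so the surjection follows.

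It remains to check that $\dim\Hom_{\frosp(\VV,\mathsf{b})}(\VV^{\otimes\ell},\FF)=a(\ell)$ for every $\ell\ge 0$. As in the proof of Theorem \ref{th:sl2}, I would extend scalars so that $\FF$ is algebraically closed; then $\frosp(\VV,\mathsf{b})\cong\frosp_{1|2}$, $\VV$ is the irreducible natural module $L(\tfrac12)$, and the dimension in question is the multiplicity of the trivial module $L(0)$ in $\VV^{\otimes\ell}$. Using the Clebsch--Gordan rule for $\frosp_{1|2}$, namely $L(i)\otimes L(j)\cong\bigoplus_{k} L(k)$ with $k$ running over $|i-j|,\,|i-j|+\tfrac12,\,\dots,\,i+j$, this multiplicity equals the number of walks of length $\ell$ on $\{0,\tfrac12,1,\tfrac32,\dots\}$ from $0$ to $0$ with steps in $\{0,\pm\tfrac12\}$, where at $0$ the only available step is $+\tfrac12$. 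Relabelling $j\leftrightarrow 2j$, this is precisely the walk-count governing the multiplicity of the trivial module in the $\ell$-th tensor power of the adjoint module of $\frsl_2$, hence it equals $a(\ell)$ (cf. \eqref{eq:Catalan}), in agreement with Theorem \ref{th:sl2}. Assembling the three parts finishes the proof.

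The step I expect to be the main obstacle is this last one: pinning down $\dim\Hom_{\frosp_{1|2}}(\VV^{\otimes\ell},\FF)$. Because the orthogonal part of $\frosp_{1|2}$ is odd-dimensional, this space is nonzero in odd degrees too, so the naive ``Brauer-diagram'' count (which is accurate for $\frosp$ with even-dimensional orthogonal part) is not directly applicable, and one genuinely needs the $\frosp_{1|2}$ tensor-product decomposition above — or, alternatively, a careful combination of the super FFT and SFT of \cite{LZ15} with the Catalan-partition enumeration. A secondary point to be careful about is that the FFT citation must be the one valid for $\mathsf{SOSp}(\VV,\mathsf{b})$ (equivalently, for the Lie superalgebra $\frosp(\VV,\mathsf{b})$), so that the cross-product invariant $\mathsf{b}(x\times y,z)$ is genuinely included among the generators and surjectivity of $\cR_{\Gamma^{\mathsf s}}$ really does hold.
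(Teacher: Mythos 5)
Your proposal is correct and follows the same architecture as the paper's proof: the spanning set of normalized $3$-tangles is already established in the discussion preceding the theorem, surjectivity of $\cR_{\Gamma^{\mathsf s}}$ comes from the First Fundamental Theorem for $\mathsf{SOSp}(\VV,\mathsf{b})$ in \cite{LZ15}, and the count $\dim\Hom_{\frosp(\VV,\mathsf{b})}(\VV^{\otimes \ell},\FF)=a(\ell)$ closes the sandwich. The one place you diverge is the dimension count: the paper simply cites Rittenberg--Scheunert \cite{RS82} for the coincidence of $\frosp_{1|2}$-tensor multiplicities with those of the adjoint module of $\frsl_2\simeq\frso_3$ and then falls back on Theorem \ref{th:sl2}, whereas you rederive that coincidence by hand from the $\frosp_{1|2}$ Clebsch--Gordan rule and a walk-counting bijection; your version is more self-contained (and correct, since the finite-dimensional representation category of $\frosp_{1|2}$ is semisimple), at the cost of reproving what the citation already gives. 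One small thing you assert but do not verify is that $\mathsf{b}(u\times u,u)$ really is the super Pfaffian, which is needed so that the cross product accounts for all FFT generators; the paper devotes an explicit computation ($\mathsf{b}(u\times u,u)^2=\tfrac14(\alpha^6+12\alpha^4\beta\gamma)$ versus $\mathsf{b}(u,u)^3=\tfrac18(\alpha^6+12\alpha^4\beta\gamma)$ for a generic even element $u=\alpha e+\beta p+\gamma q$ over a supercommutative extension) to pin this down, and you should include some such check.
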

\begin{proof} Extending scalars, we may assume that $\FF$ is algebraically closed.

Given any unital commutative associative superalgebra $R=R\subo\oplus R\subuno$ over $\FF$, let $\alpha\in R\subo$, $\beta,\gamma\in R\subuno$, and consider the even element $u=\alpha e+\beta p+\gamma q$ in the scalar extension $\VV\otimes_\FF R$. Then $\mathsf{b}( u,u)=\frac{1}{2}\alpha^2+2\beta\gamma$, so
\[
\mathsf{b}( u,u)^3=\frac{1}{8}\Bigl(\alpha^6+12\alpha^4\beta\gamma\Bigr)
\]
because $\beta^2=\gamma^2=0$, and
\[
\begin{split}
\mathsf{b}( u\times u,u)&=\mathsf{b}\bigl((\alpha^2+2\beta\gamma)e+\alpha\beta p+\alpha\gamma q,\alpha e+\beta p+\gamma q\bigr)\\
&=\frac{1}{2}(\alpha^2+2\beta\gamma)\alpha+2\alpha\beta\gamma=\frac{1}{2}(\alpha^3+6\alpha\beta\gamma),
\end{split}
\]
and hence we obtain
\[
\mathsf{b}( u\times u,u)^2=\frac{1}{4}(\alpha^6+12\alpha^4\beta\gamma).
\]
This means that, up to scalars, $\mathsf{b}( u\times u,u)$ is the \emph{super Pfaffian} (see \cite{Ser92}). The First Fundamental Theorem of Invariant Theory for the special orthosymplectic group $\mathsf{SOSp}(\VV,\mathsf{b})$ \cite{LZ15} shows that $\cR_{\Gamma^{\mathsf s}}$ gives a surjection
\[
\Mor_{\cT_{\Gamma^{\mathsf s}}}([n],[m])\rightarrow \Hom_{\mathsf{SOSp}(\VV,\mathsf{b})}(\VV^{\otimes n},\VV^{\otimes m})=\Hom_{\frosp(\VV,\mathsf{b})}(\VV^{\otimes n},\VV^{\otimes m}).
\]
Now  it follows from \cite{RS82} that the dimension of $\Hom_{\frosp(\VV,\mathsf{b})}(\VV^{\otimes n},\VV^{\otimes m})$ coincides with the dimension of $\Hom_{\frsl_2}(\frsl_2^{\otimes n},\frsl_2^{\otimes m})$ for the adjoint module for $\frsl_2\simeq \frso_3$, and this dimension equals the number of Catalan partitions of $n+m$, that is,  the number of normalized $3$-tangles $[n]\rightarrow [m]$ (see the comments before Theorem \ref{th:sl2}). Hence, the result holds.
\end{proof}
 
\begin{example}  Let us consider explicitly the basis of the centralizer algebra $\End_{\frosp(\VV,\mathsf{b})}(\VV^{\otimes 3})$ or, equivalently, of $\Mor_{\cT_{\Gamma^{\mathsf s}}}([3],[3])$. As in Remark \ref{re:det},  this is a homomorphic image of  the Brauer algebra  $\mathsf{B}_3(-1)$
 due to tangle relation $\gamma_0^{\mathsf s}$ (in fact, it is isomorphic to $\mathsf{B}_3(-1)$), but here a basis different from the
standard diagram basis of the Brauer algebra is obtained. According to Theorem \ref{th:Kap}, a basis of $\Mor_{\cT_{\Gamma^{\mathsf s}}}([3],[3])$ is given by the classes modulo $\Gamma^{\mathsf s}$ of the following $3$-tangles:

\[
\tik{0.5}{%
\draw[very thin] (0.5,0) to (3.5,0);
\draw[very thin] (0.5,-2) to (3.5,-2);
\path (1,0) coordinate (a1);
\path (2,0) coordinate (a2);
\path (3,0) coordinate (a3);
\path (1,-2) coordinate (b1);
\path (2,-2) coordinate (b2);
\path (3,-2) coordinate (b3);
\draw[thick] (a1) -- (b1) (a2) -- (b2) (a3) -- (b3);
}
\qquad
\tik{0.5}{%
\draw[very thin] (0.5,0) to (3.5,0);
\draw[very thin] (0.5,-2) to (3.5,-2);
\path (1,0) coordinate (a1);
\path (2,0) coordinate (a2);
\path (3,0) coordinate (a3);
\path (1,-2) coordinate (b1);
\path (2,-2) coordinate (b2);
\path (3,-2) coordinate (b3);
\draw[thick] (a1) -- (b1);
\draw[thick]  (a2) to [bend right=60] (a3);
\draw[thick] (b2) to [bend left=60]   (b3);
}
\qquad
\tik{0.5}{%
\draw[very thin] (0.5,0) to (3.5,0);
\draw[very thin] (0.5,-2) to (3.5,-2);
\path (1,0) coordinate (a1);
\path (2,0) coordinate (a2);
\path (3,0) coordinate (a3);
\path (1,-2) coordinate (b1);
\path (2,-2) coordinate (b2);
\path (3,-2) coordinate (b3);
\draw[thick] (a3) -- (b3);
\draw[thick]  (a1) to [bend right=60] (a2);
\draw[thick] (b1) to [bend left=60]   (b2);
}
\qquad
\tik{0.5}{%
\draw[very thin] (0.5,0) to (3.5,0);
\draw[very thin] (0.5,-2) to (3.5,-2);
\path (1,0) coordinate (a1);
\path (2,0) coordinate (a2);
\path (3,0) coordinate (a3);
\path (1,-2) coordinate (b1);
\path (2,-2) coordinate (b2);
\path (3,-2) coordinate (b3);
\draw[thick] (a1) -- (b3);
\draw[thick]  (a2) to [bend right=60] (a3);
\draw[thick] (b1) to [bend left=60]   (b2);
}
\qquad
\tik{0.5}{%
\draw[very thin] (0.5,0) to (3.5,0);
\draw[very thin] (0.5,-2) to (3.5,-2);
\path (1,0) coordinate (a1);
\path (2,0) coordinate (a2);
\path (3,0) coordinate (a3);
\path (1,-2) coordinate (b1);
\path (2,-2) coordinate (b2);
\path (3,-2) coordinate (b3);
\draw[thick] (a3) -- (b1);
\draw[thick]  (a1) to [bend right=60] (a2);
\draw[thick] (b2) to [bend left=60]   (b3);
}
\]

\bigskip

\[
\tik{0.5}{%
\draw[very thin] (0.5,0) to (3.5,0);
\draw[very thin] (0.5,-2) to (3.5,-2);
\path (1,0) coordinate (a1);
\path (2,0) coordinate (a2);
\path (3,0) coordinate (a3);
\path (1,-2) coordinate (b1);
\path (2,-2) coordinate (b2);
\path (3,-2) coordinate (b3);
\path (2,-0.7) coordinate (m1);
\path (2,-1.3) coordinate (m2);
\draw[thick] (a1) -- (m1) (a2) -- (m1) (a3) -- (m1);
\draw[thick] (b1) -- (m2) (b2) -- (m2) (b3) -- (m2);
}
\qquad
\tik{0.5}{%
\draw[very thin] (0.5,0) to (3.5,0);
\draw[very thin] (0.5,-2) to (3.5,-2);
\path (1,0) coordinate (a1);
\path (2,0) coordinate (a2);
\path (3,0) coordinate (a3);
\path (1,-2) coordinate (b1);
\path (2,-2) coordinate (b2);
\path (3,-2) coordinate (b3);
\draw[thick] (a2) -- (b1) (a1) -- (1.5,-1);
\draw[thick] (a3) -- (b2) (b3) -- (2.5,-1);
}
\qquad
\tik{0.5}{%
\draw[very thin] (0.5,0) to (3.5,0);
\draw[very thin] (0.5,-2) to (3.5,-2);
\path (1,0) coordinate (a1);
\path (2,0) coordinate (a2);
\path (3,0) coordinate (a3);
\path (1,-2) coordinate (b1);
\path (2,-2) coordinate (b2);
\path (3,-2) coordinate (b3);
\draw[thick] (a1) -- (b2) (b1) -- (1.5,-1);
\draw[thick] (a2) -- (b3) (a3) -- (2.5,-1);
}
\]

\bigskip

\[
\tik{0.5}{%
\draw[very thin] (0.5,0) to (3.5,0);
\draw[very thin] (0.5,-2) to (3.5,-2);
\path (1,0) coordinate (a1);
\path (2,0) coordinate (a2);
\path (3,0) coordinate (a3);
\path (1,-2) coordinate (b1);
\path (2,-2) coordinate (b2);
\path (3,-2) coordinate (b3);
\draw[thick] (a1) -- (1.5,-0.5) -- (1.5,-1.5) -- (b1);
\draw[thick] (a2) -- (1.5,-0.5) (b2) -- (1.5,-1.5);
\draw[thick] (a3) -- (b3);
}
\quad
\tik{0.5}{%
\draw[very thin] (0.5,0) to (3.5,0);
\draw[very thin] (0.5,-2) to (3.5,-2);
\path (1,0) coordinate (a1);
\path (2,0) coordinate (a2);
\path (3,0) coordinate (a3);
\path (1,-2) coordinate (b1);
\path (2,-2) coordinate (b2);
\path (3,-2) coordinate (b3);
\draw[thick] (a2) -- (2.5,-0.5) -- (2.5,-1.5) -- (b2);
\draw[thick] (a3) -- (2.5,-0.5) (b3) -- (2.5,-1.5);
\draw[thick] (a1) -- (b1);
}
\quad
\tik{0.5}{%
\draw[very thin] (0.5,0) to (3.5,0);
\draw[very thin] (0.5,-2) to (3.5,-2);
\path (1,0) coordinate (a1);
\path (2,0) coordinate (a2);
\path (3,0) coordinate (a3);
\path (1,-2) coordinate (b1);
\path (2,-2) coordinate (b2);
\path (3,-2) coordinate (b3);
\draw[thick] (a1) -- (b3)  (a2) -- (1.5,-0.5) (a3) -- (2,-1);
\draw[thick] (b1) to [bend left=60] (b2);
}
\quad
\tik{0.5}{%
\draw[very thin] (0.5,0) to (3.5,0);
\draw[very thin] (0.5,-2) to (3.5,-2);
\path (1,0) coordinate (a1);
\path (2,0) coordinate (a2);
\path (3,0) coordinate (a3);
\path (1,-2) coordinate (b1);
\path (2,-2) coordinate (b2);
\path (3,-2) coordinate (b3);
\draw[thick] (a3) -- (b1)  (b2) -- (1.5,-1.5) (b3) -- (2,-1);
\draw[thick] (a1) to [bend right=60] (a2);
}
\quad
\tik{0.5}{%
\draw[very thin] (0.5,0) to (3.5,0);
\draw[very thin] (0.5,-2) to (3.5,-2);
\path (1,0) coordinate (a1);
\path (2,0) coordinate (a2);
\path (3,0) coordinate (a3);
\path (1,-2) coordinate (b1);
\path (2,-2) coordinate (b2);
\path (3,-2) coordinate (b3);
\draw[thick] (a1) -- (b3)  (b1) -- (1.5,-1.5) -- (1.5,-0.5);
\draw[thick] (b2) -- (1.5,-1.5);
\draw[thick] (a2) to [bend right=60] (a3);
}
\quad
\tik{0.5}{%
\draw[very thin] (0.5,0) to (3.5,0);
\draw[very thin] (0.5,-2) to (3.5,-2);
\path (1,0) coordinate (a1);
\path (2,0) coordinate (a2);
\path (3,0) coordinate (a3);
\path (1,-2) coordinate (b1);
\path (2,-2) coordinate (b2);
\path (3,-2) coordinate (b3);
\draw[thick] (a3) -- (b1)  (a1) -- (1.5,-0.5) -- (1.5,-1.5);
\draw[thick] (a2) -- (1.5,-0.5);
\draw[thick] (b2) to [bend left=60] (b3);
}
\]

\bigskip

\[
\tik{0.5}{%
\draw[very thin] (0.5,0) to (3.5,0);
\draw[very thin] (0.5,-2) to (3.5,-2);
\path (1,0) coordinate (a1);
\path (2,0) coordinate (a2);
\path (3,0) coordinate (a3);
\path (1,-2) coordinate (b1);
\path (2,-2) coordinate (b2);
\path (3,-2) coordinate (b3);
\draw[thick] (a1) -- (2,-0.8) -- (2,-1.2) -- (b1);
\draw[thick] (a2) -- (1.5,-0.4) (a3) -- (2,-0.8) (b2) -- (1.5,-1.6) (b3) -- (2,-1.2);
}
\]

\bigskip

It is straightforward to translate this basis of normalized $3$-tangles to a basis of the centralizer algebra $\End_{\frosp(\VV,\mathsf{b})}(\VV^{\otimes 3})$. For example, 

\[
\tik{0.5}{%
\draw[very thin] (0.5,0) to (3.5,0);
\draw[very thin] (0.5,-2) to (3.5,-2);
\path (1,0) coordinate (a1);
\path (2,0) coordinate (a2);
\path (3,0) coordinate (a3);
\path (1,-2) coordinate (b1);
\path (2,-2) coordinate (b2);
\path (3,-2) coordinate (b3);
\draw[thick] (a2) -- (2.5,-0.5) -- (2.5,-1.5) -- (b2);
\draw[thick] (a3) -- (2.5,-0.5) (b3) -- (2.5,-1.5);
\draw[thick] (a1) -- (b1);
}
\quad = \quad
\tik{0.5}{%
\draw[very thin] (0.5,1.5) to (3.5,1.5);
\draw[very thin,dotted] (0.5,0) to (5.5,0);
\draw[very thin] (0.5,-1.5) to (5.5,-1.5);
\path (1,1.5) coordinate (a1);
\path (2,1.5) coordinate (a2);
\path (3,1.5) coordinate (a3);
\path (1,-1.5) coordinate (b1);
\path (3,-1.5) coordinate (b2);
\path (5,-1.5) coordinate (b3);
\draw[thick] (a2) -- (2,0) -- (2.5,-0.5) -- (3,0) -- (a3);
\draw[thick] (2.5,-0.5) -- (3,-1) -- (b2);
\draw[thick] (3,-1) -- (4,0) (5,0) -- (b3);
\draw[thick] (4,0) to [bend left=60] (5,0);
\draw[thick] (a1) -- (b1);
}
\]

\noindent and hence, if $\{x_1,x_2,x_3\}$ and $\{y_1,y_2,y_3\}$ are two dual bases of $\VV$ consisting of homogeneous elements, then using Proposition \ref{pr:tPhiPsi}, which is valid in the super setting, we obtain that the corresponding element of the centralizer is the composition
\[
u\otimes v\otimes w\mapsto \sum_{i=1}^3 u\otimes v\otimes w\otimes y_i\otimes x_i\mapsto
\sum_{i=1}^3 u\otimes\bigl((v\times w)\times y_i\bigr)\otimes x_i.
\]

In the same vein, 

\[
\tik{0.5}{%
\draw[very thin] (0.5,0) to (3.5,0);
\draw[very thin] (0.5,-2) to (3.5,-2);
\path (1,0) coordinate (a1);
\path (2,0) coordinate (a2);
\path (3,0) coordinate (a3);
\path (1,-2) coordinate (b1);
\path (2,-2) coordinate (b2);
\path (3,-2) coordinate (b3);
\draw[thick] (a1) -- (2,-0.8) -- (2,-1.2) -- (b1);
\draw[thick] (a2) -- (1.5,-0.4) (a3) -- (2,-0.8) (b2) -- (1.5,-1.6) (b3) -- (2,-1.2);
}
\quad =\quad
\tik{0.5}{%
\draw[very thin] (0.5,1.5) to (3.5,1.5);
\draw[very thin,dotted] (0.5,0) to (7.5,0);
\draw[very thin] (0.5,-2.5) to (7.5,-2.5);
\path (1,1.5) coordinate (a1);
\path (2,1.5) coordinate (a2);
\path (3,1.5) coordinate (a3);
\path (3,-2.5) coordinate (b1);
\path (6,-2.5) coordinate (b2);
\path (7,-2.5) coordinate (b3);
\draw[thick] (a1) -- (1,0) (a2) -- (2,0) (a3) -- (3,0);
\draw[thick] (1,0) -- (3,-2) -- (b1);
\draw[thick] (2,0) -- (1.5,-0.5) (3,0) -- (2,-1) (4,0) -- (2.5,-1.5) (5,0) -- (3,-2);
\draw[thick] (6,0) -- (b2) (7,0) -- (b3);
\draw[thick] (4,0) to [bend left=60] (7,0);
\draw[thick] (5,0) to [bend left=80] (6,0);
}
\]

\noindent which corresponds to the map
\[
u\otimes v\otimes w\mapsto \sum_{i,j=1}^3\bigl((((u\times v)\times w)\times y_i)\times y_j\bigr)\otimes x_j\otimes x_i.
\]
\end{example}

\end{document}